\documentclass[11pt]{article}

\usepackage{a4wide}
\usepackage{amssymb,amsmath,amsthm, amsfonts}

\usepackage{color}

\newenvironment{keywords}{{\bf Key words. }}{\\}
\newenvironment{AMS}{{\bf AMS subject classification. }}{\\}

\newcommand{\at}[1]{}
\newcommand{\email}[1]{{\tt #1}}

\newcommand{\lin}{{\rm lin\,}}

\newcommand{\Gr}{{\rm gph\,}}

\newcommand{\co}{{\rm conv\,}}
\newcommand{\ri}{{\rm ri\,}}
\newcommand{\cl}{{\rm cl\,}}

\newcommand{\xb}{\bar x}
\newcommand{\yb}{\bar y}
\newcommand{\zb}{\bar z}

\newcommand{\lb}{\bar\lambda}
\newcommand{\yba}{\yb^\ast}
\newcommand{\Tb}{\bar{\cal T}}
\newcommand{\Cb}{\bar{\cal C}}
\newcommand{\Q}{{\cal Q}}
\newcommand{\Pa}{{\cal P}}
\newcommand{\I}{{\cal I}}
\newcommand{\Ib}{\bar\I}

\newcommand{\Kb}{\bar K}
\newcommand{\Lb}{\bar \Lambda}

\newcommand{\R}{\mathbb{R}}

\newcommand{\dist}[1]{{\rm d}(#1)}

\newcommand{\mv}{\,\vert\, }
\newcommand{\setto}[1]{\mathop{\to}\limits^#1}

\newcommand{\skalp}[1]{\langle #1\rangle}
\newcommand{\range}{{\rm Range\,}}

\newcommand{\Span}{\mathop{\rm span\,}\limits}

\newcommand{\Tlin}{T^{\rm lin}_\Omega}
\newcommand{\TD}{T_D(F(\xb))}
\newcommand{\NrD}{\widehat N_D(F(\xb))}

\newtheorem{definition}{Definition}
\newtheorem{theorem}{Theorem}
\newtheorem{lemma}{Lemma}
\newtheorem{example}{Example}
\newtheorem{corollary}{Corollary}
\newtheorem{proposition}{Proposition}
\newtheorem{remark}{Remark}
\newtheorem{assumption}{Assumption}

\begin{document}

\title{On estimating the regular normal cone to constraint systems and stationarity conditions\thanks{This is an Accepted Manuscript of an article published by Taylor \& Francis in Optimization on 31 October 2016, available online: http://www.tandfonline.com/10.1080/02331934.2016.1252915}}
\author{Mat\'u\v{s} Benko, Helmut Gfrerer\thanks{Institute of Computational Mathematics, Johannes Kepler University Linz,
              A-4040 Linz, Austria, \email{benko@numa.uni-linz.ac.at},
             \email{helmut.gfrerer@jku.at}}
}

\date{}

\maketitle
\begin{abstract}Estimating the regular normal cone to constraint systems plays an  important role for the derivation of sharp necessary optimality conditions. We present two novel approaches and introduce a new stationarity concept which is stronger than M-stationarity. We apply our theory to three classes of mathematical programs frequently arising  in the literature.
\end{abstract}
\begin{keywords}Regular normal cone; B-, M-, S-stationarity; complementarity constraints; vanishing constraints; generalized equations.
\end{keywords}
\begin{AMS} 49J53 \and  90C46.
\end{AMS}

\section{Introduction}
This paper deals with the computation of the {\em regular normal cone}  $\widehat N_\Omega(\xb)$ to sets of the form
\begin{equation}\label{EqConstrSyst}
    \Omega:=\{x\in \R^n\mv F(x)\in D\}
\end{equation}
at some point $\xb\in\Omega$, where $F:\R^n\to\R^m$ is a mapping continuously differentiable  at $\xb$ and $D\subset\R^m$ is a closed set.

This task is of particular importance for the development of first order optimality conditions of the nonlinear program
\begin{equation}\label{EqOptProbl}
  \min f(x)\quad\text{ subject to }\quad x\in\Omega
\end{equation}
since the basic optimality condition, see e.g. \cite[Theorem 6.12]{RoWe98}, states that the negative gradient of the objective at a local minimizer $\xb$ belongs to the regular normal cone to the constraints at $\xb$, i.e.
\[-\nabla f(\xb)\in\widehat N_\Omega(\xb).\]

When $D$ is convex, the computation of the regular normal cone is well understood, see e.g. \cite{BonSh00}. Under some constraint qualification condition an exact formula  reads as
\begin{equation}\label{EqNormalConeConv}
\widehat N_\Omega(\xb)=\nabla F(\xb)^T\widehat N_D(F(\xb)).
\end{equation}
Quite more complicated is the situation, when $D$ is not convex. This occurs for instance, when among the constraints so-called equilibrium constraints are present. Such programs are usually termed {\em mathematical programs with equilibrium constraints} (MPEC). The equilibrium can be often described by a lower-level optimization problem, by variational inequalities or by complementarity constraints. Some of these equilibrium constraints can be written as smooth equalities and inequalities, but these constraints usually do not satisfy the common constraint qualifications of nonlinear programming. Alternative formulations yield either a nonsmooth mapping or the system \eqref{EqConstrSyst} with nonconvex $D$, the case considered in this paper. Prominent examples are {\em mathematical programs with complementarity constraints} (MPCC) or {\em mathematical programs with vanishing constraints} (MPVC). We refer the reader to the paper \cite{Sch04} for some more examples on this subject.

In  case when $D$ is not convex, only inclusions for the regular normal cone are known in general. The lower estimate is given by
\begin{equation}\label{EqLowerEstim}
\nabla F(\xb)^T\widehat N_D(F(\xb))\subset \widehat N_\Omega(\xb)
\end{equation}
and is known to hold with equality, if the Jacobian  $\nabla F(\xb)$ has full rank, cf. \cite[Example 6.7]{RoWe98}.  When we have equality in \eqref{EqLowerEstim}, the corresponding optimality conditions are usually called {\em S-stationarity} (strong stationarity) conditions in the literature on mathematical programs with equilibrium constraints (MPECs). The main drawback of the S-stationarity conditions is the requirement of strong constraint qualification conditions.

If one weakens the used constraint qualification condition then the inclusion \eqref{EqLowerEstim} will be strict in general. In this situation one has to consider an upper estimate to the regular normal cone  $\widehat N_\Omega(\xb)$. A commonly used upper estimate is provided by the so-called {\em limiting normal cone} to $\Omega$ at $\xb$.  The use of the limiting normal cone has the advantage, that a lot of calculus rules are available for its calculation; we refer the readers to the textbooks \cite{Mo06a,Mo06b,RoWe98}. Optimality conditions based on this upper estimate involving the limiting normal cone are usually called {\em M-stationarity} conditions. A main disadvantage of this approach is, that in general the regular normal cone is strictly included in the limiting normal cone. Therefore, in general M-stationarity does not preclude the existence of feasible descent directions.

The aim of this paper is to provide  estimates to the regular normal cone $\widehat N_\Omega(\xb)$ which are valid under very weak constraint qualification conditions and are tighter than the one based on the limiting normal cone.

For this purpose  we present two new approaches. The first one is motivated by a result due to Pang and Fukushima \cite{PaFu99} and yields an upper bound for the regular normal cone which is exact under some suitable assumptions. This upper estimate for the regular normal cone  constitutes a new stationarity concept called $\Q_M$-stationarity which is shown to be stronger than M-stationarity. We apply this approach to MPCC and improve the result due to Pang and Fukushima \cite{PaFu99}. For MPVC we derive a new qualification condition, which resembles the well known {\em Mangasarian Fromovitz constraint qualification} (MFCQ) of nonlinear programming, and allows the exact computation of the regular normal cone for MPVC. The obtained results are much stronger than the known results from literature \cite{AchKa08,DoShSt12,HoKa08,HoKa09,HoKaOut10,IzSo09}. Finally we analyze MPECs where the constraints are given by a generalized equation (GE) involving the normal cone mapping to $C^2$ inequalities together with parameter constraints. Again we derive upper bounds for the regular normal cone which can be exact under certain conditions and can be employed to replace the commonly used  conditions as in \cite[Theorem 3.4]{HenOutSur12}.

In the second approach treated in this paper we focus on the lower inclusion \eqref{EqLowerEstim} for the regular normal cone and state a condition which ensures equality. This new condition is  an extension of the recent result \cite[Theorem 4]{GfrOut14a} and we apply it also to MPECs with an additional parameter constraint.

The paper is organized as follows. In section 2 we present some basic definitions and results from variational analysis together with the definitions of various stationarity concepts. In section 3 we give the theoretical background for the two approaches presented in this paper for estimating the regular normal cone as well as the new concepts of $\Q$-stationarity and $\Q_M$-stationarity, respectively. In sections 4, 5 and 6 we apply the results from section 3 to MPCC, MPVC and an MPEC, respectively.

Our notation is basically standard. $K^\circ$ stands for the polar to a cone $K$ and $\Span\{u_1,\ldots,u_N\}$ stands for the subspace generated by the vectors $u_1,\ldots,u_N$. By $\nabla F(\xb)$ we normally denote the Jacobian of the mapping $F$ at $\xb$, but occasionally we use it like a linear mapping to write
\[\nabla F(\xb)^{-1}Q:=\{u\mv \nabla F(\xb)u\in Q\}\]
for a set $Q$. To ease the notation the Minkowski sum of a singleton $\{a\}$ and a set $A$ is denoted by $a+A$.

\section{Preliminaries}
Let us start with geometric objects. Given a set
$\Gamma\subset\R^d$ and a point $\bar z\in\Gamma$, define
the (Bouligand-Severi) {\em tangent/contingent cone} to $\Gamma$
at $\bar z$  by
\begin{equation}\label{tan}
T_\Gamma(\bar z):=\Big\{u\in\R^d\mv\exists\,t_k\searrow
0,\;u_k\to u\;\mbox{ with }\;\bar z+t_k u_k\in\Gamma ~\forall ~ k\Big\}.
\end{equation}
Note that one has $T_\Gamma(\bar z)=\R_+(\Gamma-\bar z)$ when $\Gamma$ is a convex polyhedron.

The (Fr\'{e}chet) {\em regular normal cone} to $\Gamma$ at $\bar
z\in\Gamma$ can be defined as the polar cone to the tangent cone by
\begin{equation}\label{fn}
\widehat N_\Gamma(\bar
z):=(T_\Gamma(\bar z))^\circ.
\end{equation}
Further, the (Mordukhovich) {\em limiting/basic normal cone} to $\Gamma$ at $\bar
z\in\Gamma$ is given by
\begin{equation}\label{EqLimNormalCone}N_\Gamma(\bar z):=\{z^\ast\mv \exists\ z_k\setto\Gamma \bar z,\ z_k^\ast\to z^\ast\mbox{ with } z_k^\ast\in \widehat N_\Gamma(z_k)~\forall~k\}.
\end{equation}
Note that the
tangent/contingent cone and the regular normal cone reduce to
the classical tangent cone and normal cone of convex analysis,
respectively, when the set $\Gamma$ is convex.
We put $T_\Gamma(\zb)=\widehat N_\Gamma(\zb)=N_\Gamma(\zb)=\emptyset$, if $\zb\not\in\Gamma$. Note that we always have
\[\widehat N_\Gamma(\bar z)\subset N_\Gamma(\bar z).\]

Next we recall some rules for calculating polar cones. For two closed convex cones $C_1$ and $C_2$ we have
\[(C_1\cup C_2)^\circ=(C_1+C_2)^\circ=C_1^\circ\cap C_2^\circ,\quad (C_1\cap C_2)^\circ =\cl(C_1^\circ+C_2^\circ)\]
and for closed convex cones $P_j,Q_j$, $j=1,\ldots,m$ we have
\begin{equation}
  \label{EqPolarConeProd}
  \Big(\prod_{i=1}^mP_i\Big)^\circ\cap \Big(\prod_{i=1}^mQ_i\Big)^\circ=\Big(\prod_{i=1}^mP_i^\circ\Big)\cap \Big(\prod_{i=1}^mQ_i^\circ\Big)=
  \prod_{i=1}^m\left(P_i^\circ\cap Q_i^\circ\right)=\prod_{i=1}^m\left(P_i\cup Q_i\right)^\circ.
\end{equation}

\begin{proposition}\label{PropPolarCones}
 Let $A$ be an $s\times d$ matrix, let $C\subset \R^s$ be a cone and assume that either there exists some $u$ such that $Au\in\ri\co C$ or $C$ is polyhedral, i.e. $C$ is the union of finitely many convex polyhedral cones $C_1,\ldots,C_p$. Then
        \begin{equation}\label{EqPolarA}
          \{u\mv Au\in \co C\}^\circ = A^T C^\circ
        \end{equation}
\end{proposition}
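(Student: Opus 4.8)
The plan is to start with two elementary reductions. Put $K:=\co C$; then $K$ is again a convex cone, and since the polar of a cone depends only on its closed convex hull, $C^\circ=K^\circ$. Hence \eqref{EqPolarA} is equivalent to $(A^{-1}K)^\circ=A^TK^\circ$, where $A^{-1}K:=\{u\mv Au\in K\}$ in the notation of the introduction. The inclusion ``$\supseteq$'' needs no hypothesis: for $y^\ast\in K^\circ$ and $u$ with $Au\in K$ one has $\skalp{A^Ty^\ast,u}=\skalp{y^\ast,Au}\le 0$, so $A^TK^\circ\subseteq(A^{-1}K)^\circ$. Everything thus comes down to the reverse inclusion, and this is the point where the two alternative assumptions are used.

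In the polyhedral case I would first record that $\co C$ coincides with the Minkowski sum $C_1+\dots+C_p$ (the convex hull of a union of convex cones equals their sum, because each summand is a cone) and that a finite sum of polyhedral cones is again polyhedral by Minkowski--Weyl. Writing the polyhedral cone $K=\co C$ as $\{y\mv\skalp{b_i,y}\le 0,\ i=1,\dots,r\}$ for finitely many $b_i$, one gets $K^\circ=\Span_{\ge 0}\{b_1,\dots,b_r\}$ (a finitely generated, hence closed, cone) and $A^{-1}K=\{u\mv\skalp{A^Tb_i,u}\le 0,\ i=1,\dots,r\}$, whose polar is the cone generated by $A^Tb_1,\dots,A^Tb_r$, that is, $A^TK^\circ$. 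This step is essentially just Farkas' lemma combined with closedness of finitely generated cones.

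In the Slater case, where some $u_0$ satisfies $Au_0\in\ri\co C=\ri K$, the extra difficulty is that $K$ itself need not be closed, so I would pass to $\bar K:=\cl K$. Since $\ri\bar K=\ri K\subseteq K$, the line-segment principle gives $(1-t)Au+tAu_0\in\ri K$ for every $u$ with $Au\in\bar K$ and every $t\in(0,1]$; letting $t\downarrow 0$ shows $\cl(A^{-1}K)=A^{-1}\bar K$, whence $(A^{-1}K)^\circ=(A^{-1}\bar K)^\circ$, while also $\bar K^\circ=K^\circ$. It then remains to prove $(A^{-1}\bar K)^\circ=A^T\bar K^\circ$ for the closed convex cone $\bar K$ meeting $\range A$ in its relative interior. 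For this I would invoke standard convex duality: with $\delta_{A^{-1}\bar K}=\delta_{\bar K}\circ A$ and $\delta_{\bar K}^\ast=\delta_{\bar K^\circ}$, the conjugate-of-a-composition rule (Rockafellar, Thm.~16.3, whose constraint qualification is exactly $\range A\cap\ri\bar K\neq\emptyset$) yields $\delta_{(A^{-1}\bar K)^\circ}=\delta_{A^{-1}\bar K}^\ast=\delta_{A^T\bar K^\circ}$ with the infimum attained; equivalently one can run a direct separation argument on $\bar K$ and $\range A$.

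I expect the reverse inclusion in the Slater case to be the only real obstacle. It is the place where a separation/duality theorem is unavoidable, and the non-closedness of $\co C$ has to be handled (via the reduction to $\bar K$ above) before that theorem applies; the constraint qualification is precisely what makes $A^T\bar K^\circ$ closed and the duality exact. The polyhedral case and the ``$\supseteq$'' inclusion are routine.
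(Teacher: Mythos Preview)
Your argument is correct. Both cases go through, and the overall strategy---reduce to $K=\co C$, get ``$\supseteq$'' for free, and use either Farkas or conjugate duality for ``$\subseteq$''---matches the paper's. There are two minor differences worth flagging.

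In the polyhedral case, the paper takes a slightly different route: it observes that $C^\circ=(\co C)^\circ$ is polyhedral, hence $A^TC^\circ$ is polyhedral (linear image of a polyhedron, Rockafellar Thm.~19.3) and therefore closed, and then invokes Rockafellar's Corollary~16.3.2 in the form $(A^{-1}K)^\circ=\cl(A^TK^\circ)$ to drop the closure. Your argument instead writes $K$ in $H$-representation and reads off both polars explicitly via Minkowski--Weyl/Farkas. Both are short; yours is more self-contained, the paper's more citation-driven.

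In the Slater case, the paper simply cites Rockafellar Cor.~16.3.2 directly: that result does \emph{not} require the cone to be closed, only that $A^{-1}(\ri K)\neq\emptyset$. So your detour through $\bar K$ and the line-segment principle, while perfectly correct, is unnecessary---Theorem~16.3 (and its corollary) already handle a non-closed $K$ because the qualification lives on $\ri K=\ri\bar K$. Dropping that step would bring your Slater argument in line with the paper's one-line proof.
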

\begin{proof}
     In case when there exists some $u$ with $Au\in\ri\co C$, the statement follows from \cite[Corollary 16.3.2]{Ro70}. Now consider the case when $C$ is polyhedral. Then $\co C=\sum_{i=1}^p C_i$ is a convex polyhedral set by \cite[Corollary 19.3.2]{Ro70} and so is its polar $(\co C)^\circ=C^\circ=\bigcap_{i=1}^p C_i^\circ$ by \cite[Corollary 19.2.2]{Ro70}. By virtue of \cite[Theorem 19.3]{Ro70} the set $A^TC^\circ$ is again convex and polyhedral and now the statement follows from \cite[Corollary 16.3.2]{Ro70} by taking into account that convex polyhedral sets are always closed.
\end{proof}
\begin{lemma}\label{LemIntersect}
  Let $A$ be an $s\times d$ matrix and let $S_1,S_2\subset \R^d$ be two sets. Then
  \[(AS_1)\cap (AS_2)= A(S_1\cap(\ker A+S_2)).\]
\end{lemma}
\begin{proof}
  If $z\in (AS_1)\cap (AS_2)$, then there are $s_1\in S_1$, $s_2\in S_2$ with $z=As_1= As_2$. Since $s_1=s_2+(s_1-s_2)$ and $A(s_1-s_2)=0$, the properties $s_1\in S_1\cap(\ker A+S_2)$ and $z\in A(S_1\cap(\ker A+S_2))$ follow. Conversely, if $z\in A(S_1\cap(\ker A+S_2))$, then there are $s_1\in S_1$, $s_2\in S_2$ and $r\in \ker A$ such that $s_1=r+s_2$ and $z=As_1\in AS_1$. It follows that $z=A(r+s_2)=As_2\in AS_2$ and thus $z\in (AS_1)\cap (AS_2)$.
\end{proof}
We now introduce generalizations of the  Abadie constraint qualification condition and the Guignard constraint qualification condition, respectively, as known from nonlinear programming.

\begin{definition}
Let $\Omega$ be given by \eqref{EqConstrSyst} and let $\xb\in\Omega$.
\begin{enumerate}
\item We say that the {\em generalized Abadie constraint qualification} (GACQ) holds at $\xb$ if
\begin{equation}\label{EqGACQ}
  T_\Omega(\xb)=\Tlin(\xb),
\end{equation}
where $\Tlin(\xb):=\{u\in\R^n\mv  \nabla F(\xb)u\in T_D(F(\xb))\}$ denotes the {\em linearized  cone}.
\item We say that the {\em generalized Guignard constraint qualification} (GGCQ) holds at $\xb$ if
\begin{equation}\label{EqGGCQ}
  (T_\Omega(\xb))^\circ=(\Tlin(\xb))^\circ.
\end{equation}
\end{enumerate}
\end{definition}

Obviously GGCQ is weaker than GACQ, but GACQ is easier to verify because several advanced methods from variational analysis are available. To this end we need the concepts of metric regularity and metric subregularity of multifunctions.

\begin{definition}Let $\Psi:\R^d\rightrightarrows\R^s$ be a multifunction,  $(\bar u,\bar v)\in\Gr \Psi$ and
 $\kappa>0$. Then
\begin{enumerate}
\item $\Psi$ is called {\em metrically regular with modulus
$\kappa$} near $(\bar u,\bar v)$ if there are neighborhoods $U$ of
$\bar u$ and $V$ of $\bar v$ such that
\begin{equation}
\label{EqMetrReg} \dist{u,\Psi^{-1}(v)}\leq
\kappa\dist{v,\Psi(u)}\ \forall (u,v)\in U\times V.
\end{equation}
\item $\Psi$ is called {\em metrically subregular with modulus
$\kappa$} at $(\bar u,\bar v)$ if there is a neighborhood $U$ of
$\bar u$ such that
\begin{equation}
\label{EqMetrSubReg} \dist{u,\Psi^{-1}(\bar v)}\leq
\kappa\dist{\bar v,\Psi(u)}\ \forall u\in U.
\end{equation}
\end{enumerate}
\end{definition}

It is well known that metric regularity of the multifunction
$\Psi$ near $(\bar u,\bar v)$ is equivalent to the Aubin property (also called Lipschitz-like or pseudo-Lipschitz)
of the inverse multifunction $\Psi^{-1}$ and metric subregularity
of $\Psi$ at $(\bar u,\bar v)$ is equivalent with the property of
{\em calmness} of its inverse.

Obviously, metric regularity of $\Psi$ near $(\bar u,\bar v)$ implies metric subregularity of $\Psi$ at $(\bar u,\bar v)$.

\begin{proposition}[{cf.\cite[Proposition 1]{HenOut05}}]\label{PropGACQ}
Let $\xb$ belong to the set $\Omega$ given by \eqref{EqConstrSyst}.
 If the perturbation mapping
\begin{equation}
  \label{EqPertMapping}M(x):=F(x)-D
\end{equation}
associated with the constraint system \eqref{EqConstrSyst} is metrically subregular at $(\xb,0)$, then $GACQ$ holds at $\xb$.
\end{proposition}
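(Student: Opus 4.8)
The plan is to establish the two inclusions behind \eqref{EqGACQ} separately, noting that one of them is automatic. For the inclusion $T_\Omega(\xb)\subseteq\Tlin(\xb)$ only the differentiability of $F$ at $\xb$ is needed: given $u\in T_\Omega(\xb)$ with associated $t_k\searrow 0$ and $u_k\to u$ satisfying $\xb+t_ku_k\in\Omega$, i.e. $F(\xb+t_ku_k)\in D$, a first-order expansion gives $(F(\xb+t_ku_k)-F(\xb))/t_k\to\nabla F(\xb)u$; since $F(\xb+t_ku_k)\in D$ and $F(\xb)\in D$, this limit lies in $T_D(F(\xb))$ by definition \eqref{tan}, so $u\in\Tlin(\xb)$.

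The reverse inclusion $\Tlin(\xb)\subseteq T_\Omega(\xb)$ is where the hypothesis enters. First observe that $M^{-1}(0)=\{x\mv F(x)\in D\}=\Omega$ and $\dist{0,M(x)}=\dist{F(x),D}$, so metric subregularity of $M$ at $(\xb,0)$ yields a modulus $\kappa>0$ and a neighborhood $U$ of $\xb$ with $\dist{x,\Omega}\le\kappa\,\dist{F(x),D}$ for all $x\in U$. Now fix $u\in\Tlin(\xb)$, so that $\nabla F(\xb)u\in T_D(F(\xb))$; by \eqref{tan} there are $\tau_k\searrow 0$ and $w_k\to\nabla F(\xb)u$ with $F(\xb)+\tau_kw_k\in D$. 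Put $x_k:=\xb+\tau_ku$, which lies in $U$ for $k$ large. Using the point $F(\xb)+\tau_kw_k\in D$ to bound the distance, together with the expansion $F(x_k)=F(\xb)+\tau_k\nabla F(\xb)u+o(\tau_k)$, we get $\dist{F(x_k),D}\le\tau_k\norm{\nabla F(\xb)u-w_k}+o(\tau_k)=o(\tau_k)$, and hence $\dist{x_k,\Omega}=o(\tau_k)$ by metric subregularity.

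To finish, choose $\hat x_k\in\Omega$ with $\norm{x_k-\hat x_k}\le\dist{x_k,\Omega}+\tau_k^2$ and set $u_k:=(\hat x_k-\xb)/\tau_k$. Then $\xb+\tau_ku_k=\hat x_k\in\Omega$ for every $k$, while $u_k=u+(\hat x_k-x_k)/\tau_k\to u$ since $\norm{\hat x_k-x_k}/\tau_k\to 0$; thus $u\in T_\Omega(\xb)$, and \eqref{EqGACQ} follows. The argument is essentially routine; the only point demanding care is the bookkeeping of the scaling in the distance estimate — making sure both the $o(\tau_k)$ remainder of the Taylor expansion and the approximate projection error $\norm{\hat x_k-x_k}$ still vanish after division by $\tau_k$ — and the observation that metric subregularity, in contrast to a merely sequential property, supplies the uniform estimate on the whole neighborhood $U$ that is needed to invoke it along the constructed sequence $x_k$.
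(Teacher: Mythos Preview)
Your proof is correct. The paper does not supply its own proof of this proposition; it simply cites \cite[Proposition 1]{HenOut05}, so there is nothing in the text to compare against beyond noting that your argument is the standard one underlying that cited result.
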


Metric regularity of the mapping \eqref{EqPertMapping} can be verified by the so-called {\em Mordukhovich criterion}, see, e.g., \cite[Example 9.44]{RoWe98}.
Tools for verifying metric subregularity of constraint systems can be found e.g. in \cite{GfrKl16}.

The following theorem states some fundamental relations between the regular and the limiting normal cone.
\begin{theorem}
  \label{ThInclNormalCone}
  Let $\Omega$ be given by \eqref{EqConstrSyst} and let $\xb\ \in \Omega$. Then
  \begin{equation}
    \label{EqInclRegNormalCone} \nabla F(\xb)^T\widehat N_D(F(\xb))\subset\widehat N_\Omega(\xb).
  \end{equation}
  On the other hand, if the multifunction \eqref{EqPertMapping} is metrically subregular at $(\xb,0)$ then
  \begin{equation}
    \label{EqInclLimNormalCone} N_\Omega(\xb)\subset \nabla F(\xb)^TN_D(F(\xb)).
  \end{equation}
  If $\nabla F(\xb)$ has full rank, then both inclusions \eqref{EqInclRegNormalCone} and \eqref{EqInclLimNormalCone} hold with equality.
\end{theorem}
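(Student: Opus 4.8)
The plan is to prove the three assertions separately.

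\emph{Lower estimate \eqref{EqInclRegNormalCone}.} First I would record that $T_\Omega(\xb)\subset\Tlin(\xb)$, which uses only differentiability of $F$ at $\xb$: for $u\in T_\Omega(\xb)$ pick $t_k\searrow 0$ and $u_k\to u$ with $F(\xb+t_ku_k)\in D$; then $\bigl(F(\xb+t_ku_k)-F(\xb)\bigr)/t_k=\nabla F(\xb)u_k+\oo(1)\to\nabla F(\xb)u$, so $\nabla F(\xb)u\in\TD$. Taking polars reverses the inclusion, $\widehat N_\Omega(\xb)=(T_\Omega(\xb))^\circ\supset(\Tlin(\xb))^\circ=(\nabla F(\xb)^{-1}\TD)^\circ$, and the elementary relation $\nabla F(\xb)^TC^\circ\subset(\nabla F(\xb)^{-1}C)^\circ$, valid for an arbitrary set $C$ because $\skalp{\nabla F(\xb)^Ty^\ast,u}=\skalp{y^\ast,\nabla F(\xb)u}$, yields \eqref{EqInclRegNormalCone} on taking $C=\TD$.

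\emph{Upper estimate \eqref{EqInclLimNormalCone} under metric subregularity.} This is the technical heart and, I expect, the main obstacle. Metric subregularity of $M$ at $(\xb,0)$ is exactly the local error bound $\dist{x,\Omega}\le\kappa\,\dist{F(x),D}$ on a neighbourhood $U$ of $\xb$. Put $\varphi(x):=\dist{F(x),D}$, which is locally Lipschitz as the composition of the $1$-Lipschitz function $\dist{\cdot,D}$ with the smooth $F$; then $\dist{\cdot,\Omega}\le\kappa\varphi$ on $U$, with common value $0$ at every point of $\Omega\cap U$, so, because a minorant taking the same value at $x$ has the larger regular subdifferential there, $\widehat\partial\,\dist{\cdot,\Omega}(x)\subset\kappa\,\widehat\partial\varphi(x)$ for $x\in\Omega\cap U$. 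Since $\widehat\partial\,\dist{\cdot,\Omega}(x)=\widehat N_\Omega(x)\cap\B$ for $x\in\Omega$ and, after rescaling the defining sequences into $\B$, $N_\Omega(\xb)\cap\B$ is the outer limit of $\widehat N_\Omega(x)\cap\B$ as $x\setto{\Omega}\xb$, passing to the outer limit gives $N_\Omega(\xb)\cap\B\subset\kappa\,\partial\varphi(\xb)$, hence $N_\Omega(\xb)\subset\R_+\partial\varphi(\xb)$. Finally the smooth chain rule $\partial\varphi(\xb)\subset\nabla F(\xb)^T\partial\,\dist{\cdot,D}(F(\xb))$, the inclusion $\partial\,\dist{\cdot,D}(F(\xb))\subset N_D(F(\xb))$ (valid as $F(\xb)\in D$) and the conicity of $N_D(F(\xb))$ give $N_\Omega(\xb)\subset\nabla F(\xb)^TN_D(F(\xb))$. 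Alternatively this inclusion can be quoted directly as one of the known calmness/metric-subregularity qualification conditions for the normal-cone chain rule; in either route the real work is upgrading the \emph{pointwise} error bound to a statement about \emph{limiting} normals.

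\emph{Equality when $\nabla F(\xb)$ has full rank.} I read ``full rank'' as $\nabla F(\xb)$ being onto. Then $M$ is metrically regular at $(\xb,0)$ — reducing, via nearest points in $D$, to metric regularity of $F$ near $\xb$, i.e.\ to surjectivity of $\nabla F(\xb)$ (Lyusternik--Graves) — hence metrically subregular, so the previous part gives $N_\Omega(\xb)\subset\nabla F(\xb)^TN_D(F(\xb))$, while Proposition \ref{PropGACQ} yields GACQ, i.e.\ $T_\Omega(\xb)=\Tlin(\xb)=\nabla F(\xb)^{-1}\TD$. Using surjectivity and that $\TD$ is a cone one checks $\co T_\Omega(\xb)=\nabla F(\xb)^{-1}(\co\TD)$ (for ``$\supset$'' take preimages of a convex-combination representation of $\nabla F(\xb)u\in\co\TD$ and absorb the resulting kernel element), and since polars are insensitive to convexification, Proposition \ref{PropPolarCones} with $A=\nabla F(\xb)$, $C=\TD$ — its hypothesis holding because $\nabla F(\xb)$ maps into $\ri\co\TD$ — gives $\widehat N_\Omega(\xb)=(\co T_\Omega(\xb))^\circ=\nabla F(\xb)^T\NrD$. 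It remains to prove $\nabla F(\xb)^TN_D(F(\xb))\subset N_\Omega(\xb)$: for $y^\ast\in N_D(F(\xb))$ pick $v_k\setto{D}F(\xb)$ and $y_k^\ast\to y^\ast$ with $y_k^\ast\in\widehat N_D(v_k)$; since $\nabla F(\xb)$ is onto, $F$ is open near $\xb$, so there are $x_k\in F^{-1}(v_k)\subset\Omega$ with $x_k\to\xb$, and \eqref{EqInclRegNormalCone} applied at $x_k$ gives $\nabla F(x_k)^Ty_k^\ast\in\widehat N_\Omega(x_k)$; passing to the limit, using continuity of $\nabla F$ at $\xb$, yields $\nabla F(\xb)^Ty^\ast\in N_\Omega(\xb)$. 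Together with \eqref{EqInclRegNormalCone}, this gives equality in both \eqref{EqInclRegNormalCone} and \eqref{EqInclLimNormalCone}.
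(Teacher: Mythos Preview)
Your proof is correct. The paper, by contrast, does not prove the theorem at all: it simply cites \cite[Theorem~6.14]{RoWe98} for \eqref{EqInclRegNormalCone}, \cite[Theorem~4.1]{HenJouOut02} for \eqref{EqInclLimNormalCone}, and \cite[Exercise~6.7]{RoWe98} for the equality statement under full rank. So the comparison is not between two different mathematical strategies but between a citation and a self-contained argument.

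That said, your three blocks line up well with what those references actually do. Your error-bound argument for \eqref{EqInclLimNormalCone}---majorising $\dist{\cdot,\Omega}$ by $\kappa\,\dist{F(\cdot),D}$, passing to limiting subdifferentials, and applying the Lipschitz chain rule---is precisely the mechanism behind the Henrion--Jourani--Outrata result; you have effectively reproduced its proof. For the full-rank case your argument is somewhat more elaborate than necessary: once $\nabla F(\xb)$ is surjective you can invoke Proposition~\ref{PropPolarCones} directly on $\Tlin(\xb)=\nabla F(\xb)^{-1}\TD$ without first establishing $\co T_\Omega(\xb)=\nabla F(\xb)^{-1}(\co\TD)$, since polars ignore convex hulls and $(\nabla F(\xb)^{-1}\TD)^\circ\supset(\nabla F(\xb)^{-1}\co\TD)^\circ$ is automatic while the reverse inclusion is just \eqref{EqInclRegNormalCone}. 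Your reverse inclusion $\nabla F(\xb)^TN_D(F(\xb))\subset N_\Omega(\xb)$ via openness of $F$ and the sequential definition of the limiting cone is exactly the standard argument behind \cite[Exercise~6.7]{RoWe98}.
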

\begin{proof}
  The inclusion \eqref{EqInclRegNormalCone} can be found in \cite[Theorem 6.14]{RoWe98}, whereas \eqref{EqInclLimNormalCone} follows from \cite[Theorem 4.1]{HenJouOut02}. For the statement on equality in the inclusions we refer to \cite[Exercise 6.7]{RoWe98}.
\end{proof}

At the end of this section we consider different stationarity concepts.
\begin{definition}
  Let $\xb$ be feasible for the program \eqref{EqOptProbl}, where $\Omega$ is given by \eqref{EqConstrSyst} and $f$ is assumed to be smooth.
  \begin{enumerate}
  \item We say that $\xb$ is {\em B-stationary (Bouligand stationary)} if
  \[0\in \nabla f(\xb)+\widehat N_\Omega(\xb).\]
  \item We say that $\xb$ is {\em S-stationary (strongly stationary)} if
  \[0\in \nabla f(\xb)+\nabla F(\xb)^T\widehat N_D(F(\xb)).\]
  \item We say that $\xb$ is {\em M-stationary (Mordukhovich stationary)} if
  \[0\in \nabla f(\xb)+\nabla F(\xb)^T N_D(F(\xb)).\]
  \end{enumerate}
\end{definition}
By the definition of the regular normal cone we have
\[\skalp{\nabla f(\xb),u}\geq 0\ \forall u\in T_\Omega(\xb)\]
at a B-Stationary point, which expresses that no feasible descent direction exists. Every local minimizer is known to be B-stationary. Conversely, if $\xb$ is B-stationary then there exists some smooth mapping $\hat f:\R^n\to\R$ with $\nabla\hat f(\xb)=\nabla f(\xb)$ such that $\xb$ is a global minimizer of the problem $\min _{x\in\Omega}\hat f(x)$, cf. \cite[Theorem 6.11]{RoWe98}.

From \eqref{EqInclRegNormalCone} it is easy to see that every S-stationary point is also B-stationary, but the reverse statement is not true in general, unless we have equality in \eqref{EqInclRegNormalCone}.

On the other hand, a B-stationary point $\xb$ is also M-stationary provided that the perturbation mapping $M$ is metrically subregular at $(\xb,0)$. However, M-stationarity does not preclude the existence of feasible descent directions, unless we have $\widehat N_\Omega(\xb)=N_\Omega(\xb)=\nabla F(\xb)^T N_D(F(\xb))$.

Since we have $\widehat N_\Omega(\xb)\subset  N_\Omega(\xb)$ by the definition, we derive  from Theorem \ref{ThInclNormalCone} the inclusion
\[\widehat N_\Omega(\xb)\subset \nabla F(\xb)^T N_D(F(\xb)).\]
under the assumption of metric subregularity of \eqref{EqPertMapping} at $(\xb,0)$. This relation can be strengthened by the following proposition.
\begin{proposition}\label{PropImprInclLimNormalCone}Let $\Omega$ be given by \eqref{EqConstrSyst}, let $\xb\ \in \Omega$ and assume that GGCQ is fulfilled, while the mapping $u\rightrightarrows\nabla  F(\xb)u-\TD$ is metrically subregular at $(0,0)$. Then
\[\widehat N_\Omega(\xb)\subset  \nabla F(\xb)^T N_{\TD}(0)\subset \nabla F(\xb)^T N_D(F(\xb)).\]
\end{proposition}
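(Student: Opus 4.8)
The plan is to establish the two inclusions separately. The second inclusion, $\nabla F(\xb)^T N_{\TD}(0)\subset \nabla F(\xb)^T N_D(F(\xb))$, should follow directly from the general fact that $N_{T_D(F(\xb))}(0)\subset N_D(F(\xb))$; this is a standard property of the limiting normal cone (the limiting normal cone to the tangent cone at the origin is contained in the limiting normal cone to the set itself), after which one simply applies $\nabla F(\xb)^T$ to both sides. So the real work is the first inclusion $\widehat N_\Omega(\xb)\subset \nabla F(\xb)^T N_{\TD}(0)$.

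For the first inclusion I would argue as follows. Under GGCQ we have $\widehat N_\Omega(\xb)=(T_\Omega(\xb))^\circ=(\Tlin(\xb))^\circ$, and by definition $\Tlin(\xb)=\{u\mv \nabla F(\xb)u\in\TD\}=\nabla F(\xb)^{-1}\TD$. Thus it suffices to show
\[(\nabla F(\xb)^{-1}\TD)^\circ\subset \nabla F(\xb)^T N_{\TD}(0).\]
Now introduce the auxiliary set $\Gamma:=\{u\mv \nabla F(\xb)u\in\TD\}$, which is exactly the preimage of the closed set $\TD$ under the linear map $\nabla F(\xb)$; this is a constraint system of the form \eqref{EqConstrSyst} with the \emph{linear} mapping $u\mapsto\nabla F(\xb)u$ in place of $F$ and $\TD$ in place of $D$. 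The associated perturbation mapping is precisely $u\rightrightarrows\nabla F(\xb)u-\TD$, which is assumed metrically subregular at $(0,0)$. Hence Theorem \ref{ThInclNormalCone}, applied to this linear constraint system at the point $0$, gives
\[\widehat N_\Gamma(0)\subset N_\Gamma(0)\subset \nabla F(\xb)^T N_{\TD}(0).\]
Since $\Gamma=\Tlin(\xb)$ is a cone, $\widehat N_\Gamma(0)=(T_\Gamma(0))^\circ=(\Gamma)^\circ=(\Tlin(\xb))^\circ=\widehat N_\Omega(\xb)$, where the last equality is GGCQ. Chaining these gives $\widehat N_\Omega(\xb)\subset \nabla F(\xb)^T N_{\TD}(0)$, as desired.

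The main point to get right is the identification $\widehat N_\Gamma(0)=(\Gamma)^\circ$ and the fact that Theorem \ref{ThInclNormalCone} is genuinely applicable to the linear system defining $\Gamma$; here one must check that the tangent cone to the cone $\Gamma=\Tlin(\xb)$ at its apex $0$ coincides with $\Gamma$ itself (true for any closed cone, since $T_\Gamma(0)=\cl\Gamma=\Gamma$), and that the hypothesis of Theorem \ref{ThInclNormalCone} — metric subregularity of the perturbation mapping of the linear system — is exactly what is postulated. I do not anticipate a serious obstacle here, only care in bookkeeping: verifying that $\Tlin(\xb)$ is closed (it is, being the preimage of the closed set $\TD$ under a continuous linear map), and that the smoothness/differentiability requirements of Theorem \ref{ThInclNormalCone} are trivially met for a linear map. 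The inclusion $N_{T_D(F(\xb))}(0)\subset N_D(F(\xb))$ used for the second part is classical and can be cited from \cite{RoWe98}.
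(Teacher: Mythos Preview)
Your proposal is correct and follows essentially the same route as the paper: use GGCQ to identify $\widehat N_\Omega(\xb)$ with $\widehat N_{\Tlin(\xb)}(0)$, apply Theorem~\ref{ThInclNormalCone} to the linear constraint system $u\mapsto\nabla F(\xb)u\in\TD$ under the assumed metric subregularity to get the first inclusion, and invoke the standard fact $N_{\TD}(0)\subset N_D(F(\xb))$ (the paper cites \cite[Proposition~6.27]{RoWe98}) for the second.
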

\begin{proof}
  By virtue of GGCQ we have $\widehat N_{\Omega}(\xb)= (\Tlin(\xb))^\circ=\widehat N_{\Tlin(\xb)}(0)$ and since $u\rightrightarrows\nabla  F(\xb)u-\TD$ is assumed to be metrically subregular at $(0,0)$, we can apply Theorem \ref{ThInclNormalCone} to obtain
  $\widehat N_{\Tlin(\xb)}(0)\subset N_{\Tlin(\xb)}(0)\subset \nabla F(\xb)^T N_{\TD}(0)$. By \cite[Proposition 6.27]{RoWe98} we have $N_{\TD}(0)\subset N_D(F(\xb))$ and this finishes the proof.
\end{proof}
If $\TD$ is the union of finitely many convex polyhedral cones, then the mapping $u\rightrightarrows\nabla  F(\xb)u-\TD$ is a polyhedral multifunction and consequently metrically subregular at $(0,0)$ by Robinson's result \cite{Rob81}. Hence we arrive at the following corollary which slightly improves \cite[Theorem 7]{FleKanOut07}.
\begin{corollary}\label{CorLinMstat}Let $\xb$ be B-stationary for the program \eqref{EqOptProbl}, where $\Omega$ is given by \eqref{EqConstrSyst} and $f$ is assumed to be smooth. If GGCQ is fulfilled at $\xb$ and $\TD$ is the union of finitely many convex polyhedral cones, then $\xb$ is M-stationary and even the stronger condition
\[0\in\nabla f(\xb)+ \nabla F(\xb)^T N_{\TD}(0)\]
holds.
\end{corollary}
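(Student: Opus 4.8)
The plan is to obtain the corollary as an immediate consequence of Proposition \ref{PropImprInclLimNormalCone}, so the only thing to do is to check that its two standing hypotheses are satisfied at $\xb$. GGCQ is assumed outright, so it remains to verify that the multifunction $u\rightrightarrows\nabla F(\xb)u-\TD$ is metrically subregular at $(0,0)$.

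First I would record that when $\TD=\bigcup_{i=1}^pK_i$ with each $K_i$ a convex polyhedral cone, the graph of this multifunction equals $\bigcup_{i=1}^p\{(u,v)\mv \nabla F(\xb)u-v\in K_i\}$, a finite union of convex polyhedral sets in $\R^n\times\R^m$; hence the multifunction is polyhedral in Robinson's sense. By Robinson's theorem \cite{Rob81} a polyhedral multifunction is metrically subregular at every point of its graph (equivalently, its polyhedral inverse is upper Lipschitz, hence calm) — this is precisely the remark made in the paragraph preceding the corollary, so it may simply be invoked here. With both hypotheses of Proposition \ref{PropImprInclLimNormalCone} now in force, that proposition yields the chain of inclusions
\[\widehat N_\Omega(\xb)\subset\nabla F(\xb)^TN_{\TD}(0)\subset\nabla F(\xb)^TN_D(F(\xb)).\]

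To finish, I would invoke B-stationarity of $\xb$, i.e. $-\nabla f(\xb)\in\widehat N_\Omega(\xb)$. Substituting this into the first inclusion above gives $-\nabla f(\xb)\in\nabla F(\xb)^TN_{\TD}(0)$, equivalently $0\in\nabla f(\xb)+\nabla F(\xb)^TN_{\TD}(0)$, which is the stronger assertion; composing it with the second inclusion then gives $0\in\nabla f(\xb)+\nabla F(\xb)^TN_D(F(\xb))$, i.e. M-stationarity of $\xb$. I do not anticipate any genuine obstacle: the entire content of the corollary is the conjunction of Proposition \ref{PropImprInclLimNormalCone} with Robinson's polyhedrality result, and the only step that needs a moment's care — identifying $u\rightrightarrows\nabla F(\xb)u-\TD$ as a polyhedral multifunction and reading off metric subregularity at the specific point $(0,0)$ rather than merely nearby — is routine and, as already noted, carried out in the surrounding text.
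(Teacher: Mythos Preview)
Your proposal is correct and follows exactly the paper's approach: the paragraph immediately preceding the corollary observes that the polyhedrality assumption on $\TD$ makes $u\rightrightarrows\nabla F(\xb)u-\TD$ a polyhedral multifunction, hence metrically subregular at $(0,0)$ by Robinson's result, and then Proposition~\ref{PropImprInclLimNormalCone} together with B-stationarity yields both conclusions. Your write-up merely spells out a bit more explicitly why the graph is a finite union of polyhedra and how the two stationarity statements are read off from the inclusion chain, but the argument is identical.
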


\section{\label{SecTheory}Estimating the regular normal cone}

Throughout this section we assume that the set $\Omega$ is given by $\eqref{EqConstrSyst}$, where $F:\R^n\to\R^m$ is continuously differentiable at the reference point $\xb\in\Omega$ and $D\subset\R^m$ is closed. Further we assume that the objective $f:\R^n\to\R$ of the program \eqref{EqOptProbl} is continuously differentiable at  $\xb$ and GGCQ holds.

The main goal of this section is to provide a tight estimate for the regular normal cone $\widehat N_\Omega(\xb)$, which, thanks to GGCQ, amounts to $(\Tlin(\xb))^\circ$. To this end we discuss two possibilities, the first one being motivated by the paper of Pang and Fukushima \cite{PaFu99}  is based on the following observation.
\begin{theorem}\label{ThQ} Let $Q_1$ and $Q_2$ denote two closed convex cones contained in $\TD$. If
\begin{equation}\label{EqAssQ}
  (\nabla F(\xb)^{-1}Q_i)^\circ=\nabla F(\xb)^TQ_i^\circ,\ i=1,2
\end{equation}
then
\begin{equation}
  \label{EqInclRegNormalConeQ1_Q2}
   \widehat N_\Omega(\xb)\subset \nabla F(\xb)^T\left(Q_1^\circ\cap(\ker\nabla F(\xb)^T+ Q_2^\circ)\right)=(\nabla F(\xb)^TQ_1^\circ)\cap(\nabla F(\xb)^TQ_2^\circ).
\end{equation}
Further, if
\begin{equation}\label{EqEqualityBetaStat}\nabla F(\xb)^T\left(Q_1^\circ\cap(\ker\nabla F(\xb)^T+ Q_2^\circ)\right)\subset \nabla F(\xb)^T\NrD,
\end{equation}
 then equality holds in \eqref{EqInclRegNormalConeQ1_Q2}.
\end{theorem}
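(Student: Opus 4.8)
The plan is to establish the chain in \eqref{EqInclRegNormalConeQ1_Q2} in three pieces: first the equality on the right, then the inclusion on the left, and finally the equality assertion under \eqref{EqEqualityBetaStat}. For the rightmost equality, I would apply Lemma \ref{LemIntersect} with $A=\nabla F(\xb)^T$, $S_1=Q_1^\circ$ and $S_2=Q_2^\circ$; this gives directly $\nabla F(\xb)^T(Q_1^\circ\cap(\ker\nabla F(\xb)^T+Q_2^\circ))=(\nabla F(\xb)^TQ_1^\circ)\cap(\nabla F(\xb)^TQ_2^\circ)$, since $\ker A=\ker\nabla F(\xb)^T$. So this step is essentially free.

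For the inclusion $\widehat N_\Omega(\xb)\subset(\nabla F(\xb)^TQ_1^\circ)\cap(\nabla F(\xb)^TQ_2^\circ)$, I would argue for each $i\in\{1,2\}$ separately that $\widehat N_\Omega(\xb)\subset\nabla F(\xb)^TQ_i^\circ$. Using GGCQ (in force throughout the section) we have $\widehat N_\Omega(\xb)=(\Tlin(\xb))^\circ$. Since $Q_i\subset\TD$, we get $\nabla F(\xb)^{-1}Q_i\subset\nabla F(\xb)^{-1}\TD=\Tlin(\xb)$, and polarity reverses inclusions, so $(\Tlin(\xb))^\circ\subset(\nabla F(\xb)^{-1}Q_i)^\circ$. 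Now invoke the hypothesis \eqref{EqAssQ}, which says exactly $(\nabla F(\xb)^{-1}Q_i)^\circ=\nabla F(\xb)^TQ_i^\circ$. Chaining these yields $\widehat N_\Omega(\xb)\subset\nabla F(\xb)^TQ_i^\circ$ for $i=1,2$, hence $\widehat N_\Omega(\xb)$ lies in the intersection, which by the first step equals the claimed set.

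For the final claim, assume \eqref{EqEqualityBetaStat}. Combining the inclusion just proved with \eqref{EqEqualityBetaStat} gives $\widehat N_\Omega(\xb)\subset\nabla F(\xb)^T\NrD$. But by Theorem \ref{ThInclNormalCone}, inclusion \eqref{EqInclRegNormalCone}, the reverse inclusion $\nabla F(\xb)^T\NrD\subset\widehat N_\Omega(\xb)$ always holds. Hence $\widehat N_\Omega(\xb)=\nabla F(\xb)^T\NrD$, and feeding this back through \eqref{EqEqualityBetaStat} and the inclusion chain forces equality throughout \eqref{EqInclRegNormalConeQ1_Q2}: indeed $\nabla F(\xb)^T\NrD\subset\widehat N_\Omega(\xb)\subset\nabla F(\xb)^T(Q_1^\circ\cap(\ker\nabla F(\xb)^T+Q_2^\circ))\subset\nabla F(\xb)^T\NrD$, so all the sets coincide.

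The argument is short and the only subtlety is bookkeeping: making sure the polar-reversal step is applied to the correct cone and that \eqref{EqAssQ} is used with the right $Q_i$. I do not anticipate a genuine obstacle here — the weight of the theorem is carried by the hypotheses \eqref{EqAssQ} and \eqref{EqEqualityBetaStat}, whose verification in concrete cases (MPCC, MPVC) is presumably where the real work lies later in the paper. The one place to be careful is that Lemma \ref{LemIntersect} is stated for arbitrary sets, so no convexity or closedness of $Q_i^\circ$ is needed for the rightmost equality, though of course $Q_i^\circ$ is automatically a closed convex cone since $Q_i$ is.
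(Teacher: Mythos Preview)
Your proposal is correct and follows essentially the same approach as the paper's proof: both use GGCQ to write $\widehat N_\Omega(\xb)=(\Tlin(\xb))^\circ$, then exploit $\nabla F(\xb)^{-1}Q_i\subset\Tlin(\xb)$ together with \eqref{EqAssQ} to land in $\nabla F(\xb)^TQ_1^\circ\cap\nabla F(\xb)^TQ_2^\circ$, invoke Lemma~\ref{LemIntersect} for the rightmost equality, and for the equality claim combine \eqref{EqEqualityBetaStat} with \eqref{EqInclRegNormalCone}. The only cosmetic difference is that the paper passes through $(\nabla F(\xb)^{-1}Q_1\cup\nabla F(\xb)^{-1}Q_2)^\circ$ in one line rather than treating $i=1,2$ separately.
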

\begin{proof}
Since $\nabla F(\xb)^{-1}Q_i\subset \nabla F(\xb)^{-1}\TD=\Tlin(\xb)$, $i=1,2$ we have
\begin{eqnarray*}\widehat N_\Omega(\xb)&=&(\Tlin(\xb))^\circ\subset(F(\xb)^{-1}Q_1\cup\nabla F(\xb)^{-1} Q_2)^\circ =(F(\xb)^{-1}Q_1)^\circ\cap(\nabla F(\xb)^{-1}Q_2)^\circ\\
&=&\nabla F(\xb)^TQ_1^\circ\cap \nabla F(\xb)^TQ_2^\circ\end{eqnarray*}
and \eqref{EqInclRegNormalConeQ1_Q2} follows from Lemma \ref{LemIntersect}.
 To show the sufficiency  of condition \eqref{EqEqualityBetaStat} for equality in \eqref{EqInclRegNormalConeQ1_Q2}, note that condition \eqref{EqEqualityBetaStat} together with \eqref{EqInclRegNormalConeQ1_Q2} implies $\widehat N_\Omega(\xb)\subset \nabla F(\xb)^T\NrD$. Now, equality in \eqref{EqInclRegNormalConeQ1_Q2} follows from \eqref{EqInclRegNormalCone}.
\end{proof}
The proper choice of $Q_1$ and $Q_2$ is crucial in order that \eqref{EqInclRegNormalConeQ1_Q2} provides a good estimate for the regular normal cone. It is obvious that we want to choose the cones $Q_i$, $i=1,2$ as large as possible in order that the inclusion \eqref{EqInclRegNormalConeQ1_Q2} is tight. Further it is reasonable that a good choice of $Q_1,Q_2$ fulfills
\begin{equation}
  \label{EqNecEqual}Q_1^\circ\cap Q_2^\circ=\NrD
\end{equation}
because then condition \eqref{EqEqualityBetaStat} holds whenever $\nabla F(\xb)$ has full rank.

Since $Q_i\subset \TD$, we  have $Q_i^\circ\supset(\TD)^\circ=\NrD$, $i=1,2$ and consequently, $Q_1^\circ\cap(\ker\nabla F(\xb)^T+ Q_2^\circ)\supset Q_1^\circ\cap Q_2^\circ\supset \NrD$.  Hence the inclusion \eqref{EqEqualityBetaStat} can never be strict.

The following definition is motivated by Theorem \ref{ThQ}.
\begin{definition}\label{DefQStat}Let $\cal Q$ denote some collection of pairs $(Q_1,Q_2)$ of closed convex cones fulfilling
\begin{equation}\label{EqCondQi}Q_i\subset \TD, (\nabla F(\xb)^{-1}Q_i)^\circ=\nabla F(\xb)^TQ_i^\circ, i=1,2.
\end{equation}

{\bf(i)} Given $(Q_1,Q_2)\in {\cal Q}$ we say that $\xb$ is {\em ${\cal Q}$-stationary with respect to $(Q_1,Q_2)$} for the program \eqref{EqOptProbl}, if
\[0\in\nabla f(\xb)+ \nabla F(\xb)^T\left(Q_1^\circ\cap(\ker\nabla F(\xb)^T+ Q_2^\circ)\right).\]

{\bf(ii)} We say that $\xb$ is {\em ${\cal Q}$-stationary} for the program \eqref{EqOptProbl}, if  $\xb$ is $\Q$-stationary with respect to some pair $(Q_1,Q_2)\in{\cal Q}$.

{\bf(iii)} We say that $\xb$ is $\Q_M$-stationary, if there exists a pair $(Q_1,Q_2)\in {\cal Q}$ such that
\[0\in\nabla f(\xb)+\nabla F(\xb)^T\left(Q_1^\circ\cap(\ker\nabla F(\xb)^T+ Q_2^\circ)\cap N_D(F(\xb))\right).\]
\end{definition}

The following corollary follows immediately from the definitions and Theorem \ref{ThQ}.
\begin{corollary}\label{CorBetaStat}
  Assume that $\xb$ is B-stationary for the program \eqref{EqOptProbl}. Then $\xb$ is $\Q$-stationary with respect to every pair $(Q_1,Q_2)\in{\cal Q}$. Conversely, if $\xb$ is $\Q$-stationary with respect to some pair $(Q_1,Q_2)\in{\cal Q}$ fulfilling condition \eqref{EqEqualityBetaStat}, then $\xb$ is S-stationary and consequently, also B-stationary.
\end{corollary}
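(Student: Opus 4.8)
The plan is to read off both assertions directly from Theorem~\ref{ThQ} together with the lower inclusion \eqref{EqInclRegNormalCone}, using the observation that membership in $\Q$ is exactly tailored to make Theorem~\ref{ThQ} applicable. Indeed, by Definition~\ref{DefQStat} every pair $(Q_1,Q_2)\in\Q$ consists of closed convex cones contained in $\TD$ and satisfies $(\nabla F(\xb)^{-1}Q_i)^\circ=\nabla F(\xb)^TQ_i^\circ$ for $i=1,2$, i.e.\ precisely the hypotheses \eqref{EqAssQ} of Theorem~\ref{ThQ}. Hence for each such pair Theorem~\ref{ThQ} gives
\[\widehat N_\Omega(\xb)\subset \nabla F(\xb)^T\left(Q_1^\circ\cap(\ker\nabla F(\xb)^T+ Q_2^\circ)\right),\]
and this inclusion is the workhorse of the first implication.

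For the first assertion I would argue as follows. Assume $\xb$ is B-stationary, which means $-\nabla f(\xb)\in\widehat N_\Omega(\xb)$, and fix an arbitrary pair $(Q_1,Q_2)\in\Q$. Combining this with the displayed inclusion above yields $-\nabla f(\xb)\in\nabla F(\xb)^T(Q_1^\circ\cap(\ker\nabla F(\xb)^T+Q_2^\circ))$, i.e.\ $0\in\nabla f(\xb)+\nabla F(\xb)^T(Q_1^\circ\cap(\ker\nabla F(\xb)^T+Q_2^\circ))$, which is exactly $\Q$-stationarity with respect to $(Q_1,Q_2)$ in the sense of Definition~\ref{DefQStat}(i). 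Since the pair was arbitrary, $\xb$ is $\Q$-stationary with respect to every pair in $\Q$.

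For the converse, suppose $\xb$ is $\Q$-stationary with respect to some $(Q_1,Q_2)\in\Q$ for which condition \eqref{EqEqualityBetaStat} holds, so that $-\nabla f(\xb)\in\nabla F(\xb)^T(Q_1^\circ\cap(\ker\nabla F(\xb)^T+Q_2^\circ))$. By \eqref{EqEqualityBetaStat} the latter set is contained in $\nabla F(\xb)^T\NrD$, hence $-\nabla f(\xb)\in\nabla F(\xb)^T\NrD$, i.e.\ $\xb$ is S-stationary. Finally, the lower inclusion \eqref{EqInclRegNormalCone} gives $\nabla F(\xb)^T\NrD\subset\widehat N_\Omega(\xb)$, so $-\nabla f(\xb)\in\widehat N_\Omega(\xb)$, which is B-stationarity.

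I do not expect any genuine obstacle here: the corollary is a straightforward bookkeeping consequence of Theorem~\ref{ThQ} and \eqref{EqInclRegNormalCone}, and the only point that needs a moment's care is to record explicitly that the defining conditions \eqref{EqCondQi} of a pair $(Q_1,Q_2)\in\Q$ are identical to the hypotheses \eqref{EqAssQ} of Theorem~\ref{ThQ}, so the theorem may be invoked for every pair in the collection $\Q$ without any extra verification.
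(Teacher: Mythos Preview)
Your proof is correct and follows exactly the route the paper intends: the corollary is stated to follow immediately from the definitions and Theorem~\ref{ThQ}, and you have spelled this out precisely, including the appeal to \eqref{EqInclRegNormalCone} for the implication S-stationary $\Rightarrow$ B-stationary. There is nothing to add.
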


The following lemma follows immediately from \eqref{EqInclRegNormalConeQ1_Q2} and the definition of $\Q$-stationarity.
\begin{lemma}\label{LemBetaStat}Let $(Q_1,Q_2)\in {\cal Q}$. Then  $\xb$ is $\Q$-stationary with respect to $(Q_1,Q_2)$ for the program \eqref{EqOptProbl} if and only if $-\nabla f(\xb)\in \nabla F(\xb)^TQ_i^\circ$, $i=1,2$.
\end{lemma}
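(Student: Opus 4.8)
The plan is to read the statement off directly from the equality already recorded in \eqref{EqInclRegNormalConeQ1_Q2} of Theorem~\ref{ThQ}. By Definition~\ref{DefQStat}(i), the assertion that $\xb$ is $\Q$-stationary with respect to the given pair $(Q_1,Q_2)\in\Q$ means precisely that
\[-\nabla f(\xb)\in \nabla F(\xb)^T\bigl(Q_1^\circ\cap(\ker\nabla F(\xb)^T+ Q_2^\circ)\bigr),\]
so the whole task reduces to showing that this membership is equivalent to having $-\nabla f(\xb)\in\nabla F(\xb)^TQ_i^\circ$ for $i=1$ and $i=2$ simultaneously.

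First I would recall that the second equality in \eqref{EqInclRegNormalConeQ1_Q2}, namely
\[\nabla F(\xb)^T\bigl(Q_1^\circ\cap(\ker\nabla F(\xb)^T+ Q_2^\circ)\bigr)=(\nabla F(\xb)^TQ_1^\circ)\cap(\nabla F(\xb)^TQ_2^\circ),\]
is nothing but Lemma~\ref{LemIntersect} applied with $A=\nabla F(\xb)^T$, $S_1=Q_1^\circ$ and $S_2=Q_2^\circ$; in particular it is valid for the given pair $(Q_1,Q_2)$ with no further hypotheses. Substituting this identity into the displayed membership above, $\Q$-stationarity with respect to $(Q_1,Q_2)$ is equivalent to $-\nabla f(\xb)\in(\nabla F(\xb)^TQ_1^\circ)\cap(\nabla F(\xb)^TQ_2^\circ)$, and since an element lies in the intersection of two sets exactly when it lies in each of them, this is in turn equivalent to $-\nabla f(\xb)\in\nabla F(\xb)^TQ_i^\circ$ for $i=1,2$. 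That completes the argument.

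There is essentially no hard step here: the proof is a one-line consequence of Lemma~\ref{LemIntersect} together with the trivial characterization of membership in an intersection. The only point worth flagging is that, although the defining conditions \eqref{EqCondQi} of the collection $\Q$ (i.e.\ $Q_i\subset\TD$ and $(\nabla F(\xb)^{-1}Q_i)^\circ=\nabla F(\xb)^TQ_i^\circ$) are what make $\Q$-stationarity a meaningful relaxation of B-stationarity through Theorem~\ref{ThQ}, they play no role in the equivalence asserted by this lemma, which is purely a statement about the algebraic shape of the set $\nabla F(\xb)^T\bigl(Q_1^\circ\cap(\ker\nabla F(\xb)^T+ Q_2^\circ)\bigr)$.
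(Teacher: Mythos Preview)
Your argument is correct and coincides with the paper's own justification: the lemma is declared to follow immediately from the equality in \eqref{EqInclRegNormalConeQ1_Q2} (i.e.\ Lemma~\ref{LemIntersect}) together with the definition of $\Q$-stationarity. Your remark that the structural conditions \eqref{EqCondQi} are not needed for this particular equivalence is also accurate.
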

\begin{corollary}
  \label{CorSStatImplBStat}Let $\xb$ be S-stationary for the program \eqref{EqOptProbl}. Then  $\xb$ is $\Q$-stationary with respect to every $(Q_1,Q_2)\in {\cal Q}$.
\end{corollary}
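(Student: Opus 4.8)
The plan is to reduce the claim to Lemma \ref{LemBetaStat}, which characterizes $\Q$-stationarity with respect to a fixed pair $(Q_1,Q_2)\in\Q$ as the two inclusions $-\nabla f(\xb)\in\nabla F(\xb)^TQ_i^\circ$, $i=1,2$. Thus, fixing an arbitrary pair $(Q_1,Q_2)\in\Q$, it suffices to verify these two inclusions under the S-stationarity hypothesis $-\nabla f(\xb)\in\nabla F(\xb)^T\NrD$.

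The key (and essentially only) observation needed is inclusion-reversal of the polar operation combined with the defining property $Q_i\subset\TD$ from \eqref{EqCondQi}: taking polars reverses inclusions, so $Q_i^\circ\supset(\TD)^\circ=\NrD$ for $i=1,2$. Applying the linear map $\nabla F(\xb)^T$ preserves this inclusion, whence $\nabla F(\xb)^TQ_i^\circ\supset\nabla F(\xb)^T\NrD$. Since $-\nabla f(\xb)$ lies in the right-hand side by S-stationarity, it lies in $\nabla F(\xb)^TQ_i^\circ$ for both $i$, and Lemma \ref{LemBetaStat} then yields $\Q$-stationarity with respect to $(Q_1,Q_2)$. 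As the pair was arbitrary, this proves the corollary.

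I do not expect any genuine obstacle here: the statement is a one-line consequence of the set monotonicity of polarity together with Lemma \ref{LemBetaStat}. An equivalent route, essentially the same proof in different packaging, is to note that S-stationarity implies B-stationarity via the lower inclusion \eqref{EqInclRegNormalCone} and then invoke Corollary \ref{CorBetaStat} directly. The only point requiring a moment's care is to make clear that it is the condition $Q_i\subset\TD$ in \eqref{EqCondQi} that is being used, and not the compatibility part of \eqref{EqCondQi}, which plays no role in this argument.
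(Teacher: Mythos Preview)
Your proof is correct and is essentially identical to the paper's own argument: the paper also observes that $Q_i\subset\TD$ gives $\NrD\subset Q_i^\circ$, hence $-\nabla f(\xb)\in\nabla F(\xb)^T\NrD\subset\nabla F(\xb)^TQ_i^\circ$, and then invokes Lemma~\ref{LemBetaStat}. Your remark that only the inclusion $Q_i\subset\TD$ from \eqref{EqCondQi} is used (not the polar-compatibility part) is also accurate.
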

\begin{proof}Since $Q_i\subset \TD$, we have $\widehat N_D(F(\xb))\subset Q_i^\circ$, $i=1,2$. Hence S-stationarity of $\xb$ implies
\[-\nabla f(\xb)\in \nabla F(\xb)^T\widehat N_D(F(\xb))\subset \nabla F(\xb)^TQ_i^\circ\]
and the assertion follows from Lemma \ref {LemBetaStat}.

\end{proof}
\begin{remark}Note that for $i=1,2$ the program
\[(P_i)\qquad \min_{u\in\R^n}\nabla f(\xb)u\quad \text{subject to}\quad \nabla F(\xb)u\in Q_i\]
is a convex program  and therefore the first-order optimality condition
\[-\nabla f(\xb)\in N_{\nabla F(\xb)^{-1}Q_i}(0)=(\nabla F(\xb)^{-1}Q_i)^\circ=\nabla F(\xb)^TQ_i\]
is both necessary and sufficient in order that $u=0$ is a solution of $(P_i)$. Hence $\xb$ is $\Q$-stationary with respect to $(Q_1,Q_2)$ if and only if $0$ is a solution for the programs $(P_1)$ and $(P_2)$, respectively.
\end{remark}

By the definition, a $\Q_M$-stationary point is both M-stationary and $\Q$-stationary. However, a B-stationary point is $\Q_M$-stationary only under some additional condition. This is due to the fact that under the assumptions of Theorem \ref{ThInclNormalCone} we have
\[\widehat N_\Omega(\xb)\subset \nabla F(\xb)^T\left(Q_1^\circ\cap(\ker\nabla F(\xb)^T+ Q_2^\circ)\right)\cap \nabla F(\xb)^TN_D(F(\xb))\ \forall (Q_1,Q_2)\in{\cal Q},\] but in general
\begin{eqnarray*}\lefteqn{\nabla F(\xb)^T\left(Q_1^\circ\cap(\ker\nabla F(\xb)^T+ Q_2^\circ)\right)\cap \nabla F(\xb)^TN_D(F(\xb))}\qquad\\
\qquad&\not=&\nabla F(\xb)^T\left(Q_1^\circ\cap(\ker\nabla F(\xb)^T+ Q_2^\circ)\cap N_D(F(\xb))\right).
\end{eqnarray*}
Clearly, equality holds when $\nabla F(\xb)$ possesses full row rank, but in this case a B-stationary point is already S-stationary. In the following theorem we state three more sufficient conditions ensuring $\Q_M$ stationarity of a B-stationary point.
\begin{theorem}\label{ThM_betaStat}
  Assume that $\xb$ is B-stationary for the program \eqref{EqOptProbl}. Then  $\xb$ is $\Q_M$-stationary if  any of the following three conditions holds:
  \begin{enumerate}
    \item There exists a pair $(Q_1,Q_2)\in{\cal Q}$ such that
    \begin{equation}\label{EqBeta_M_Suff1} Q_1^\circ\cap(\ker\nabla F(\xb)^T+ Q_2^\circ)\subset N_D(F(\xb)).\end{equation}
   \item $\xb$ is M-stationary and for every $\lambda\in N_D(F(\xb))$ there is some pair $(Q_1,Q_2)\in{\cal Q}$ with $\lambda\in Q_1^\circ.$
   \item $\TD$ is the union of finitely many convex polyhedral sets and  for every $t\in \TD$ there is some pair $(Q_1,Q_2)\in \Q$ satisfying $t\in Q_1$.
  \end{enumerate}
\end{theorem}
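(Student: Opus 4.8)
The plan is to reduce all three cases to one mechanism and then treat them in turn. First recall that, since $\xb$ is B-stationary and GGCQ holds, Corollary \ref{CorBetaStat} together with Lemma \ref{LemBetaStat} gives $-\nabla f(\xb)\in\nabla F(\xb)^TQ_i^\circ$ for $i=1,2$ and \emph{every} pair $(Q_1,Q_2)\in\Q$. The key observation is that, in order to establish $\Q_M$-stationarity with respect to a pair $(Q_1,Q_2)\in\Q$, it suffices to exhibit a multiplier $\lambda\in Q_1^\circ\cap N_D(F(\xb))$ with $\nabla F(\xb)^T\lambda=-\nabla f(\xb)$: choosing (by the recalled fact) some $\mu\in Q_2^\circ$ with $\nabla F(\xb)^T\mu=-\nabla f(\xb)$, we get $\lambda-\mu\in\ker\nabla F(\xb)^T$, hence $\lambda=\mu+(\lambda-\mu)\in Q_2^\circ+\ker\nabla F(\xb)^T$, so that $\lambda\in Q_1^\circ\cap(\ker\nabla F(\xb)^T+Q_2^\circ)\cap N_D(F(\xb))$ and $0\in\nabla f(\xb)+\nabla F(\xb)^T\lambda$. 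With this, case (1) is immediate: by \eqref{EqBeta_M_Suff1} the set $Q_1^\circ\cap(\ker\nabla F(\xb)^T+Q_2^\circ)$ is already contained in $N_D(F(\xb))$, so $\Q_M$-stationarity with respect to $(Q_1,Q_2)$ coincides with $\Q$-stationarity with respect to $(Q_1,Q_2)$, which holds by Corollary \ref{CorBetaStat}. Case (2) is immediate as well: M-stationarity provides $\lambda\in N_D(F(\xb))$ with $\nabla F(\xb)^T\lambda=-\nabla f(\xb)$, the hypothesis provides a pair $(Q_1,Q_2)\in\Q$ with $\lambda\in Q_1^\circ$, and the observation applies.

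Case (3) is the substantial one. Since $\TD$ is a cone and a finite union of convex polyhedral sets, it is in fact a finite union of convex polyhedral \emph{cones} $C_1,\dots,C_p$ (replace each piece by the polyhedral cone it generates, which remains contained in the cone $\TD$). Hence Corollary \ref{CorLinMstat} applies: $\xb$ is M-stationary and $-\nabla f(\xb)\in\nabla F(\xb)^TN_{\TD}(0)$, so there is $\lambda\in N_{\TD}(0)$ with $\nabla F(\xb)^T\lambda=-\nabla f(\xb)$, and $\lambda\in N_D(F(\xb))$ by \cite[Proposition 6.27]{RoWe98}. By the key observation, it now suffices to produce a pair $(Q_1,Q_2)\in\Q$ with $\lambda\in Q_1^\circ$.

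To produce such a pair, I would use that the polyhedral arrangement $\{C_1,\dots,C_p\}$ stratifies $\TD$ into finitely many pieces on each of which the tangent cone $T_{\TD}(\cdot)$, and hence $\widehat N_{\TD}(\cdot)$, is constant, while $\widehat N_{\TD}(\cdot)$ is invariant under positive rescaling because $\TD$ is a cone. Since $N_{\TD}(0)$ is the outer limit of $\widehat N_{\TD}(z)$ over $z\in\TD$ with $z\to 0$, it equals the union of these finitely many closed cones; hence either $\lambda\in(\TD)^\circ$ — and then $\lambda\in Q_1^\circ$ for \emph{every} pair $(Q_1,Q_2)\in\Q$ — or $\lambda\in\widehat N_{\TD}(\zeta)$ for some $\zeta\in\TD$ with $\|\zeta\|=1$. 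In the second case the hypothesis furnishes $(Q_1,Q_2)\in\Q$ with $\zeta\in Q_1\subseteq\TD$; since $Q_1$ is a convex cone containing $\zeta$ and contained in $\TD$, one has $\zeta+tq\in Q_1\subseteq\TD$ for every $q\in Q_1$ and $t\ge 0$, so $Q_1\subseteq T_{\TD}(\zeta)$ and therefore $\lambda\in\widehat N_{\TD}(\zeta)=(T_{\TD}(\zeta))^\circ\subseteq Q_1^\circ$. Combined with the key observation, this finishes case (3).

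I expect the delicate point to be precisely the representation of $N_{\TD}(0)$ as a regular normal cone $\widehat N_{\TD}(\zeta)$ at a unit vector $\zeta$, i.e.\ the verification that the arrangement $\{C_1,\dots,C_p\}$ yields only finitely many distinct values of $\widehat N_{\TD}(\cdot)$, each attained on a scale-invariant stratum; the remaining polar-cone bookkeeping and the correction of the multiplier into $\ker\nabla F(\xb)^T+Q_2^\circ$ are routine. If one is content to assume $\Q$ finite — which is the situation in every application in Sections 4--6 — this step can be bypassed: along a sequence $z_j\in\TD$ with $z_j\to 0$ and $\lambda_j\to\lambda$, $\lambda_j\in\widehat N_{\TD}(z_j)$, the hypothesis gives pairs $(Q_1^j,Q_2^j)\in\Q$ with $z_j\in Q_1^j$, whence $\lambda_j\in(Q_1^j)^\circ$ by the tangent-cone inclusion above, and pigeonholing on the finitely many pairs yields a single $(Q_1,Q_2)\in\Q$ with $\lambda\in Q_1^\circ$.
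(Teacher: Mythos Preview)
Your argument is correct and follows the same route as the paper's proof. Cases (1) and (2) are handled identically; your ``key observation'' is exactly the content of Lemma~\ref{LemIntersect} (applied with $S_1=Q_1^\circ\cap N_D(F(\xb))$ and $S_2=Q_2^\circ$), just unpacked by hand.

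For Case~(3) the paper likewise invokes Corollary~\ref{CorLinMstat} to obtain $\lambda\in N_{\TD}(0)$ and then needs a point $t\in\TD$ with $\lambda\in\widehat N_{\TD}(t)$; it gets this in one line by citing \cite[Lemma~3.4]{Gfr13b}, whereas you reconstruct it via the finite polyhedral stratification of $\TD$. Your stratification argument is sound (the relevant strata are the relative interiors of the faces of the $C_i$, on which $T_{\TD}(\cdot)$ is constant and scale-invariant), and your inclusion $Q_1\subset T_{\TD}(\zeta)$ is exactly the paper's observation $\widehat N_{\TD}(t)\subset\widehat N_{Q_1}(t)=Q_1^\circ\cap\{t\}^\perp$ written dually. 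So nothing is missing; the only difference is that the paper outsources the ``delicate point'' you flag to an existing lemma, while you sketch it and offer the finite-$\Q$ fallback, which is indeed superfluous once the stratification claim is accepted.
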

\begin{proof}
  Under the condition \eqref{EqBeta_M_Suff1}, $\Q_M$-stationarity of $\xb$ follows immediately from the definition and Corollary \ref{CorBetaStat}.
  Let us prove the second case. Since $\xb$ is  M-stationary, there exists some $\lambda\in N_D(F(\xb))$ verifying $-\nabla f(\xb)=\nabla F(\xb)^T\lambda$ and by the assumption there is some $(Q_1,Q_2)\in{\cal Q}$ with $\lambda\in Q_1^\circ$ implying $-\nabla f(\xb)\in \nabla F(\xb)^T\big(Q_1^\circ\cap N_D(F(\xb))\big)$. By using that  $\xb$ is B-stationary and therefore also $\Q$-stationary with respect to $(Q_1,Q_2)$ by Corollary \ref{CorBetaStat}, by virtue of Lemmas \ref{LemBetaStat} and \ref{LemIntersect} we obtain
  \begin{eqnarray}\nonumber-\nabla f(\xb)&\in& \nabla F(\xb)^T\big(Q_1^\circ\cap N_D(F(\xb))\big)\cap \nabla F(\xb)^TQ_2^\circ \\
  \label{EqAuxQM_Stat}&=&   \nabla F(\xb)^T\big(Q_1^\circ\cap N_D(F(\xb))\cap (\ker \nabla F(\xb)^T+Q_2^\circ)\big)
  \end{eqnarray}
  showing $\Q_M$-stationarity of $\xb$. Now let us prove  the sufficiency of the third condition. By Corollary \ref{CorLinMstat} there is some $\lambda\in N_{\TD}(0)$ with $-\nabla f(\xb)=\nabla F(\xb)^T\lambda$ and by using  \cite[Lemma 3.4]{Gfr13b}, we can find some $t\in \TD$ with $\lambda\in \widehat N_{\TD}(t)$. Our assumption guarantees that there is some pair $(Q_1,Q_2)\in\Q$ with $t\in Q_1\subset\TD$ and therefore
  $\lambda\in\widehat N_{\TD}(t)\subset \widehat N_{Q_1}(t)=Q_1^\circ\cap \{t\}^\perp\subset Q_1^\circ$ by convexity of $Q_1$. By \cite[Proposition 6.27]{RoWe98} we obtain  $\lambda\in N_{\TD}(0)\subset N_D(F(\xb))$ and the same arguments as used just before yield \eqref{EqAuxQM_Stat} showing $\Q_M$-stationarity of $\xb$.
\end{proof}

We summarize the relations between the various stationarity concepts in the following picture.
\unitlength1mm
\[\begin{picture}(110,52)
\put(0,34){\framebox(20,5){S-stat.}}
\put(20,36.5){\parbox{20mm}{\[\longrightarrow\]}}
\put(40,34){\framebox(20,5){B-stat.}}
\put(60,36.5){\parbox{20mm}{\[\mathop{\longrightarrow}\limits^{{\rm GGCQ,}\atop{\rm Thm. \ref{ThM_betaStat}}}\]}}
\put(80,34){\framebox(20,5){$\Q_M$-stat.}}
\put(34.5,17){\framebox(31,9){ \parbox{31mm}{\small$\Q$-stat. w.r.t.\\ every $(Q_1,Q_2)\in\Q$}}}
\put(34.5,0){\framebox(31,9){ \parbox{31mm}{\small$\Q$-stat. w.r.t.\\ some $(Q_1,Q_2)\in\Q$}}}
\put(90,17){\framebox(20,5){ M-stat.}}
\put(38.5,47){\framebox(25,5){loc. minimizer}}
\put(50,12){$\downarrow$}
\put(50,29){$\downarrow$ \scriptsize GGCQ}
\put(50,42){$\downarrow$}
\put(30,6){\vector(-2,3){17}}
\put(2,17){\scriptsize Condition \eqref{EqEqualityBetaStat}}
\put(14,32){\vector(2,-1){17}}
\put(86,32){\vector(-2,-3){17}}
\put(94,32){\vector(2,-3){6}}
\end{picture}\]

Below we will work out the concepts of $\Q$- and $\Q_M$-stationarity for the special cases of mathematical programs with complementarity constraints, vanishing constraints and  constraints involving a generalized equation, respectively, and in the first two cases we will present explicit expressions for the pair $(Q_1,Q_2)$ establishing $\Q_M$-stationarity.

Now we consider another possibility to estimate the regular normal cone to $\Omega$, which  is an enhancement of the approach used in the recent paper \cite{GfrOut14a}.
For every nonempty convex cone $Q\subset\R^m$ we define
\[\Tb(Q):=\TD\cap\Big(\big(\range\nabla F(\xb)\cap \TD\big)+Q\Big),\]
i.e. $\Tb(Q)$ is the collection of all $t\in \TD$ such that there are $u\in\R^n$ and $q\in Q$  with
\[\nabla F(\xb)u=t-q\in \TD.\]
Further we define
\[\Cb(Q):=\{u\mv\nabla F(\xb)u\in\co \Tb(Q)\}.\]
It is easy to see that both $\Tb(Q)$ and $\Cb(Q)$ are cones,  that $\Cb(Q)$ is convex and that $\TD\cap Q\subset \Tb(Q)$.
\begin{theorem}\label{ThRegNormalConeIncl}For every nonempty convex cone $Q\subset \R^m$ satisfying
\begin{equation}
  \label{EqCondQ}\widehat N_\Omega(\xb)\subset \{u\mv\nabla F(\xb)u\in Q\}^\circ
\end{equation}
there holds
\begin{equation}\label{EqInclRegNormalConeNew}
\widehat N_\Omega(\xb)= (\Cb(Q))^\circ.
\end{equation}
\end{theorem}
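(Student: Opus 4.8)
The plan is to use GGCQ to replace $\widehat N_\Omega(\xb)$ by $(\Tlin(\xb))^\circ$, and then to prove the two inclusions $(\Cb(Q))^\circ\subset(\Tlin(\xb))^\circ$ and $(\Tlin(\xb))^\circ\subset(\Cb(Q))^\circ$ separately. Throughout, recall that $Q$ being a (nonempty) convex cone gives $0\in Q$, and that $\co$ of a cone is precisely the set of finite sums of its elements.

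First I would dispose of $(\Cb(Q))^\circ\subset\widehat N_\Omega(\xb)$. This follows by polarization once we know $\Tlin(\xb)\subset\Cb(Q)$, and the latter is immediate: if $\nabla F(\xb)u\in\TD$, then $\nabla F(\xb)u\in\range\nabla F(\xb)\cap\TD$, so $\nabla F(\xb)u=\nabla F(\xb)u+0\in\big(\range\nabla F(\xb)\cap\TD\big)+Q$, whence $\nabla F(\xb)u\in\Tb(Q)\subset\co\Tb(Q)$, i.e.\ $u\in\Cb(Q)$. Polarizing and using $(\Tlin(\xb))^\circ=\widehat N_\Omega(\xb)$ finishes this half.

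The real content is the reverse inclusion $\widehat N_\Omega(\xb)\subset(\Cb(Q))^\circ$. I would fix $\eta\in\widehat N_\Omega(\xb)$ and an arbitrary $u\in\Cb(Q)$ and show $\skalp{\eta,u}\le0$. Write $\nabla F(\xb)u\in\co\Tb(Q)$ as a finite sum $\sum_{j=1}^k t_j$ with $t_j\in\Tb(Q)$; by the definition of $\Tb(Q)$, each $t_j$ decomposes as $t_j=\nabla F(\xb)u_j+q_j$ with $\nabla F(\xb)u_j\in\TD$ — hence $u_j\in\Tlin(\xb)$ — and $q_j\in Q$. Setting $w:=u-\sum_{j=1}^k u_j$, one computes $\nabla F(\xb)w=\sum_{j=1}^k q_j\in Q$, because $Q$ is a convex cone. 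Now the standing hypothesis \eqref{EqCondQ} gives $\eta\in\{v\mv\nabla F(\xb)v\in Q\}^\circ$ and therefore $\skalp{\eta,w}\le0$, while GGCQ gives $\eta\in(\Tlin(\xb))^\circ$ and therefore $\skalp{\eta,u_j}\le0$ for each $j$. Adding up, $\skalp{\eta,u}=\skalp{\eta,w}+\sum_{j=1}^k\skalp{\eta,u_j}\le0$, so $\eta\in(\Cb(Q))^\circ$, which completes the proof.

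The step I expect to be the crux is this decomposition of an arbitrary element of $\Cb(Q)$ into a summand $\sum_j u_j$ lying in the linearized cone $\Tlin(\xb)$ and a summand $w$ that $\nabla F(\xb)$ maps into $Q$: this is exactly what allows the two available facts about $\eta$ — membership in $(\Tlin(\xb))^\circ$ coming from GGCQ, and the hypothesis \eqref{EqCondQ} — to be applied in tandem. Everything else (elementary polarity of cones, the description of $\co$ of a cone as finite sums, lifting $s_j\in\range\nabla F(\xb)$ to some $u_j$) is routine, and in particular no Carath\'eodory-type bound on the number of terms $k$ is needed.
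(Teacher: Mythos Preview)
Your proof is correct and follows essentially the same route as the paper's: both directions are proved by the same mechanism, namely showing $\Tlin(\xb)\subset\Cb(Q)$ via $0\in Q$ for one inclusion, and for the other decomposing $\nabla F(\xb)u\in\co\Tb(Q)$ into pieces so that $u$ splits as a sum of elements of $\Tlin(\xb)$ plus a vector that $\nabla F(\xb)$ sends into $Q$, then applying GGCQ and \eqref{EqCondQ} respectively. The only cosmetic difference is that you represent an element of $\co\Tb(Q)$ as a finite sum of elements of the cone $\Tb(Q)$, whereas the paper writes it as a convex combination $\sum_i\alpha_it_i$ and carries the coefficients $\alpha_i$ through; since $\Tb(Q)$ is a cone these representations are equivalent, and your version is arguably a bit cleaner.
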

\begin{proof}
 We first show the inclusion $\widehat N_\Omega(\xb)\subset(\Cb(Q))^\circ$. Let $x^\ast\in \widehat N_\Omega(\xb)$ be arbitrarily fixed. In order to show $x^\ast \in (\Cb(Q))^\circ$ we have to prove $\skalp{x^\ast,u}\leq 0\ \forall u\in\Cb(Q)$.  Consider any $u\in\Cb(Q)$. Since $\nabla F(\xb)u\in \co \Tb(Q)$, $\nabla F(\xb)u$ can be represented  as convex combination $\sum_{i=1}^N\alpha_it_i$ of elements $t_i\in\Tb(Q)$, $i=1,\ldots,N$ with coefficients $\alpha_i\in[0,1]$, $\sum_{i=1}^N\alpha_i=1$. By the definition of the set $\Tb(Q)$ we can find for each $i=1,\ldots,N$, elements $u_i\in\R^n$ and $q_i\in Q$ such that
\[\nabla F(\xb)u_i=t_i-q_i\in \TD.\]
By taking into account that  $x^\ast\in \widehat N_\Omega(\xb)=\{u\mv\nabla F(\xb)u\in\TD\}^\circ$ by GGCQ, we obtain $\skalp{x^\ast,u_i}\leq 0$ $\forall i$. Further we have
\[\nabla F(\xb)u=\sum_{i=1}^N\alpha_it_i=\sum_{i=1}^N\alpha_i(\nabla F(\xb)u_i+q_i)\] and therefore
\[\nabla F(\xb)(u-\sum_{i=1}^N\alpha_iu_i)=\sum_{i=1}^N\alpha_iq_i.\]
Since $Q$ is assumed to be convex, we conclude $\nabla F(\xb)(u-\sum_{i=1}^N\alpha_iu_i)\in Q$ and hence, by using \eqref{EqCondQ}, we can argue $\skalp{x^\ast, u-\sum_{i=1}^N\alpha_iu_i}\leq 0$. This yields
\[\skalp{x^\ast,u}\leq\skalp{x^\ast,\sum_{i=1}^N\alpha_iu_i}=\sum_{i=1}^N\alpha_i\skalp{x^\ast,u_i}\leq0\]
and, since $u\in \Cb(Q)$ was arbitrary, we derive the  claimed inclusion $x^\ast\in (\Cb(Q))^\circ$.
In order to show the reverse inclusion $\widehat N_\Omega(\xb)\supset(\Cb(Q))^\circ$ consider $x^\ast\in (\Cb(Q))^\circ$. Then for arbitrary $u\in\Tlin(\xb)$ we have
\[t:=\nabla F(\xb)u = t-0\in T_D(F(\xb)),\]
showing $t\in \Tb(Q)$ and $u\in \Cb(Q)$. Hence, $\skalp{x^\ast,u}\leq 0$ and, because $u\in\Tlin(\xb)$ was chosen arbitrarily, we conclude $x^\ast\in (\Tlin(\xb))^\circ=\widehat N_\Omega(\xb)$ by GGCQ, and $(\Cb(Q))^\circ\subset\widehat N_\Omega(\xb)$ follows.
\end{proof}
\begin{remark}
  \label{RemLargeQ}Condition \eqref{EqCondQ} is in particular fulfilled, if $Q\subset\TD$.
\end{remark}
Of course, in practice it is a difficult task to compute $(\Cb(Q))^\circ$.
In practical applications, for given $Q$ we try to find a cone $\tilde{\cal T}\subset \Tb(Q)$ and then apply Proposition \ref{PropPolarCones} to obtain
\begin{equation}\label{EqPolarCone}(\Cb(Q))^\circ\subset\{u\mv\nabla F(\xb)u\in\co \tilde {\cal T}\}^\circ=\nabla F(\xb)^T\tilde{\cal T}^\circ,
\end{equation}
provided there exists some $u$ with $\nabla F(\xb)u\in \ri\co \tilde{\cal T}$ or $\tilde{\cal T}$ is  polyhedral.
Using \eqref{EqPolarCone} we obtain the following corollary from Theorem \ref{ThRegNormalConeIncl}.
\begin{corollary}\label{CorS_Stat}
Assume that there exists some convex cone $Q\subset \R^m$ fulfilling \eqref{EqCondQ} and some cone $\tilde{\cal T}\subset \Tb(Q)$ such that $\NrD=\tilde{\cal T}^\circ$ and either there is some $u\in\R^n$ with $\nabla F(\xb)u\in\ri \co \tilde{\cal T}$ or $\tilde{\cal T}$ is polyhedral. Then
\[\widehat N_\Omega(\xb)=\nabla F(\xb)^T\NrD.\]
\end{corollary}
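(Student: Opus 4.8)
The plan is to combine Theorem \ref{ThRegNormalConeIncl} with the estimate \eqref{EqPolarCone} to squeeze $\widehat N_\Omega(\xb)$ between $\nabla F(\xb)^T\NrD$ from below and $\nabla F(\xb)^T\tilde{\cal T}^\circ$ from above, and then use $\NrD=\tilde{\cal T}^\circ$ to close the sandwich. First I would invoke the lower estimate \eqref{EqInclRegNormalCone} from Theorem \ref{ThInclNormalCone}, which gives $\nabla F(\xb)^T\NrD\subset\widehat N_\Omega(\xb)$ with no extra hypotheses. For the reverse inclusion, since $Q$ satisfies \eqref{EqCondQ}, Theorem \ref{ThRegNormalConeIncl} applies and yields $\widehat N_\Omega(\xb)=(\Cb(Q))^\circ$. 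Because $\tilde{\cal T}\subset\Tb(Q)$, we have $\co\tilde{\cal T}\subset\co\Tb(Q)$, hence $\Cb(Q)=\{u\mv\nabla F(\xb)u\in\co\Tb(Q)\}\supset\{u\mv\nabla F(\xb)u\in\co\tilde{\cal T}\}$, so passing to polars reverses the inclusion: $(\Cb(Q))^\circ\subset\{u\mv\nabla F(\xb)u\in\co\tilde{\cal T}\}^\circ$.

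The key step is then to apply Proposition \ref{PropPolarCones} with $A=\nabla F(\xb)$ and $C=\tilde{\cal T}$: its hypothesis is exactly the stated alternative — either there is some $u$ with $\nabla F(\xb)u\in\ri\co\tilde{\cal T}$, or $\tilde{\cal T}$ is polyhedral (the union of finitely many convex polyhedral cones) — so \eqref{EqPolarA} gives $\{u\mv\nabla F(\xb)u\in\co\tilde{\cal T}\}^\circ=\nabla F(\xb)^T\tilde{\cal T}^\circ$. Substituting $\NrD=\tilde{\cal T}^\circ$, we obtain $\widehat N_\Omega(\xb)=(\Cb(Q))^\circ\subset\nabla F(\xb)^T\NrD$. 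Together with the lower estimate this proves $\widehat N_\Omega(\xb)=\nabla F(\xb)^T\NrD$.

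I do not anticipate a genuine obstacle here: the corollary is essentially a bookkeeping consequence of Theorem \ref{ThRegNormalConeIncl}, Proposition \ref{PropPolarCones}, and the lower bound \eqref{EqInclRegNormalCone}, and the displayed chain \eqref{EqPolarCone} already records the one nontrivial passage. The only point requiring a moment's care is monotonicity of $Q\mapsto\Cb(Q)$ under enlarging the inner cone $\tilde{\cal T}$ to $\Tb(Q)$ and the correct direction of the polar inclusions, but this is immediate from the definitions of $\Tb(Q)$ and $\Cb(Q)$ given just before Theorem \ref{ThRegNormalConeIncl}. One should also note that the hypothesis \eqref{EqCondQ} is what makes Theorem \ref{ThRegNormalConeIncl} available, and that it holds automatically whenever $Q\subset\TD$ by Remark \ref{RemLargeQ}, which is how the corollary will typically be applied.
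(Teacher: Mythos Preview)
Your proposal is correct and follows essentially the same route as the paper's proof: combine the lower estimate \eqref{EqInclRegNormalCone}, the equality $\widehat N_\Omega(\xb)=(\Cb(Q))^\circ$ from Theorem \ref{ThRegNormalConeIncl}, and the upper estimate \eqref{EqPolarCone} (which packages your monotonicity step together with Proposition \ref{PropPolarCones}) into the sandwich $\nabla F(\xb)^T\NrD\subset\widehat N_\Omega(\xb)=(\Cb(Q))^\circ\subset\nabla F(\xb)^T\tilde{\cal T}^\circ=\nabla F(\xb)^T\NrD$. The paper simply writes this chain in one line without unpacking the intermediate inclusions as you do.
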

\begin{proof}
  Using \eqref{EqInclRegNormalCone},  Theorem \ref{ThRegNormalConeIncl} and \eqref{EqPolarCone} together with  the assumptions of the corollary we obtain
  \begin{eqnarray*}\nabla F(\xb)^T \NrD\subset\widehat N_\Omega(\xb)= (\Cb(Q))^\circ\subset \nabla F(\xb)^T\tilde{\cal T}^\circ
  =\nabla F(\xb)^T\NrD
  \end{eqnarray*}
  and the assertion follows.
\end{proof}
\section{Application to MPCC}
In this section we consider a {\em mathematical program with complementarity constraints} (MPCC) of the form
\begin{eqnarray}
\nonumber\min &&f(x)\\
\nonumber\text{subject to }&&h(x)=0,\\
\label{EqMPCC}&&g(x)\leq0,\\
\nonumber&&0\leq G(x)\perp H(x)\geq 0,
\end{eqnarray}
where $f:\R^n\to\R$, $h:\R^n\to\R^{m_E}$, $g:\R^n\to\R^{m_I}$, $G:\R^n\to\R^{m_C}$ and $H:\R^n\to\R^{m_C}$ are assumed to be  continuously differentiable. There are several possibilities to write the constraints of \eqref{EqMPCC} in the form \eqref{EqConstrSyst}, we use here the formulation with
\[F(x)=\big(h(x),g(x),-G_1(x),-H_1(x),\ldots,-G_{m_C}(x),-H_{m_C}(x)\big),\quad D=\{0\}^{m_E}\times\R_-^{m_I}\times D_C^{m_C},\]
where
\[D_C:=\{(a,b)\in\R^2_-\mv ab=0\}.\]
In what follows we denote the feasible set of \eqref{EqMPCC} by $\Omega_C$. Given a feasible point  $\xb\in\Omega_C$ we introduce the following index sets of constraints active at $\xb$:
\begin{eqnarray*}
I^g&:=&\{i\in \{1,\ldots, m_I\}\mv g_i(\xb)=0\},\\
I^{0+}&:=&\{i\in \{1,\ldots, m_C\}\mv G_i(\xb)=0 <H_i(\xb)\},\\
I^{00}&:=&\{i\in \{1,\ldots, m_C\}\mv G_i(\xb)=0 =H_i(\xb)\},\\
I^{+0}&:=&\{i\in \{1,\ldots, m_C\}\mv G_i(\xb)>0 =H_i(\xb)\}.
\end{eqnarray*}
Straightforward calculations yield that
\begin{equation}\label{EqT_D_MPCC}T_D(F(\xb))=\{0\}^{m_E}\times T_{\R^{m_I}_-}(g(\xb))\times\prod_{i=1}^{m_C}T_{D_C}(-G_i(\xb), -H_i(\xb))
\end{equation}
with $T_{\R^{m_I}_-}(g(\xb))=\{v\in\R^{m_I}\mv v_i\leq 0, i\in I^g\}$,
\[T_{D_C}(-G_i(\xb), -H_i(\xb))=\begin{cases}\{0\}\times\R&\text{if }i\in I^{0+},\\
D_C&\text{if }i\in I^{00},\\
\R\times \{0\}&\text{if }i\in I^{+0},\end{cases}
\]
and consequently $T^{\rm lin}_{\Omega_C}(\xb)$ is the collection of all $u\in\R^n$ fulfilling the system
\begin{eqnarray}\nonumber&&\nabla h(\xb)u= 0,\\
\nonumber &&\nabla g_i(\xb)u\leq 0,\ i\in I^g\\
\label{EqLinTanConeMPCC} &&-\nabla G_i(\xb)u=0,\ i\in I^{0+}\\
\nonumber &&-\nabla H_i(\xb)u= 0,\ i\in I^{+0},\\
\nonumber && 0\geq-\nabla G_i(\xb)u\perp -\nabla H_i(\xb)u\leq 0,\ i\in I^{00}.
 \end{eqnarray}
 Further we have
  \[\quad
\widehat N_{D_C}(-G_i(\xb), -H_i(\xb))=\begin{cases}\R\times\{0\}&\text{if }i\in I^{0+},\\
\R_+\times\R_+&\text{if }i\in I^{00},\\
 \{0\}\times\R&\text{if }i\in I^{+0},\end{cases}\]
 $N_{D_C}(-G_i(\xb), -H_i(\xb))=\widehat N_{D_C}(-G_i(\xb), -H_i(\xb))$ for $i\in I^{+0}\cup I^{0+}$ and $N_{D_C}(-G_i(\xb), -H_i(\xb))=(\R_+\times\R_+)\cup(\{0\}\times\R)\cup(\R\times \{0\})$ for $i\in I^{00}$; cf. \cite{ FleKan06, FleKanOut07, Ye99}.

 Note that GACQ for MPCC is equivalent to MPEC-ACQ as introduced by Flegel and Kanzow \cite{FleKan05}. Similarly, GGCQ for MPCC ie equivalent to MPEC-GCQ \cite{FleKan06}.

In order to apply Theorem \ref{ThQ} and the concept of $\Q$-stationarity we define for every partition $(\beta_1,\beta_2)$ of the biactive index set $I^{00}$ the
convex polyhedric cone
\[Q^{\beta_1,\beta_2}_{CC}:=\{0\}^{m_E}\times T_{\R^{m_I}_-}(g(\xb))\times\prod_{i=1}^{m_C}\tau^{\beta_1,\beta_2}_{i},\]
where $\tau^{\beta_1,\beta_2}_{i}:=T_{D_C}(-G_i(\xb), -H_i(\xb))$ if $i\in I^{0+}\cup I^{+0}$ and
\[\tau^{\beta_1,\beta_2}_{i}:=\begin{cases}
\{0\}\times\R_-&\mbox{if $i\in\beta_1$,}\\
\R_-\times\{0\}&\mbox{if $i\in\beta_2$.}
\end{cases}
\]
\begin{lemma}\label{LemBasPropMPCC}
  For every partition $(\beta_1,\beta_2)\in\Pa(I^{00})$ the pair $(Q_1,Q_2)=(Q^{\beta_1,\beta_2}_{CC},Q^{\beta_2,\beta_1}_{CC})$ consists of two closed convex cones fulfilling \eqref{EqCondQi} and \eqref{EqNecEqual}.
\end{lemma}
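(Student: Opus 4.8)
The plan is to verify the three required properties of the pair $(Q_1,Q_2)=(Q^{\beta_1,\beta_2}_{CC},Q^{\beta_2,\beta_1}_{CC})$ separately: (a) each $Q_i$ is a closed convex cone; (b) $Q_i\subset\TD$; (c) the polarity identity $(\nabla F(\xb)^{-1}Q_i)^\circ=\nabla F(\xb)^TQ_i^\circ$ holds; and (d) $Q_1^\circ\cap Q_2^\circ=\NrD$, which is \eqref{EqNecEqual}. Property (a) is immediate: $Q^{\beta_1,\beta_2}_{CC}$ is a Cartesian product of sets each of which is either $\{0\}$, the polyhedral cone $T_{\R^{m_I}_-}(g(\xb))=\{v\mv v_i\le 0,\ i\in I^g\}$, or one of $\{0\}\times\R$, $\R\times\{0\}$, $\{0\}\times\R_-$, $\R_-\times\{0\}$; each factor is a closed convex (in fact polyhedral) cone, hence so is the product.

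For (b), one compares factor by factor with the formula \eqref{EqT_D_MPCC} for $\TD$. On the $h$- and $g$-blocks the factors of $Q_i$ and $\TD$ coincide. On the $i$-th complementarity block with $i\in I^{0+}\cup I^{+0}$ they again coincide by definition of $\tau^{\beta_1,\beta_2}_i$. For $i\in I^{00}$ we have $T_{D_C}(-G_i(\xb),-H_i(\xb))=D_C=(\{0\}\times\R_-)\cup(\R_-\times\{0\})$, and $\tau^{\beta_1,\beta_2}_i$ equals one of the two ``branches'' $\{0\}\times\R_-$ or $\R_-\times\{0\}$, which is contained in $D_C$; so the inclusion $Q_i\subset\TD$ follows. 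For (c): since $Q_i$ is polyhedral (a product of polyhedral cones), $\co Q_i=Q_i$ and Proposition~\ref{PropPolarCones} applies with $A=\nabla F(\xb)$, $C=Q_i$, giving exactly $(\nabla F(\xb)^{-1}Q_i)^\circ=\{u\mv\nabla F(\xb)u\in Q_i\}^\circ=\nabla F(\xb)^TQ_i^\circ$; this is where the polyhedrality of the $Q_i$ (rather than a Slater-type condition) is doing the work, and it is the reason the partition cones were chosen polyhedral in the first place.

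The only computational point is (d), the identity \eqref{EqNecEqual}. Here I would use the product rule \eqref{EqPolarConeProd}, which reduces $Q_1^\circ\cap Q_2^\circ$ to the intersection of the polars block by block. On the $h$-block both polars are all of $\R^{m_E}$, intersecting to $\R^{m_E}=\widehat N_{\{0\}}(0)$; on the $g$-block both factors are $T_{\R^{m_I}_-}(g(\xb))$, whose polar is $\widehat N_{\R^{m_I}_-}(g(\xb))=\{v\mv v_i\ge0,\ i\in I^g,\ v_i=0,\ i\notin I^g\}$ (intersection with itself changes nothing); on a complementarity block with $i\in I^{0+}\cup I^{+0}$ both $Q_1,Q_2$ have the same factor $T_{D_C}(-G_i(\xb),-H_i(\xb))$, whose polar is exactly $\widehat N_{D_C}(-G_i(\xb),-H_i(\xb))$ as listed in the excerpt. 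The interesting block is $i\in I^{00}$: for $Q_1=Q^{\beta_1,\beta_2}_{CC}$ the factor is $\{0\}\times\R_-$ if $i\in\beta_1$ and $\R_-\times\{0\}$ if $i\in\beta_2$, while for $Q_2=Q^{\beta_2,\beta_1}_{CC}$ the roles of $\beta_1,\beta_2$ are swapped, so the two factors are always $\{0\}\times\R_-$ and $\R_-\times\{0\}$ in some order; their polars are $\R\times\R_+$ and $\R_+\times\R$, and $(\R\times\R_+)\cap(\R_+\times\R)=\R_+\times\R_+=\widehat N_{D_C}(-G_i(\xb),-H_i(\xb))$ for $i\in I^{00}$. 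Reassembling the blocks and comparing with the product formula for $\NrD=(\TD)^\circ$ obtained from \eqref{EqT_D_MPCC} via \eqref{EqPolarConeProd} yields $Q_1^\circ\cap Q_2^\circ=\NrD$, which is \eqref{EqNecEqual}.

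The main obstacle, such as it is, is purely bookkeeping: one must keep straight that passing from $(\beta_1,\beta_2)$ to $(\beta_2,\beta_1)$ interchanges the two branch choices on every biactive index, so that $Q_1$ and $Q_2$ together always ``cover'' both branches of $D_C$ — this is exactly what forces the biactive-block intersection of polars down to the single quadrant $\R_+\times\R_+$. No genuine analytic difficulty arises; polyhedrality removes any need for constraint qualifications in applying Proposition~\ref{PropPolarCones}.
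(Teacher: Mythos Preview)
Your proof is correct and follows essentially the same approach as the paper: polyhedrality of the $Q_i$ is noted, Proposition~\ref{PropPolarCones} is invoked for \eqref{EqCondQi}, and \eqref{EqPolarConeProd} is used block by block for \eqref{EqNecEqual}. The only cosmetic difference is that on the biactive blocks the paper first observes $\tau^{\beta_1,\beta_2}_i\cup\tau^{\beta_2,\beta_1}_i=D_C$ and then polarizes, whereas you polarize each branch first and intersect; since $(\tau_1\cup\tau_2)^\circ=\tau_1^\circ\cap\tau_2^\circ$, this is the same computation in reversed order.
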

\begin{proof}
  It is easy to see that both cones $Q_j$, $j=1,2$ are closed convex polyhedral cones fulfilling $Q_j\subset \TD$ and by using Proposition \ref{PropPolarCones} we conclude that $\big(\nabla F(\xb)^{-1}Q_j\big)^\circ=\nabla F(\xb)^T(Q_j)^\circ$. There remains to show that $(Q^{\beta_1,\beta_2}_{CC})^\circ\cap(Q^{\beta_2,\beta_1}_{CC})^\circ=\NrD$. Since for every $i\in I^{00}=\beta_1\cup\beta_2$ we have $\tau^{\beta_1,\beta_2}_i\cup \tau^{\beta_2,\beta_1}_i=  (\{0\}\times\R_-)\cup (\R_-\times\{0\})=D_C=T_{D_C}(-G_i(\xb), -H_i(\xb))$ and for every $i\in I^{0+}\cup I^{+0}$ we have $\tau^{\beta_1,\beta_2}_i\cup \tau^{\beta_2,\beta_1}_i=T_{D_C}(-G_i(\xb), -H_i(\xb))$ by the definition, we obtain from \eqref{EqPolarConeProd} that
  \begin{eqnarray*}(Q^{\beta_1,\beta_2}_{CC})^\circ\cap(Q^{\beta_2,\beta_1}_{CC})^\circ&=&\left(\{0\}^{m_E}\right)^\circ\times \left(T_{\R^{m_I}_-}(g(\xb))\right)^\circ\times\prod_{i=1}^{m_C}\left(\tau^{\beta_1,\beta_2}_i\cup \tau^{\beta_2,\beta_1}_i\right)^\circ\\
  &=&\left(\{0\}^{m_E}\right)^\circ\times \left(T_{\R^{m_I}_-}(g(\xb))\right)^\circ\times\prod_{i=1}^{m_C}\left(T_{D_C}(-G_i(\xb), -H_i(\xb))\right)^\circ=\NrD\end{eqnarray*}
  and the lemma is proved.
\end{proof}
It is easy to see that $\TD$ is the union taken over all partitions $(\beta_1,\beta_2)\in \Pa(I^{00})$ of the cones $Q^{\beta_1,\beta_2}_{CC}$ and  therefore
$\NrD=\bigcap_{(\beta_1,\beta_2)\in\Pa(I^{00})}\big(Q_{CC}^{\beta_1,\beta_2}\big)^\circ$. We have shown in Lemma \ref{LemBasPropMPCC} that this intersection of $2^{\vert I^{00}\vert}$ many polar cones can be replaced by the intersection of two polar cones $(Q^{\beta_1,\beta_2}_{CC})^\circ\cap(Q^{\beta_2,\beta_1}_{CC})^\circ$. Since
\[\Tlin(\xb)=\bigcup_{(\beta_1,\beta_2)\in\Pa(I^{00})}\nabla F(\xb)^{-1} Q_{CC}^{\beta_1,\beta_2}\]
and under the assumption of GGCQ
\[\widehat N_{\Omega}(\xb)=(\Tlin(\xb))^\circ=\bigcap_{(\beta_1,\beta_2)\in\Pa(I^{00})}\big(\nabla F(\xb)^{-1} Q_{CC}^{\beta_1,\beta_2}\big)^\circ=\bigcap_{(\beta_1,\beta_2)\in\Pa(I^{00})}\nabla F(\xb)^T \big(Q_{CC}^{\beta_1,\beta_2}\big)^\circ,\]
we expect that the replacement of the intersection of the $2^{\vert I^{00}\vert}$ many cones $\nabla F(\xb)^T \big(Q_{CC}^{\beta_1,\beta_2}\big)^\circ$ by the intersection $(Q^{\beta_1,\beta_2}_{CC})^\circ\cap(Q^{\beta_2,\beta_1}_{CC})^\circ$ of two cones can result in a tight inclusion which can be even exact under some reasonable assumptions.

Note that
\begin{eqnarray*}\nabla F(\xb)^{-1} Q_{CC}^{\beta_1,\beta_2}=\{u\in\R^n\mv \begin{array}[t]{ll}\nabla h(\xb)u= 0,& \nabla g_i(\xb)u\leq0,\ i\in I^g,\\
-\nabla G_i(\xb)u=0,\ i\in I^{0+}\cup\beta_1,& -\nabla H_i(\xb)u\leq 0,\ i\in\beta_1,\\
-\nabla G_i(\xb)u\leq 0,\ i\in \beta_2,&-\nabla H_i(\xb)u= 0,\ i\in I^{+0}\cup\beta_2\}.
\end{array}
\end{eqnarray*}

In the sequel we will use the  sets of multipliers
\begin{eqnarray*}
  \label{EqR_CC}{\cal R}_{CC}&:=&\{(\mu^h,\mu^g,\mu^G,\mu^H)\in\R^{m_E}\times \R^{m_I}\times\R^{m_C}\times\R^{m_C}\mv\\
  &&\qquad\mu_i^g=0,\ i\in\{1,\ldots,m_I\}\setminus I^g,\quad \mu^G_i=0,\ i\in I^{+0},\quad \mu^H_i=0,\ i\in I^{0+} \}
  \end{eqnarray*}
  and
  \begin{eqnarray*}
  \label{EqN_CC}{\cal N}_{CC}&:=&\ker F(\xb)^T\cap {\cal R}_{CC}\\
  \nonumber &=&\quad\{(\mu^h,\mu^g,\mu^G,\mu^H)\in{\cal R}_{CC}\mv\\
  &&\qquad
  \sum_{i=1}^{m_E}\mu_i^h\nabla h_i(\xb)+ \sum_{i=1}^{m_I}\mu_i^g\nabla g_i(\xb)-\sum_{i=1}^{m_C}(\mu_i^G\nabla G_i(\xb)+\mu_i^H\nabla H_i(\xb))=0 \}.
  \end{eqnarray*}
Note that
\begin{equation}\label{EqRegNormMPCC}\NrD=\{(\lambda^h,\lambda^g,\lambda^G,\lambda^H)\in {\cal R}_{CC}\mv \lambda_i^g\geq 0,\ i\in I^g,\ \lambda^G_i\geq 0,\lambda_i^H\geq 0,\ i\in I^{00}\}\end{equation}
and
\begin{equation}
  \label{EqLimNormMPCC}
  N_D(F(\xb))=\{(\lambda^h,\lambda^g,\lambda^G,\lambda^H)\in {\cal R}_{CC}\mv \lambda_i^g\geq 0,\ i\in I^g,\ \lambda^G_i>0,\lambda_i^H> 0\mbox{ or } \lambda^G_i\lambda^H_i=0,\ i\in I^{00}\}.
\end{equation}
We now apply Theorem \ref{ThQ} to estimate the regular normal cone $\widehat N_{\Omega_C}(\xb)$ of the MPCC \eqref{EqMPCC}.
\begin{proposition}
  \label{PropMPCC}
  Let $\xb$ belong to the feasible region  $\Omega_C$ of the MPCC \eqref{EqMPCC} and assume that GGCQ is fulfilled at $\xb$.  Then for every partition $(\beta_1,\beta_2)$ of the index set $I^{00}$ we have
  \begin{eqnarray}
    \nonumber\widehat N_{\Omega_C}(\xb)&\subset& \big\{\sum_{i=1}^{m_E}\lambda_i^h\nabla h_i(\xb)+ \sum_{i=1}^{m_I}\lambda_i^g\nabla g_i(\xb)-\sum_{i=1}^{m_C}(\lambda_i^G\nabla G_i(\xb)+\lambda_i^H\nabla H_i(\xb))\mv\\
    \label{EqInclN_MPCC}&&\qquad\qquad (\lambda^h,\lambda^g,\lambda^G,\lambda^H)\in \tilde N^{\beta_1,\beta_2}_{CC}\big\}=:M^{\beta_1,\beta_2}_{CC},
  \end{eqnarray}
  where
  \begin{eqnarray}
  \nonumber\tilde N^{\beta_1,\beta_2}_{CC}
  &:=&\{(\lambda^h,\lambda^g,\lambda^G,\lambda^H)\in{\cal R}_{CC}\mv
    \begin{array}[t]{ll}\exists (\mu^h,\mu^g,\mu^G,\mu^H)\in{\cal N}_{CC}:&
    \lambda^g_i\geq\max\{\mu^g_i,0\},\ i\in I^g,\\
    &\lambda^G_i\geq\mu^G_i,\lambda^H_i\geq 0,\ i\in\beta_1,\\
    &\lambda^G_i\geq 0,\lambda^H_i\geq\mu^H_i,\ i\in\beta_2\}
    \end{array}\\
  \label{EqN_beta_CC}    &=&(Q^{\beta_1,\beta_2}_{CC})^\circ\cap(\ker\nabla F(\xb)^T+(Q^{\beta_2,\beta_1}_{CC})^\circ).
  \end{eqnarray}
\end{proposition}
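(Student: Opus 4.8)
The plan is to apply Theorem~\ref{ThQ} directly with the pair $(Q_1,Q_2)=(Q^{\beta_1,\beta_2}_{CC},Q^{\beta_2,\beta_1}_{CC})$, which by Lemma~\ref{LemBasPropMPCC} indeed lies in the collection $\cal Q$. Since GGCQ is assumed, Theorem~\ref{ThQ} yields
\[\widehat N_{\Omega_C}(\xb)\subset \nabla F(\xb)^T\big((Q^{\beta_1,\beta_2}_{CC})^\circ\cap(\ker\nabla F(\xb)^T+(Q^{\beta_2,\beta_1}_{CC})^\circ)\big).\]
So the whole statement reduces to an exercise in bookkeeping: first, to identify $\nabla F(\xb)^T(\cdot)$ applied to the multiplier set with the set $M^{\beta_1,\beta_2}_{CC}$ written out via the gradients of $h,g,G,H$ (this is immediate from the block structure of $F$, with the signs on the $G,H$ blocks accounting for the minus signs in the definition of $F$); second, and this is the real content, to verify the two equalities in \eqref{EqN_beta_CC} — namely that $\tilde N^{\beta_1,\beta_2}_{CC}$ as defined by the explicit inequalities coincides with $(Q^{\beta_1,\beta_2}_{CC})^\circ\cap(\ker\nabla F(\xb)^T+(Q^{\beta_2,\beta_1}_{CC})^\circ)$.

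For the second equality I would proceed componentwise. The cone $Q^{\beta_1,\beta_2}_{CC}$ is a product, so by \eqref{EqPolarConeProd} its polar is the product of the polars of the factors: $(\{0\}^{m_E})^\circ=\R^{m_E}$, $(T_{\R^{m_I}_-}(g(\xb)))^\circ=\{v\mv v_i\ge 0,\ i\in I^g,\ v_i=0\text{ else}\}$, and for the complementarity blocks the polars of the one-dimensional rays and of $\{0\}\times\R$, $\R\times\{0\}$ are computed from the table of $\widehat N_{D_C}$ already given. This identifies $(Q^{\beta_1,\beta_2}_{CC})^\circ$ with $\{\lambda\in{\cal R}_{CC}\mv \lambda^g_i\ge 0,\ i\in I^g,\ \lambda^G_i\ge 0,\ i\in\beta_1\cup I^{0+},\ \lambda^H_i\ge 0,\ i\in\beta_2\cup I^{+0}\}$ (with the roles of $\beta_1,\beta_2$ swapped in $(Q^{\beta_2,\beta_1}_{CC})^\circ$). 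Then $\ker\nabla F(\xb)^T$ is precisely the set of $(\mu^h,\mu^g,\mu^G,\mu^H)$ satisfying the linear combination equation displayed in the definition of ${\cal N}_{CC}$ — but one must be careful: $\ker\nabla F(\xb)^T$ has no sign or support restrictions, so the intersection $\ker\nabla F(\xb)^T\cap(Q^{\beta_2,\beta_1}_{CC})^\circ$ forces $\mu\in{\cal R}_{CC}$ with the sign pattern of $(Q^{\beta_2,\beta_1}_{CC})^\circ$, i.e.\ exactly an element of ${\cal N}_{CC}$ with $\mu^g_i\ge 0$ on $I^g$, $\mu^H_i\ge 0$ on $\beta_1$, $\mu^G_i\ge 0$ on $\beta_2$. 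Writing $\lambda=\mu+\nu$ with $\nu\in(Q^{\beta_1,\beta_2}_{CC})^\circ$ and chasing the inequalities coordinate by coordinate then produces exactly the description of $\tilde N^{\beta_1,\beta_2}_{CC}$; conversely, given $\lambda\in\tilde N^{\beta_1,\beta_2}_{CC}$ with its witness $\mu\in{\cal N}_{CC}$, one checks that $\mu$ (after discarding the components on which it is automatically zero) lies in $\ker\nabla F(\xb)^T$ and $\lambda-\mu$ lies in $(Q^{\beta_1,\beta_2}_{CC})^\circ$, while $\lambda$ itself lies in $(Q^{\beta_1,\beta_2}_{CC})^\circ$ because $\lambda^g_i\ge\max\{\mu^g_i,0\}\ge 0$ etc.

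The first equality in \eqref{EqN_beta_CC} — that $\tilde N^{\beta_1,\beta_2}_{CC}\subset{\cal R}_{CC}$ with the stated inequalities equals that intersection — is really the same computation read in the other direction, so I would fold the two into a single double-inclusion argument. The main obstacle I anticipate is purely notational discipline: keeping straight which index block ($I^g$, $I^{0+}$, $I^{+0}$, $\beta_1$, $\beta_2$) carries which sign condition, and correctly tracking the minus signs coming from the $-G_i,-H_i$ entries of $F$ when translating between multipliers $\lambda^G,\lambda^H$ and elements of $(Q_j)^\circ$. There is no conceptual difficulty beyond Theorem~\ref{ThQ}, \eqref{EqPolarConeProd}, and the already-tabulated normal cones to $D_C$; it is the kind of step where one should lay out the coordinate conditions for each of the five index groups in a short table and then verify both inclusions mechanically. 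A convenient way to organize the write-up is to first record the explicit form of $(Q^{\beta_1,\beta_2}_{CC})^\circ$ as a displayed equation, then invoke Lemma~\ref{LemIntersect} in the form $\nabla F(\xb)^T(A\cap(\ker\nabla F(\xb)^T+B))=(\nabla F(\xb)^TA)\cap(\nabla F(\xb)^TB)$ only if needed for the outer equality, and otherwise simply substitute.
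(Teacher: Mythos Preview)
Your overall plan matches the paper's proof exactly: invoke Theorem~\ref{ThQ} with $(Q_1,Q_2)=(Q^{\beta_1,\beta_2}_{CC},Q^{\beta_2,\beta_1}_{CC})$ (legitimate by Lemma~\ref{LemBasPropMPCC}), and then verify \eqref{EqN_beta_CC} by a block-by-block case analysis of the decomposition $\lambda=\eta+\mu$ with $\eta\in(Q^{\beta_2,\beta_1}_{CC})^\circ$, $\mu\in\ker\nabla F(\xb)^T$. The paper runs through the six index groups $m_E$, $I^g$, $I^{0+}$, $I^{+0}$, $\beta_1$, $\beta_2$ in precisely this way.

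However, your proposal already contains the very bookkeeping errors you warned yourself about, and they would derail the write-up if carried forward. First, your description of $(Q^{\beta_1,\beta_2}_{CC})^\circ$ is wrong: for $i\in\beta_1$ one has $\tau^{\beta_1,\beta_2}_i=\{0\}\times\R_-$, so $(\tau^{\beta_1,\beta_2}_i)^\circ=\R\times\R_+$, i.e.\ $\lambda^G_i$ is free and $\lambda^H_i\ge 0$ --- not $\lambda^G_i\ge 0$ as you wrote; symmetrically for $i\in\beta_2$. For $i\in I^{0+}$ the polar is $\R\times\{0\}$ and for $i\in I^{+0}$ it is $\{0\}\times\R$; neither yields a nonnegativity condition beyond what ${\cal R}_{CC}$ already encodes. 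Second, you refer to ``the intersection $\ker\nabla F(\xb)^T\cap(Q^{\beta_2,\beta_1}_{CC})^\circ$'', but no such intersection appears: the structure is $\lambda\in(Q^{\beta_1,\beta_2}_{CC})^\circ$ \emph{and} $\lambda=\mu+\nu$ with $\mu\in\ker\nabla F(\xb)^T$, $\nu\in(Q^{\beta_2,\beta_1}_{CC})^\circ$. The support restrictions placing $\mu$ in ${\cal R}_{CC}$ are not assumed but are \emph{derived} from the two memberships $\lambda\in(Q^{\beta_1,\beta_2}_{CC})^\circ$ and $\lambda-\mu\in(Q^{\beta_2,\beta_1}_{CC})^\circ$ (e.g.\ for $i\notin I^g$ both force the $g$-component to vanish, hence $\mu^g_i=0$). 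Third, in both the forward decomposition and the converse you place $\nu$ (resp.\ $\lambda-\mu$) in $(Q^{\beta_1,\beta_2}_{CC})^\circ$; it must be $(Q^{\beta_2,\beta_1}_{CC})^\circ$. Once these swaps are corrected, the componentwise verification you outline goes through and reproduces the paper's argument.
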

\begin{proof}We apply \eqref{EqInclRegNormalConeQ1_Q2} with $(Q_1,Q_2)=(Q^{\beta_1,\beta_2}_{CC},Q^{\beta_2,\beta_1}_{CC})$. All we have to show is the equation \eqref{EqN_beta_CC}.
Obviously we have $(Q^{\beta_1,\beta_2}_{CC})^\circ=\R^{m_E}\times N_{\R^{m_I}_-}(g(\xb))\times\prod_{i=1}^{m_C}(\tau^{\beta_1,\beta_2}_{i})^\circ$ and the set $(Q^{\beta_1,\beta_2}_{CC})^\circ\cap(\ker\nabla F(\xb)^T+(Q^{\beta_2,\beta_1}_{CC})^\circ)$ consists of all $\lambda=(\lambda^h,\lambda^g, \lambda^G,\lambda^H)$ such that there exists $\eta=(\eta^h,\eta^g,\eta^G,\eta^H)\in (Q^{\beta_2,\beta_1}_{CC})^\circ$ and some $\mu=(\mu^h,\mu^g,\mu^G,\mu^H)\in\ker\nabla F(\xb)^T$ such that
\begin{eqnarray*}
  \lambda=\eta +\mu\in (Q^{\beta_1,\beta_2}_{CC})^\circ.
\end{eqnarray*}
We proceed with an analysis of the different cases:
\begin{enumerate}
\item{Equality constraints:} We obtain $\lambda^h=\eta^h+\mu^h\in\R^{m_E}$, $\mu^h\in\R^{m_E}$, $\eta^h\in\R^{m_E}$,
i.e., $\lambda^h,\mu^h\in\R^{m_E}$.
\item{Inequality constraints:} For $i\in I^g$ we have $\lambda^g_i=\eta^g_i+ \mu^g_i\geq 0$,  $\eta^g_i\geq 0$ or equivalently $\lambda^g_i\geq\max\{0,\mu^g_i\}$, whereas for $i\in\{1,\ldots,m_I\}\setminus I^g$ we obtain $\lambda^g_i=\eta^g_i=0$ which yields $\mu^g_i=0$.
\item{$i\in I^{0+}$:} Since $(\tau^{\beta_1,\beta_2}_i)^\circ=(\tau^{\beta_2,\beta_1}_i)^\circ=\R\times\{0\}$, we obtain $\lambda^H_i=\eta^H_i=0$ and consequently also $\mu^H_i=0$.
\item{$i\in I^{+0}$:} Similarly as in the previous case we obtain $\lambda^G_i=\mu^G_i=0$.
\item{$i\in\beta_1$:} Since $(\tau^{\beta_1,\beta_2}_i)^\circ=\R\times \R_+$, $(\tau^{\beta_2,\beta_1}_i)^\circ=\R_+\times\R$ we have
\[(\lambda^G_i,\lambda^H_i)=(\eta^G_i,\eta^H_i)+(\mu^G_i, \mu^H_i)\in\R\times\R_+,\]
and $(\eta^G_i,\eta^H_i)\in\R_+\times\R$. This can be written equivalently as $\lambda^G_i\geq\mu^G_i$, $\lambda^H_i\geq 0$.
\item{$i\in\beta_2$:} Similarly as in the previous case we obtain $\lambda^G_i\geq 0$, $\lambda^H_i\geq\mu^H_i$.
\end{enumerate}
We see that $\tilde N^{\beta_1,\beta_2}_{CC}=(Q^{\beta_1,\beta_2}_{CC})^\circ\cap(\ker\nabla F(\xb)^T+(Q^{\beta_2,\beta_1}_{CC})^\circ)$ and the claimed result follows from \eqref{EqInclRegNormalConeQ1_Q2}.
\end{proof}

\begin{theorem}\label{ThMPCCRegNormalCone}
    Let $\xb$ belong to the feasible region  $\Omega_C$ of the MPCC \eqref{EqMPCC} and assume that GGCQ is fulfilled at $\xb$. Further assume that there is some partition $(\beta_1,\beta_2)$ of the index set $I^{00}$ such that
    for every $\mu\in {\cal N}_{CC}$ we have
    \begin{eqnarray*}&&\mu_i^G\mu_{i'}^G\geq 0, \mu_i^H\mu_{i'}^H\geq 0\ \forall (i,i')\in\beta_1\times\beta_2,\\
     &&\mu_i^G\mu_{i'}^H\geq 0\ \forall (i,i')\in\beta_1\times\beta_1,\\
     &&\mu_i^G\mu_{i'}^H\geq 0\ \forall (i,i')\in\beta_2\times\beta_2.
    \end{eqnarray*}
    Then
    \begin{eqnarray*}\widehat N_{\Omega_C}(\xb)&=&M^{\beta_1,\beta_2}_{CC}=\nabla F(\xb)^T\NrD.
    \end{eqnarray*}
\end{theorem}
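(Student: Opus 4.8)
The plan is to derive both equalities from the sufficient condition \eqref{EqEqualityBetaStat} of Theorem \ref{ThQ}, specialised to the admissible pair $(Q_1,Q_2)=(Q^{\beta_1,\beta_2}_{CC},Q^{\beta_2,\beta_1}_{CC})$ (admissible by Lemma \ref{LemBasPropMPCC}). For this pair Proposition \ref{PropMPCC} already yields $\widehat N_{\Omega_C}(\xb)\subset M^{\beta_1,\beta_2}_{CC}=\nabla F(\xb)^T\tilde N^{\beta_1,\beta_2}_{CC}$ and, via \eqref{EqN_beta_CC}, identifies $\tilde N^{\beta_1,\beta_2}_{CC}$ with $(Q^{\beta_1,\beta_2}_{CC})^\circ\cap\big(\ker\nabla F(\xb)^T+(Q^{\beta_2,\beta_1}_{CC})^\circ\big)$, so condition \eqref{EqEqualityBetaStat} here reads exactly $M^{\beta_1,\beta_2}_{CC}\subset\nabla F(\xb)^T\NrD$. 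Since $\nabla F(\xb)^T\NrD\subset\widehat N_{\Omega_C}(\xb)$ always holds by Theorem \ref{ThInclNormalCone}, proving this single inclusion collapses the chain $\nabla F(\xb)^T\NrD\subset\widehat N_{\Omega_C}(\xb)\subset M^{\beta_1,\beta_2}_{CC}\subset\nabla F(\xb)^T\NrD$ and gives both asserted identities at once.

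To prove $M^{\beta_1,\beta_2}_{CC}\subset\nabla F(\xb)^T\NrD$, I would fix $\lambda=(\lambda^h,\lambda^g,\lambda^G,\lambda^H)\in\tilde N^{\beta_1,\beta_2}_{CC}$ together with a witnessing multiplier $\mu=(\mu^h,\mu^g,\mu^G,\mu^H)\in{\cal N}_{CC}$ supplied by Proposition \ref{PropMPCC}, so that $\lambda^g_i\geq\max\{\mu^g_i,0\}$ ($i\in I^g$), $\lambda^G_i\geq\mu^G_i$ and $\lambda^H_i\geq 0$ ($i\in\beta_1$), and $\lambda^G_i\geq 0$ and $\lambda^H_i\geq\mu^H_i$ ($i\in\beta_2$). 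Because $\mu\in{\cal N}_{CC}\subset\ker\nabla F(\xb)^T$, one has $\nabla F(\xb)^T(\lambda-\mu)=\nabla F(\xb)^T\lambda$, so the whole inclusion reduces to showing $\lambda-\mu\in\NrD$ using the explicit form \eqref{EqRegNormMPCC}, and this is the step where the hypotheses enter. If $\lambda^G_i\geq 0$ for every $i\in\beta_1$ and $\lambda^H_i\geq 0$ for every $i\in\beta_2$, then already $\lambda\in\NrD$ (its remaining sign requirements are built into $\tilde N^{\beta_1,\beta_2}_{CC}$), and we are done. Otherwise, suppose $\lambda^G_{i_0}<0$ for some $i_0\in\beta_1$; then $\mu^G_{i_0}\leq\lambda^G_{i_0}<0$, and the assumed inequalities $\mu^G_{i_0}\mu^H_{i'}\geq 0$ ($i'\in\beta_1$) and $\mu^G_{i_0}\mu^G_{i'}\geq 0$ ($i'\in\beta_2$) force $\mu^H_{i'}\leq 0$ for all $i'\in\beta_1$ and $\mu^G_{i'}\leq 0$ for all $i'\in\beta_2$. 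The mirror case $\lambda^H_{i_0}<0$ with $i_0\in\beta_2$ is treated identically using $\mu^G_{i'}\mu^H_{i_0}\geq 0$ and $\mu^H_{i'}\mu^H_{i_0}\geq 0$. In either situation the off-diagonal block of $\mu$ is thus nonpositive, and a component-by-component inspection against \eqref{EqRegNormMPCC} confirms $\lambda-\mu\in\NrD$: the $I^g$-components are $\geq 0$ by $\lambda^g_i\geq\max\{\mu^g_i,0\}$; for $i\in\beta_1$ one has $(\lambda-\mu)^G_i\geq 0$ by $\lambda^G_i\geq\mu^G_i$ and $(\lambda-\mu)^H_i\geq 0$ by $\lambda^H_i\geq 0\geq\mu^H_i$; for $i\in\beta_2$ symmetrically; and on $I^{0+}$, $I^{+0}$ and the $h$-block $\NrD$ demands nothing beyond membership in ${\cal R}_{CC}$, which $\lambda-\mu$ inherits. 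Hence $\nabla F(\xb)^T\lambda=\nabla F(\xb)^T(\lambda-\mu)\in\nabla F(\xb)^T\NrD$.

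The main obstacle — and the only place where the three groups of sign conditions on ${\cal N}_{CC}$ are genuinely used — is the propagation step: a single strictly negative off-diagonal entry of the witness $\mu$, which is itself forced by a strictly negative off-diagonal entry of $\lambda$ (the only obstruction to $\lambda\in\NrD$), drags the whole off-diagonal block $(\mu^H_i)_{i\in\beta_1},(\mu^G_i)_{i\in\beta_2}$ down to $\leq 0$ through the pairwise products, after which $\lambda-\mu$ automatically lands in $\NrD$. I expect the remaining bookkeeping — verifying each of the six component groups and checking that the ``already in $\NrD$'' case really covers all $\lambda$ with no negative off-diagonal entry, together with the routine observation that the witness furnished by Proposition \ref{PropMPCC} indeed lies in ${\cal N}_{CC}$ so the hypotheses apply to it — to be straightforward.
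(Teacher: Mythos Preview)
Your proposal is correct and follows essentially the same approach as the paper's own proof: reduce via Theorem~\ref{ThQ} and Proposition~\ref{PropMPCC} to the inclusion $M^{\beta_1,\beta_2}_{CC}\subset\nabla F(\xb)^T\NrD$, then for $\lambda\in\tilde N^{\beta_1,\beta_2}_{CC}$ either observe $\lambda\in\NrD$ directly, or use a strictly negative off-diagonal entry of $\lambda$ to force the corresponding entry of the witness $\mu$ negative, propagate the sign through the product hypotheses, and conclude $\lambda-\mu\in\NrD$. The only cosmetic slip is the sentence ``the whole inclusion reduces to showing $\lambda-\mu\in\NrD$'': in the first case you (correctly) use $\lambda$ itself rather than $\lambda-\mu$, so that reduction is not literally what you do, but the argument as written is sound.
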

\begin{proof}Due to \eqref{EqN_beta_CC}, \eqref{EqInclN_MPCC} and Theorem \ref{ThQ} we only have to show that \eqref{EqEqualityBetaStat}, i.e.
 \[M^{\beta_1,\beta_2}_{CC}\subset \nabla F(\xb)^T\NrD,\]
 holds.
Consider $x^\ast\in M^{\beta_1,\beta_2}_{CC}$. Then we have the representation
\begin{eqnarray*}x^\ast &=&\sum_{i=1}^{m_E}\lambda_i^h\nabla h_i(\xb)+ \sum_{i=1}^{m_I}\lambda_i^g\nabla g_i(\xb)-\sum_{i=1}^{m_C}(\lambda_i^G\nabla G_i(\xb)+\lambda_i^H\nabla H_i(\xb))
\end{eqnarray*}
with $(\lambda^h,\lambda^g,\lambda^G,\lambda^H)\in \tilde N^{\beta_1,\beta_2}_{CC}$.
If $\lambda^G_i\geq 0$ for every $i\in \beta_1$ and $\lambda^H_i\geq 0$ for every $i\in \beta_2$, then the claimed inclusion $x^\ast\in  \nabla F(\xb)^T\NrD$ follows from \eqref{EqRegNormMPCC}. Otherwise, either there is some $j\in\beta_1$ such that $\lambda^G_j< 0$ or some $j\in\beta_2$ such that $\lambda^H_j<0$. We consider first the case when $\lambda^G_j< 0$ for some $j\in\beta_1$. Take the element $(\mu^h,\mu^g,\mu^G,\mu^H)\in{\cal N}_{CC}$ associated with $(\lambda^h,\lambda^g,\lambda^G,\lambda^H)$ according to \eqref{EqN_beta_CC} and set $(\tilde\lambda^h,\tilde\lambda^g,\tilde\lambda^G,\tilde\lambda^H):=(\lambda^h-\mu^h,\lambda^g-\mu^g,\lambda^G-\mu^G,\lambda^H-\mu^H)$.
Then
\begin{eqnarray*}x^\ast &=&\sum_{i=1}^{m_E}\tilde\lambda_i^h\nabla h_i(\xb)+ \sum_{i=1}^{m_I}\tilde\lambda_i^g\nabla g_i(\xb)-\sum_{i=1}^{m_C}(\tilde\lambda_i^G\nabla G_i(\xb)+\tilde\lambda_i^H\nabla H_i(\xb))
\end{eqnarray*}
and
\begin{eqnarray*}
 \tilde\lambda^g_i\geq 0,\ i\in I^g,\ \tilde\lambda^G_i\geq 0,\ i\in\beta_1,\quad \tilde\lambda^H_i\geq 0,\ i\in\beta_2
\end{eqnarray*}
by virtue of \eqref{EqN_beta_CC}. Further, since $0>\lambda^G_j\geq\mu^G_j$ we deduce by the assumptions of the theorem that
$\mu^G_i\leq 0\ \forall i\in \beta_2$, $\mu^H_i\leq 0\ \forall i\in \beta_1$ and consequently
$\tilde\lambda^G_i=\lambda^G_i-\mu^G_i\geq \lambda^G_i\geq 0\ \forall i\in \beta_2$, $\tilde\lambda^H_i=\lambda^H_i-\mu^H_i\geq \lambda^H_i\geq 0\ \forall i\in \beta_1$. Therefore $\tilde\lambda^G_i\geq 0$ and $\tilde\lambda^H_i\geq 0$ holds for every $i\in\beta_1\cup\beta_2$ and
$x^\ast=\nabla F(\xb)^T\NrD$ follows. Similar arguments can be applied in the alternative situation when there exists some $j\in \beta_2$ with $\lambda^H_j<0$.
\end{proof}
Let us compare our approach with the results of Pang and Fukushima \cite{PaFu99}. In \cite{PaFu99} the authors try to detect certain redundancies in the description of the linearized tangent cone and then analyze an equivalent representation of the linearized cone. In this paper we treat only so-called (non)singular inequalities, a more general approach goes beyond the scope of this work.

Given a linear system
\[Ax\leq b,\quad Cx=d\]
an inequality $a_ix \leq b_i$ is said to be {\em nonsingular} if there exists a feasible solution of
this system which satisfies this inequality strictly. Here $a_i$ denotes the i-th row of
the matrix $A$. An inequality is called {\em singular} if it is not nonsingular.

Let us denote by $T^{\rm lin}_{\Omega_C,R}(\xb)$ the set of all $u$ fulfilling the linear system
\begin{eqnarray}\nonumber&&\nabla h(\xb)u= 0,\\
\nonumber &&\nabla g_i(\xb)u\leq 0,\ i\in I^g\\
\label{EqRelLinTanConeMPCC} &&-\nabla G_i(\xb)u=0,\ i\in I^{0+}\\
\nonumber &&-\nabla H_i(\xb)u= 0,\ i\in I^{+0},\\
\nonumber && 0\geq-\nabla G_i(\xb)u,\  -\nabla H_i(\xb)u\leq 0,\ i\in I^{00}.
 \end{eqnarray}
 which is obtained from \eqref{EqLinTanConeMPCC} by relaxing the complementarity condition. Obviously we have $T^{\rm lin}_{\Omega_C}(\xb)\subset T^{\rm lin}_{\Omega_C,R}(\xb)$.

Now let $\beta^G$ denote the set
consisting of all indices $i\in I^{00}$ such that the inequality $-\nabla G_i(\xb)u\leq 0$ is nonsingular in
the system \eqref{EqRelLinTanConeMPCC}. Similarly, we denote by $\beta^H$ the nonsingular set pertaining
to the inequalities $-\nabla H_i(\xb)u\leq 0$. For notational convenience we introduce also the set $\beta^{GH}:=\beta^G\cap\beta^H$.

Using the set $\beta^{GH}$ we arrive at the following description of the linearized cone:
\begin{equation}\label{EqRedLinTanConeMPCC}T^{\rm lin}_{\Omega_C}(\xb)=\{u\in\R^n\mv\begin{array}[t]{l}\nabla h(\xb)u= 0,\\
\nabla g_i(\xb)u\leq 0,\ i\in I^g,\\
-\nabla G_i(\xb)u=0,\ i\in I^{0+},\\
-\nabla H_i(\xb)u= 0,\ i\in I^{+0},\\
0\geq-\nabla G_i(\xb)u,\  -\nabla H_i(\xb)u\leq 0,\ i\in I^{00}\setminus\beta^{GH},\\
0\geq-\nabla G_i(\xb)u\perp  -\nabla H_i(\xb)u\leq 0,\ i\in \beta^{GH}\}.
\end{array}
\end{equation}
This can be seen from the fact that every $u$ belonging to the set on the right hand side of \eqref{EqRedLinTanConeMPCC} also belongs to $T^{\rm lin}_{\Omega_C,R}(\xb)$ and therefore for every $i\in
I^{00}\setminus\beta^{GH}=(I^{00}\setminus \beta^G)\cup(I^{00}\setminus \beta^H)$ either the inequality $-\nabla G_i(\xb)u\leq 0$ or the inequality $-\nabla H_i(\xb)u\leq 0$ is singular and consequently fulfilled with equality, implying that complementarity holds. Now the representation \eqref{EqRedLinTanConeMPCC} of the linearized cone  has the same structure as the original representation \eqref{EqLinTanConeMPCC} and we can apply Theorem \ref{ThMPCCRegNormalCone} to \eqref{EqRedLinTanConeMPCC} in order to obtain the following corollary.

\begin{corollary}\label{CorMPCCPang}
    Let $\xb$ belong to the feasible region  $\Omega_C$ of the MPCC \eqref{EqMPCC} and assume that GGCQ is fulfilled at $\xb$. Further assume that there is some partition $(\beta^{GH}_1,\beta^{GH}_2)$ of the index set $\beta^{GH}$ such that for every $\mu\in {\cal N}_{CC}$ there holds
    \begin{eqnarray}
\nonumber &&\mu_i^G\mu_{i'}^G\geq 0, \mu_i^H\mu_{i'}^H\geq 0\ \forall (i,i')\in\beta^{GH}_1\times\beta^{GH}_2,\\
\label{EqWeakenedA3}&&\mu_i^G\mu_{i'}^H\geq 0\ \forall (i,i')\in\beta^{GH}_1\times\beta^{GH}_1,\\
\nonumber&&\mu_i^G\mu_{i'}^H\geq 0\ \forall (i,i')\in\beta^{GH}_2\times\beta^{GH}_2.
    \end{eqnarray}
    Then
    \begin{eqnarray*}\widehat N_{\Omega_C}(\xb)&=&\nabla F(\xb)^T\NrD.
    \end{eqnarray*}
\end{corollary}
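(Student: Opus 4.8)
The plan is to reduce Corollary~\ref{CorMPCCPang} to Theorem~\ref{ThMPCCRegNormalCone} by applying the latter not to the original constraint system but to the ``relaxed'' system defining $T^{\rm lin}_{\Omega_C}(\xb)$ via \eqref{EqRedLinTanConeMPCC}. The key observation, already stated in the text preceding the corollary, is that \eqref{EqRedLinTanConeMPCC} has exactly the same structural form as the defining system \eqref{EqLinTanConeMPCC} of the linearized cone of an MPCC, except that the biactive index set $I^{00}$ is replaced by the smaller set $\beta^{GH}$, and the complementarity relations on $I^{00}\setminus\beta^{GH}$ have been dropped (they are automatically implied since on that set one of the two inequalities is singular, hence an equality). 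So I would first make this reformulation precise: introduce the mapping $\tilde F$ and the set $\tilde D$ corresponding to the system \eqref{EqRedLinTanConeMPCC} --- i.e. on the indices in $\beta^{GH}$ one keeps the full $D_C$ factor, while on $I^{00}\setminus\beta^{GH}$ one uses the orthant $\R^2_-$ instead of $D_C$ --- so that $\{u\mv \nabla\tilde F(\xb)u\in T_{\tilde D}(\tilde F(\xb))\}=T^{\rm lin}_{\Omega_C}(\xb)$.

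Next I would verify the hypotheses of Theorem~\ref{ThMPCCRegNormalCone} for this reformulated system. The GGCQ hypothesis transfers directly: since the linearized cones coincide, $(\Tlin(\xb))^\circ=\widehat N_{\Omega_C}(\xb)$ still holds, so GGCQ for the reformulated system at $\xb$ is exactly GGCQ for the MPCC. The cone $\cal N_{CC}$ built from $\tilde F$ coincides with the original $\cal N_{CC}$, because on $I^{00}\setminus\beta^{GH}$ one of $\mu^G_i,\mu^H_i$ is forced to be zero in either description --- so the index restrictions defining $\cal R_{CC}$ are (at least) as tight in the reformulated system, and in fact the kernel condition is the same linear system. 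Then the biactive-type set for the reformulated system is precisely $\beta^{GH}$, and the assumed partition $(\beta^{GH}_1,\beta^{GH}_2)$ together with the sign conditions \eqref{EqWeakenedA3} are exactly condition of Theorem~\ref{ThMPCCRegNormalCone} applied to $\tilde F,\tilde D$. Hence Theorem~\ref{ThMPCCRegNormalCone} yields $\widehat N_{\Omega_C}(\xb)=\nabla\tilde F(\xb)^T\widehat N_{\tilde D}(\tilde F(\xb))$.

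The remaining step is to identify $\nabla\tilde F(\xb)^T\widehat N_{\tilde D}(\tilde F(\xb))$ with $\nabla F(\xb)^T\NrD$. Here one uses that on the indices $i\in I^{00}\setminus\beta^{GH}$ the regular normal cone to the orthant $\R^2_-$ at the origin is $\R_+\times\R_+$, which coincides with $\widehat N_{D_C}(0,0)$; on all other indices $\tilde D$ and $D$ agree. Thus $\widehat N_{\tilde D}(\tilde F(\xb))=\NrD$ as subsets of the same product space, and since the gradient blocks of $\tilde F$ are a sub-collection of those of $F$ (the dropped rows carry zero multipliers in $\NrD$ anyway --- or rather the same rows reappear), the images under $\nabla\tilde F(\xb)^T$ and $\nabla F(\xb)^T$ coincide. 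Combining the three steps gives the claimed equality.

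The main obstacle I anticipate is the bookkeeping in the second step: one must check carefully that the set $\cal N_{CC}$ and the index restrictions do not change when passing to the reformulated system, in particular that dropping the complementarity on $I^{00}\setminus\beta^{GH}$ (and relaxing $D_C$ to $\R^2_-$ there) genuinely leaves $\Tlin(\xb)$, $\NrD$, and $\cal N_{CC}$ invariant; the singularity argument from \eqref{EqRelLinTanConeMPCC} is what underpins this, and it needs to be invoked precisely. Everything else is a direct transcription of Theorem~\ref{ThMPCCRegNormalCone}.
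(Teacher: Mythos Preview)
Your approach is essentially the same as the paper's: reduce to Theorem~\ref{ThMPCCRegNormalCone} via the representation \eqref{EqRedLinTanConeMPCC}, with $\beta^{GH}$ playing the role of $I^{00}$, and then observe that the regular normal cone to the relaxed target set coincides with $\NrD$ because $\widehat N_{\R^2_-}(0,0)=\R_+^2=\widehat N_{D_C}(0,0)$.

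The paper's execution is a little cleaner than yours: there is no need to introduce a new mapping $\tilde F$ at all. One keeps the \emph{same} Jacobian $\nabla F(\xb)$ and simply replaces the target cone $T_D(F(\xb))$ by the enlarged polyhedral cone
\[
T^{GH}=\{0\}^{m_E}\times T_{\R^{m_I}_-}(g(\xb))\times\prod_{i=1}^{m_C}\tilde T_i^{GH},\qquad
\tilde T_i^{GH}=\begin{cases}\R_-^2,&i\in I^{00}\setminus\beta^{GH},\\ T_{D_C}(-G_i(\xb),-H_i(\xb)),&\text{else},\end{cases}
\]
so that $T^{\rm lin}_{\Omega_C}(\xb)=\{u\mv \nabla F(\xb)u\in T^{GH}\}$. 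Then Theorem~\ref{ThMPCCRegNormalCone} applies verbatim and gives $\widehat N_{\Omega_C}(\xb)=\nabla F(\xb)^T(T^{GH})^\circ$, and $(T^{GH})^\circ=\NrD$ is immediate.

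One small correction to your bookkeeping: your stated reason why ${\cal N}_{CC}$ is unchanged (``one of $\mu_i^G,\mu_i^H$ is forced to be zero in either description'') is not the right one. The set ${\cal R}_{CC}$ imposes \emph{no} restriction on $\mu_i^G,\mu_i^H$ for $i\in I^{00}$; hence relaxing $D_C$ to $\R_-^2$ on $I^{00}\setminus\beta^{GH}$ leaves ${\cal R}_{CC}$, and therefore ${\cal N}_{CC}=\ker\nabla F(\xb)^T\cap{\cal R}_{CC}$, literally unchanged. This resolves the ``main obstacle'' you anticipated without further work.
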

 \begin{proof}The representation \eqref{EqRedLinTanConeMPCC} has the form $T^{\rm lin}_{\Omega_C}(\xb)=\{u\in\R^n\mv\nabla F(\xb)u\in T^{GH}\}$ with
\[T^{GH}=\{0\}^{m_E}\times T_{\R^{m_I}_-}(g(\xb))\times\prod_{i=1}^{m_C} \tilde T^{GH}_i,\ \tilde T^{GH}_i=\begin{cases}
  \R^2_-&\mbox{if $i\in I^{00}\setminus\beta^{GH}$,}\\
  T_{D_C}(-G_i(\xb), -H_i(\xb))&\mbox{if $i\in I^{0+}\cup I^{+0}\cup\beta^{GH}$}
\end{cases}\]
and from Theorem \ref{ThMPCCRegNormalCone} we obtain $\widehat N_{\Omega_C}(\xb)=\nabla F(\xb)^T(T^{GH})^\circ$. It is easy to see that $(T^{GH})^\circ=(\TD)^\circ=\NrD$ and thus the assertion follows.
\end{proof}

The statement of Corollary \ref{CorMPCCPang} was shown in \cite[Theorem 2]{PaFu99} under the assumption (A3), which reads in our notation that there exists a partition $(\beta^{GH}_1,\beta^{GH}_2)$ of the index set $\beta^{GH}$ such that for every $\mu\in{\cal N}_{CC}$ one has
    \begin{eqnarray}
    \nonumber &&\mu_i^G\mu_{i'}^G\geq 0\ \forall (i,i')\in\beta^{GH}_1\times(\beta^G\setminus\beta^{GH}_1),\\
    \label{EqA3}     &&\mu_i^G\mu_{i'}^H\geq 0\  \forall (i,i')\in\beta^{GH}_1\times(\beta^H\setminus\beta^{GH}_2),\\
    \nonumber    &&\mu_i^H\mu_{i'}^H\geq 0\  \forall (i,i')\in\beta^{GH}_2\times(\beta^H\setminus\beta^{GH}_2),\\
    \nonumber     &&\mu_i^G\mu_{i'}^H\geq 0\ \forall (i,i')\in(\beta^G\setminus\beta^{GH}_1)\times\beta^{GH}_2.
    \end{eqnarray}
    Since $\beta^{GH}_2=\beta^{GH}\setminus\beta^{GH}_1\subset\beta^G\setminus \beta^{GH}_1$ and $\beta^{GH}_1\subset\beta^H\setminus \beta^{GH}_2$, our assumption \eqref{EqWeakenedA3} is not stronger than assumption (A3) used by Pang and Fukushima \cite{PaFu99}. In case when $\beta^G\not=\beta^{GH}$ or $\beta^H\not=\beta^{GH}$ our assumption \eqref{EqWeakenedA3} is actually weaker, as the following example demonstrates.
\begin{example}
  Consider the system
  \begin{eqnarray*}
    &&g_1(x):=-x_3-x_4\leq 0,\\
    && g_2(x):=x_2\leq 0,\\
    &&0\leq G_1(x):=x_1\perp H_1(x):=x_2\geq 0,\\
    &&0\leq G_2(x):=x_1+x_3\perp H_2(x):=x_4\geq 0
  \end{eqnarray*}
  at $\xb=(0,0,0,0)$.  Since all constraint functions are linear, GACQ is fulfilled, cf. also \cite[Theorem 3.2]{FleKan05}, and consequently GGCQ  holds as well.
  It is easy to see that $\beta^G=\{1,2\}$ and $\beta^{GH}=\beta^H=\{2\}$ and therefore condition \eqref{EqWeakenedA3} amounts to
  \begin{eqnarray}\label{EqEx1Aux1}&&-\mu^G_1-\mu^G_2=0,\ \mu^g_2-\mu^H_1=0,\ -\mu^g_1-\mu^G_2=0,\ -\mu^g_1-\mu^H_2=0\\
  \label{EqEx1Aux2}     &\Rightarrow&\left\{\begin{array}
    {ll}\mu_i^G\mu_{i'}^G\geq 0, \mu_i^H\mu_{i'}^H\geq 0& \forall (i,i')\in\beta^{GH}_1\times\beta^{GH}_2\\
     \mu_i^G\mu_{i'}^H\geq 0& \forall (i,i')\in\beta^{GH}_1\times\beta^{GH}_1\\
     \mu_i^G\mu_{i'}^H\geq 0& \forall (i,i')\in\beta^{GH}_2\times\beta^{GH}_2
  \end{array}\right.
    \end{eqnarray}
  Since \eqref{EqEx1Aux1} is equivalent to $\mu^H_1=\mu^g_2$, $\mu^H_2=\mu^G_2=-\mu^G_1=-\mu^g_1$, \eqref{EqEx1Aux2} holds with any of the two partitions $\beta^{GH}_1=\{2\},\beta^{GH}_2=\emptyset$ and $\beta^{GH}_1=\emptyset,\beta^{GH}_2=\{2\}$ and therefore Corollary \ref{CorMPCCPang} is applicable. On the other hand, condition \eqref{EqA3} reads
  as
    \begin{eqnarray}
\nonumber      &&-\mu^G_1-\mu^G_2=0,\ \mu^g_2-\mu^H_1=0,\ -\mu^g_1-\mu^G_2=0,\ -\mu^g_1-\mu^H_2=0\\
\label{EqEx1A3} &\Rightarrow&\left\{\begin{array}{ll}\mu_i^G\mu_{i'}^G\geq 0&\forall (i,i')\in\beta^{GH}_1\times(\{1,2\}\setminus\beta^{GH}_1)\\
     \mu_i^G\mu_{i'}^H\geq 0& \forall (i,i')\in\beta^{GH}_1\times(\{2\}\setminus\beta^{GH}_2)\\
     \mu_i^H\mu_{i'}^H\geq 0& \forall (i,i')\in\beta^{GH}_2\times(\{2\}\setminus\beta^{GH}_2)\\
     \mu_i^G\mu_{i'}^H\geq 0& \forall (i,i')\in(\{1,2\}\setminus\beta^{GH}_1)\times\beta^{GH}_2\end{array}\right.
    \end{eqnarray}
    Taking $(\mu^g_1,\mu^g_2,\mu^G_1,\mu^G_2,\mu^H_1,\mu^H_2)=(1,1,1,-1,1-1)$ we obtain that for the partition $\beta^{GH}_1=\emptyset, \beta^{GH}_2=\{2\}$ the condition $\mu_1^G\mu_2^H\geq 0$ is violated, whereas in case when $\beta^{GH}_1=\{2\}, \beta^{GH}_2=\emptyset$ the inequality $\mu_2^G\mu_1^G\geq0$ fails to hold. Thus \cite[Assumption (A3)]{PaFu99} does not hold for this example and therefore the assumption used in our Corollary \ref{CorMPCCPang} is strictly weaker.
\end{example}
%

We introduce now the following stationarity concepts for MPCC  which correspond to Definition \ref{DefQStat} with $\Q=\Q_{CC}$, where
\[\Q_{CC}:=\{(Q^{\beta_1,\beta_2}_{CC},Q^{\beta_2,\beta_1}_{CC})\mv (\beta_1,\beta_2)\mbox{ is partition of }I^{00}\}.\]
Note that there is a one-to-one correspondence between the sets $(Q_1,Q_2)\in\Q_{CC}$ and partitions $(\beta_1,\beta_2)$ of the biactive index set $I^{00}$
\begin{definition}Let $\xb\in\Omega_C$.
\begin{enumerate}
\item   We say that $\xb$ is {\em $\Q$-stationary for the MPCC \eqref{EqMPCC} with respect to the partition $(\beta_1,\beta_2)$ of the index set $I^{00}$} if
  \[0\in \nabla f(\xb)+M^{\beta_1,\beta_2}_{CC},\]
  where $M^{\beta_1,\beta_2}_{CC}$ is given by \eqref{EqInclN_MPCC}.
\item  We say that $\xb$ is {\em $\Q$-stationary for the MPCC \eqref{EqMPCC} } if it is $\Q$-stationary with respect to some  partition $(\beta_1,\beta_2)$ of the index set $I^{00}$.
\item We say that $\xb$ is {\em $\Q_M$-stationary for the MPCC \eqref{EqMPCC}} if there is some partition $(\beta_1,\beta_2)$ of $I^{00}$ such that
\[0\in\nabla f(\xb)+\nabla F(\xb)^T\left((Q^{\beta_1,\beta_2}_{CC})^\circ\cap(\ker\nabla F(\xb)^T+ (Q^{\beta_2,\beta_1}_{CC})^\circ)\cap N_D(F(\xb))\right).\]
\end{enumerate}
\end{definition}
\begin{theorem}\label{ThMPCC_beta_stat}
  Assume that GGCQ is fulfilled at the point $\xb\in\Omega_C$. If $\xb$ is B-stationary, then $\xb$ is $\Q$-stationary for the MPCC \eqref{EqMPCC} with respect to every partition $(\beta_1,\beta_2)$ of $I^{00}$ and it is also $\Q_M$ stationary. Conversely, if $\xb$ is $\Q$-stationary with respect to a partition $(\beta_1,\beta_2)$ of $I^{00}$, which fulfills also the assumptions of Theorem \ref{ThMPCCRegNormalCone}, then $\xb$ is S-stationary and consequently B-stationary.
\end{theorem}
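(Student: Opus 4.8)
The plan is to reduce Theorem~\ref{ThMPCC_beta_stat} to the abstract results of Section~\ref{SecTheory}, namely Corollary~\ref{CorBetaStat} and Theorem~\ref{ThM_betaStat}, together with the MPCC-specific analysis already carried out in Proposition~\ref{PropMPCC} and Theorem~\ref{ThMPCCRegNormalCone}. The starting observation is that, by Lemma~\ref{LemBasPropMPCC}, for every partition $(\beta_1,\beta_2)\in\Pa(I^{00})$ the pair $(Q^{\beta_1,\beta_2}_{CC},Q^{\beta_2,\beta_1}_{CC})$ satisfies \eqref{EqCondQi}, so the whole collection $\Q_{CC}$ is an admissible choice of $\Q$ in the sense of Definition~\ref{DefQStat}, and moreover the MPCC notion of $\Q$-stationarity with respect to $(\beta_1,\beta_2)$ coincides with the abstract notion of $\Q$-stationarity with respect to $(Q^{\beta_1,\beta_2}_{CC},Q^{\beta_2,\beta_1}_{CC})$ because of \eqref{EqN_beta_CC} and the definition of $M^{\beta_1,\beta_2}_{CC}$ in \eqref{EqInclN_MPCC}. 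Similarly the $\Q_M$-stationarity notions match verbatim.

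First I would settle the B-stationarity $\Rightarrow$ $\Q$-stationarity part: this is immediate from the first assertion of Corollary~\ref{CorBetaStat}, since $\xb$ B-stationary is $\Q$-stationary with respect to \emph{every} pair in $\Q_{CC}$, i.e. with respect to every partition. Next, for the $\Q_M$-stationarity claim, I would invoke part~3 of Theorem~\ref{ThM_betaStat}: here $\TD$ is given explicitly in \eqref{EqT_D_MPCC} as a finite union of convex polyhedral cones (one for each partition of $I^{00}$), and for every $t\in\TD$ there is a partition $(\beta_1,\beta_2)$ with $t\in Q^{\beta_1,\beta_2}_{CC}$ — indeed, put $i\in\beta_1$ when the $G$-component of the $i$-th block of $t$ vanishes and $i\in\beta_2$ otherwise (for $i\in I^{00}$); this exhibits $t$ in $Q^{\beta_1,\beta_2}_{CC}$. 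Hence the hypothesis of part~3 of Theorem~\ref{ThM_betaStat} holds with $\Q=\Q_{CC}$, and $\Q_M$-stationarity of $\xb$ follows.

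For the converse, suppose $\xb$ is $\Q$-stationary with respect to a partition $(\beta_1,\beta_2)$ of $I^{00}$ satisfying the sign conditions of Theorem~\ref{ThMPCCRegNormalCone}. By that theorem (under GGCQ) we have $\widehat N_{\Omega_C}(\xb)=M^{\beta_1,\beta_2}_{CC}=\nabla F(\xb)^T\NrD$; equivalently, the pair $(Q^{\beta_1,\beta_2}_{CC},Q^{\beta_2,\beta_1}_{CC})$ satisfies condition \eqref{EqEqualityBetaStat} of Theorem~\ref{ThQ} (the proof of Theorem~\ref{ThMPCCRegNormalCone} in fact verifies exactly \eqref{EqEqualityBetaStat}). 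Then the second assertion of Corollary~\ref{CorBetaStat} applies and yields that $\xb$ is S-stationary, hence B-stationary by the discussion following Theorem~\ref{ThInclNormalCone}. The only point that needs a line of care is the identification of the two sets of stationarity notions (MPCC versus abstract), which is a matter of comparing \eqref{EqInclN_MPCC}--\eqref{EqN_beta_CC} with Definition~\ref{DefQStat}; I do not expect a genuine obstacle, the main (entirely routine) bookkeeping being the explicit verification that every $t\in\TD$ lies in some $Q^{\beta_1,\beta_2}_{CC}$, which is read off directly from the product structure of $\TD$ in \eqref{EqT_D_MPCC} and the definition of $Q^{\beta_1,\beta_2}_{CC}$.
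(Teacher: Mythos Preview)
Your proposal is correct and follows essentially the same route as the paper's own proof: the paper reduces the first and last assertions to Proposition~\ref{PropMPCC} and Theorem~\ref{ThMPCCRegNormalCone} (equivalently, to Corollary~\ref{CorBetaStat}), and obtains $\Q_M$-stationarity from Theorem~\ref{ThM_betaStat}(3.) via the identity $\TD=\bigcup_{(\beta_1,\beta_2)\in\Pa(I^{00})}Q^{\beta_1,\beta_2}_{CC}$. Your explicit recipe for assigning each $t\in\TD$ to a partition is just a spelled-out version of what the paper condenses into that one line.
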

\begin{proof}
  In view of the definitions of B-stationarity and S-stationarity together with Proposition \ref{PropMPCC} and Theorem \ref{ThMPCCRegNormalCone} there is only to show the assertion about $\Q_M$-stationarity. This follows easily from Theorem \ref{ThM_betaStat}(3.) because $\TD=\bigcup_{(\beta_1,\beta_2)\in\Pa(I^{00})}Q^{\beta_1,\beta_2}_{CC}$ is the union of finitely many convex polyhedral cones  generating the collection $\Q$.
\end{proof}
\begin{remark}
  Given a multiplier $\lambda\in N_D(F(\xb))$ verifying the M-stationarity condition $0\in\nabla f(\xb)+\nabla F(\xb)^T\lambda$ we can use the partition $(\beta_1,\beta_2)\in\Pa(I^{00})$ defined by
  \[\beta_1=\{i\in I^{00}\mv \lambda_i^H\geq0\},\quad \beta_2=\{i\in I^{00}\mv \lambda_i^H<0\}\]
  for testing $\xb$ on $\Q_M$-stationarity, because this choice ensures $\lambda\in \big(Q^{\beta_1,\beta_2}_{CC}\big)^\circ$. The computation of such a multiplier $\lambda$ can be done by means of the algorithm presented in the proof of \cite[Theorem 4.3]{Gfr14a}.
\end{remark}
We see that $\Q$-stationarity is a first order necessary condition for $\xb$ being a local minimizer, provided GGCQ is fullfilled, which is to be considered as a very weak constraint qualification. In order to verify $\Q$-stationarity, only a  system of linear equalities and linear inequalities has to be solved, but the main difference to the usual first-order optimality conditions is, that a second multiplier $\mu$ is involved.

Note that postulating GGCQ in our problem setting is equivalent to MPEC-GCQ as given in \cite{FleKan06}. It was shown in \cite{FleKan06} that under MPEC-GCQ any B-stationary point of MPCC is M-stationary. Theorem \ref{ThMPCC_beta_stat} improves this result by stating that even $\Q_M$-stationarity holds.

Let us now turn our attention to the case when the gradients of the constraints active at the point $\xb$,
\[\nabla h_i(\xb),i\in\{1,\ldots,m_E\},\ \nabla g_i(\xb),i\in I^g,\ \nabla G_i(\xb),\in I^{0+}\cup I^{00},\ \nabla H_i(\xb),i\in I^{+0}\cup I^{00}\]
are linearly independent. This constraint qualification is usually named MPEC-LICQ in the literature. Then we obviously  have ${\cal N}_{CC}=\{0\}$ and therefore the assumptions of Theorem \ref{ThMPCCRegNormalCone} hold. Hence, under MPEC-LICQ $\Q$-stationarity automatically implies  S-stationarity and B-stationarity. This is remarkable because M-stationarity does not have this property: Under MPEC-LICQ an M-stationary point is neither S-stationary nor B-stationary in general.  However, in case when MPEC-LICQ does not hold, there also exist examples where a $\Q$-stationary point is not M-stationary and therefore neither M-stationarity implies $\Q$-stationarity nor vice versa. However, the following example shows that $\Q_M$-stationarity is strictly stronger than M-stationarity.
\begin{example}(cf.\cite[Example 3]{Gfr14a})
Consider the MPCC
\begin{eqnarray*}\min_{x\in\R^3}f(x)&:=&x_1+x_2-2x_3\\
\mbox{subject to }g_1(x)&:=&-x_1-x_3\leq0\\
  g_2(x)&:=&-x_2+x_3\leq0\\
  0\leq G_1(x)&:=&x_1\perp x_2=:H_1(x)\geq 0
\end{eqnarray*}
Then $\xb=(0,0,0)$ is not a local minimizer because for every $\alpha>0$ the point $x^\alpha=(0,\alpha,\alpha)$ is feasible and $f(x^\alpha)=-\alpha<0=f(\xb)$. GACQ is fulfilled because all constraints are linear and the linearized cone amounts to
\begin{eqnarray*}
  -u_1-u_3\leq 0,\ -u_2+u_3\leq 0,\ 0\geq -u_1\perp -u_2\leq 0.
\end{eqnarray*}
 Straightforward calculations yield that $\xb$ is M-stationary and $\lambda=(\lambda^g_1,\lambda^g_2,\lambda^G_1,\lambda^H_1)=(1,3,0,-2)$ is the unique multiplier fulfilling the M-stationarity conditions. However, we will now show that $\xb$ is not $\Q_M$-stationary. Assuming that $\xb$ is $\Q_M$-stationary, by taking $\beta_1=\emptyset$, $\beta_2=\{1\}$, there would exist some $\mu=(\mu^g_1,\mu^g_2,\mu^G_1,\mu^H_1)$ verifying
\begin{eqnarray*}
  &&-\mu^g_1-\mu^G_1=0,\ -\mu^g_2-\mu^H_1=0,\ -\mu^g_1+\mu^g_2=0\\
  &&\mu_g^1\leq \lambda^g_1=1,\ \mu_g^2\leq\lambda^g_2=3,\ \mu^H_1\leq\lambda^H_1=-2.
\end{eqnarray*}
But a solution of this system must fulfill
\[\mu^g_2=\mu_g^1\leq 1,\ \mu^g_2=-\mu^H_1\geq 2\]
which is obviously not possible. On the other hand, if we take $\beta_1=\{1\}$, $\beta_2=\emptyset$ then $\lambda\not\in (Q^{\beta_1,\beta_2}_{CC})^\circ$.
Hence $\xb$ is not $\Q_M$-stationary and we have demonstrated that $\Q_M$-stationarity is a stronger property than M-stationarity.
\end{example}

\section{Application to MPVC}
In this section we consider a {\em mathematical program with vanishing constraints} (MPVC) of the form
\begin{eqnarray}
\nonumber\min &&f(x)\\
\nonumber\text{subject to }&&h(x)=0,\\
\nonumber&&g(x)\leq0,\\
\label{EqMPVC}&&H_i(x)\geq0,\ G_i(x)H_i(x)\leq 0,\ i=1,\ldots, m_V,
\end{eqnarray}
where $f:\R^n\to\R$, $h:\R^n\to\R^{m_E}$, $g:\R^n\to\R^{m_I}$, $G:\R^n\to\R^{m_V}$ and $H:\R^n\to\R^{m_V}$ are assumed to be at least continuously differentiable. To transform the constraints into the format \eqref{EqConstrSyst} we use
\[F(x)=\left(h(x),g(x),-H_1(x),G_1(x),\ldots,-H_{m_V}(x),G_{m_V}(x)\right),\quad D=\{0\}^{m_E}\times\R_-^{m_I}\times D_V^{m_V},\]
where
\[D_V:=\{(a,b)\in\R_-\times\R \mv ab\geq0\}.\]
Now we denote the feasible region of \eqref{EqMPVC} by $\Omega_V$ and we introduce the following index sets of constraints active at a feasible point $\xb\in\Omega_V$:
\begin{eqnarray*}
I^g&:=&\{i\in \{1,\ldots, m_I\}\mv g_i(\xb)=0\},\\
I^{0-}&:=&\{i\in \{1,\ldots, m_V\}\mv H_i(\xb)=0 > G_i(\xb)\},\\
I^{00}&:=&\{i\in \{1,\ldots, m_V\}\mv H_i(\xb)=0 =G_i(\xb)\},\\
I^{0+}&:=&\{i\in \{1,\ldots, m_V\}\mv H_i(\xb)=0 < G_i(\xb)\},\\
I^{+0}&:=&\{i\in \{1,\ldots, m_V\}\mv H_i(\xb)>0 = G_i(\xb)\},\\
I^{+-}&:=&\{i\in \{1,\ldots, m_V\}\mv H_i(\xb)>0 > G_i(\xb)\}.\\
\end{eqnarray*}
Straightforward calculations yield that
\[T_D(F(\xb))=\{0\}^{m_E}\times T_{\R^{m_I}_-}(g(\xb))\times\prod_{i=1}^{m_V}T_{D_V}(-H_i(\xb), G_i(\xb))\]
with $T_{\R^{m_I}_-}(g(\xb))=\{v\in\R^{m_I}\mv v_i\leq 0, i\in I^g\}$,
\[T_{D_V}(-H_i(\xb), G_i(\xb))=\begin{cases}\R_-\times\R&\text{if }i\in I^{0-},\\
D_V&\text{if }i\in I^{00},\\
\{0\}\times\R&\text{if }i\in I^{0+},\\
\R\times \R_-&\text{if }i\in I^{+0},\\
\R\times \R&\text{if }i\in I^{+-},\end{cases}\quad
\widehat N_{D_V}(-H_i(\xb), G_i(\xb))=\begin{cases}\R_+\times\{0\}&\text{if }i\in I^{0-},\\
\R_+\times\{0\}&\text{if }i\in I^{00},\\
\R\times\{0\}&\text{if }i\in I^{0+},\\
\{0\}\times \R_+&\text{if }i\in I^{+0},\\
\{0\}\times \{0\}&\text{if }i\in I^{+-}\end{cases}
\]
and consequently, $T^{\rm lin}_{\Omega_V}(\xb)$ is the collection of all $u\in\R^n$ fulfilling the system
\begin{eqnarray}\nonumber&&\nabla h(\xb)u= 0,\\
\nonumber &&\nabla g_i(\xb)u\leq 0,\ i\in I^g,\\
\label{EqLinTanConeMPVC} &&-\nabla H_i(\xb)u=0,\ i\in I^{0+},\\
\nonumber &&-\nabla H_i(\xb)u\leq 0,\ i\in I^{0-}\cup I^{00},\\
\nonumber && (\nabla G_i(\xb)u)(\nabla H_i(\xb)u)\geq 0,\ i\in I^{00},\\
\nonumber &&\nabla G_i(\xb)u\leq 0,\ i\in I^{+0}.
\end{eqnarray}
Further note that $N_{D_V}(-H_i(\xb), G_i(\xb))=\widehat N_{D_V}(-H_i(\xb), G_i(\xb))$, $i\not\in I^{00}$ and $N_{D_V}(-H_i(\xb), G_i(\xb)) =(\R\times\{0\})\cup(\{0\}\times\R_+)$, $i\in I^{00}$.

Similar to MPCC we define for every partition $(\beta_1,\beta_2)$ of the set $I^{00}$ the
cone
\[Q^{\beta_1,\beta_2}_{VC}:=\{0\}^{m_E}\times T_{\R^{m_I}_-}(g(\xb))\times\prod_{i=1}^{m_V}\tau^{\beta_1,\beta_2}_{i},\]
where $\tau^{\beta_1,\beta_2}_{i}:=T_{D_V}( -H_i(\xb),G(\xb))$ if $i\not\in I^{00}$ and
\[\tau^{\beta_1,\beta_2}_{i}:=\begin{cases}
\{0\}\times\R&\mbox{if $i\in\beta_1$,}\\
\R_-\times\R_-&\mbox{if $i\in\beta_2$,}
\end{cases}
\]
\begin{lemma}
  For every partition $(\beta_1,\beta_2)\in\Pa(I^{00})$ the pair $(Q_1,Q_2)=(Q^{\beta_1,\beta_2}_{VC},Q^{\beta_2,\beta_1}_{VC})$ consists of two closed convex cones fulfilling \eqref{EqCondQi} and \eqref{EqNecEqual}.
\end{lemma}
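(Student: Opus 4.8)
The plan is to mimic, almost verbatim, the proof of Lemma~\ref{LemBasPropMPCC}, with the complementarity cone $D_C$ replaced by the vanishing-constraint cone $D_V=\{(a,b)\in\R_-\times\R\mv ab\geq0\}$ and the cones $T_{D_C}(\cdot)$ replaced by $T_{D_V}(\cdot)$ and the selection cones $\tau^{\beta_1,\beta_2}_i$ introduced just above. First I would record that each $Q^{\beta_1,\beta_2}_{VC}$ is a Cartesian product of convex polyhedral cones: the factors $\{0\}^{m_E}$ and $T_{\R^{m_I}_-}(g(\xb))$ are polyhedral, and every factor $\tau^{\beta_1,\beta_2}_i$ is one of the polyhedral cones $\{0\}\times\R$, $\R_-\times\R_-$, $\R_-\times\R$, $\R\times\R_-$ or $\R\times\R$; in particular the only non-convex set in sight, $D_V$ itself, appears in $\TD$ but never as a factor of $Q^{\beta_1,\beta_2}_{VC}$. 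Hence $Q^{\beta_1,\beta_2}_{VC}$ is a closed convex polyhedral cone. The inclusion $Q^{\beta_1,\beta_2}_{VC}\subset\TD$ is then verified factor by factor; the only non-trivial inclusions are, for $i\in I^{00}$, $\{0\}\times\R\subset D_V$ and $\R_-\times\R_-\subset D_V$, both immediate from the definition of $D_V$.

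With $Q_1=Q^{\beta_1,\beta_2}_{VC}$ and $Q_2=Q^{\beta_2,\beta_1}_{VC}$ now known to be convex polyhedral cones contained in $\TD$, the identity $(\nabla F(\xb)^{-1}Q_j)^\circ=\nabla F(\xb)^TQ_j^\circ$ — the second half of \eqref{EqCondQi} — follows directly from the polyhedral case of Proposition~\ref{PropPolarCones}, applied with $A=\nabla F(\xb)$ and $C=Q_j$ (note $\co Q_j=Q_j$). This settles \eqref{EqCondQi}.

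For \eqref{EqNecEqual}, i.e.\ $(Q^{\beta_1,\beta_2}_{VC})^\circ\cap(Q^{\beta_2,\beta_1}_{VC})^\circ=\NrD$, the key computation is that the two selection cones together recover the full tangent cone in every coordinate block: the $\R^{m_E}$ and $\R^{m_I}$ blocks of $Q^{\beta_1,\beta_2}_{VC}$ and $Q^{\beta_2,\beta_1}_{VC}$ coincide outright, for $i\notin I^{00}$ one has $\tau^{\beta_1,\beta_2}_i=\tau^{\beta_2,\beta_1}_i=T_{D_V}(-H_i(\xb),G_i(\xb))$, and for $i\in I^{00}$, whichever of $\beta_1,\beta_2$ contains $i$, one gets $\tau^{\beta_1,\beta_2}_i\cup\tau^{\beta_2,\beta_1}_i=(\{0\}\times\R)\cup(\R_-\times\R_-)=D_V=T_{D_V}(-H_i(\xb),G_i(\xb))$. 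Applying \eqref{EqPolarConeProd} with the convex pieces $\tau^{\beta_1,\beta_2}_i$ and $\tau^{\beta_2,\beta_1}_i$ in the roles of $P_i$ and $Q_i$ then gives
\[(Q^{\beta_1,\beta_2}_{VC})^\circ\cap(Q^{\beta_2,\beta_1}_{VC})^\circ=\big(\{0\}^{m_E}\big)^\circ\times\big(T_{\R^{m_I}_-}(g(\xb))\big)^\circ\times\prod_{i=1}^{m_V}\big(T_{D_V}(-H_i(\xb),G_i(\xb))\big)^\circ,\]
and since $\TD$ is exactly this product of cones, the right-hand side equals $(\TD)^\circ=\NrD$ by \eqref{fn}.

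I do not expect a genuine obstacle: once the tiling identity $(\{0\}\times\R)\cup(\R_-\times\R_-)=D_V$ is recognized, everything is the same routine case analysis as in Lemma~\ref{LemBasPropMPCC}. The one point that requires a little care is that $D_V$ is not convex, so it cannot itself be used as a factor when invoking \eqref{EqPolarConeProd}; instead the biactive block must be split into the two convex cones $\tau^{\beta_1,\beta_2}_i$ and $\tau^{\beta_2,\beta_1}_i$ — which is precisely the purpose of the partition $(\beta_1,\beta_2)$ — and \eqref{EqPolarConeProd} is applied to these pieces, their (non-convex) union merely needing to coincide as a set with $T_{D_V}(-H_i(\xb),G_i(\xb))$.
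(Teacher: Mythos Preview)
Your proposal is correct and is precisely the approach the paper intends: the paper's own proof is omitted with the remark that it ``follows the same lines as the proof of Lemma~\ref{LemBasPropMPCC},'' and you have carried out exactly that adaptation, including the key tiling identity $(\{0\}\times\R)\cup(\R_-\times\R_-)=D_V$ for the biactive indices and the application of Proposition~\ref{PropPolarCones} and \eqref{EqPolarConeProd}.
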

\begin{proof}
The proof follows the same lines as the proof of Lemma \ref{LemBasPropMPCC} and is therefore omitted.
\end{proof}
Similar to the case of MPCC we have
\[\TD=\bigcup_{(\beta_1,\beta_2)\in \Pa(I^{00})}Q^{\beta_1,\beta_2}_{VC}.\]
Consider the following two sets of multipliers,
\begin{eqnarray*}
  {\cal R}_{VC}&:=&\{(\mu^h,\mu^g,\mu^H,\mu^G)\in\R^{m_E}\times \R^{m_I}\times\R^{m_V}\times\R^{m_V}\mv\\
  &&\qquad \mu_i^g=0,\ i\in\{1,\ldots,m_I\}\setminus I^g,\\
  &&\qquad \mu^H_i=0,\ i\in\{1,\ldots,m_V\}\setminus(I^{0-}\cup I^{00}\cup I^{0+}),\\
  &&\qquad \mu^G_i=0,\ i\in\{1,\ldots,m_V\}\setminus(I^{+0}\cup I^{00}) \}
\end{eqnarray*}
and
\begin{eqnarray*}
{\cal N}_{VC}&:=&\{(\mu^h,\mu^g,\mu^H,\mu^G)\in{\cal R}_{VC}\mv\\
  &&\qquad
  \sum_{i=1}^{m_E}\mu_i^h\nabla h_i(\xb)+ \sum_{i=1}^{m_I}\mu_i^g\nabla g_i(\xb)+\sum_{i=1}^{m_V}(\mu_i^G\nabla G_i(\xb)-\mu_i^H\nabla H_i(\xb))=0 \} .
\end{eqnarray*}
Note that
\begin{equation}\label{EqRegNormMPVC}
\NrD=\{\lambda\in{\cal R}_{VC}\mv\begin{array}[t]{l} \lambda^g_i\geq 0, i\in I^g,\ \lambda^H_i\geq 0,\  i\in I^{0-},\ \lambda^G_i\geq0,\ i\in I^{+0},\\
 \lambda^H_i\geq 0,\ \lambda^G_i=0,\ i\in I^{00}\}\end{array}
\end{equation}
and
\begin{equation}\label{EqLimNormMPVC}
N_D(F(\xb))=\{\lambda\in{\cal R}_{VC}\mv\begin{array}[t]{l} \lambda^g_i\geq 0, i\in I^g,\ \lambda^H_i\geq 0,\  i\in I^{0-},\ \lambda^G_i\geq0,\ i\in I^{+0},\\  \lambda^G_i\geq 0,\ \lambda^H_i\lambda^G_i= 0,\ i\in I^{00}\}.\end{array}
\end{equation}
\begin{proposition}\label{PropMPVC}
   Let $\xb$ belong to the feasible region  $\Omega_V$ of the MPVC \eqref{EqMPVC} and assume that GGCQ is fulfilled at $\xb$.  Then for every partition $(\beta_1,\beta_2)$ of the index set $I^{00}$ we have
  \begin{eqnarray}
    \label{EqInclN_MPVC}\widehat N_{\Omega_V}(\xb)&\subset& \{\sum_{i=1}^{m_E}\lambda_i^h\nabla h_i(\xb)+ \sum_{i=1}^{m_I}\lambda_i^g\nabla g_i(\xb)+\sum_{i=1}^{m_V}(\lambda_i^G\nabla G_i(\xb)-\lambda_i^H\nabla H_i(\xb))\mv\qquad\\
    \nonumber&&\qquad\qquad (\lambda^h,\lambda^g,\lambda^H,\lambda^G)\in \tilde N^{\beta_1,\beta_2}_{VC}\}=:M^{\beta_1,\beta_2}_{VC},
  \end{eqnarray}
  where
  \begin{eqnarray*}
    \tilde N^{\beta_1,\beta_2}_{VC}  &:=&\{(\lambda^h,\lambda^g,\lambda^H,\lambda^G)\in{\cal R}_{VC}\mv
    \exists (\mu^h,\mu^g,\mu^H,\mu^G)\in{\cal N}_{VC}:\\
    \nonumber&&\qquad\lambda^g_i\geq\max\{\mu^g_i,0\},\ i\in I^g,\\
    &&\qquad\lambda^H_i\geq\max\{\mu^H_i,0\},\  i\in I^{0-},\quad  \lambda^G_i\geq\max\{\mu^G_i,0\},\ i\in I^{+0},\\
    &&\qquad\lambda^H_i\geq\mu^H_i,\ \mu^G_i\leq\lambda^G_i=0,\ i\in\beta_1,\quad \lambda^H_i\geq 0,\ \lambda^G_i=\mu^G_i\geq 0,\ i\in\beta_2\}\\
    &=&(Q^{\beta_1,\beta_2}_{VC})^\circ\cap(\ker\nabla F(\xb)^T+(Q^{\beta_2,\beta_1}_{VC})^\circ).
\end{eqnarray*}
\end{proposition}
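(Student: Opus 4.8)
The plan is to invoke Theorem~\ref{ThQ} with the admissible pair $(Q_1,Q_2)=(Q^{\beta_1,\beta_2}_{VC},Q^{\beta_2,\beta_1}_{VC})$, whose membership in the required class was verified in the preceding lemma (conditions \eqref{EqCondQi} and \eqref{EqNecEqual}). By the inclusion \eqref{EqInclRegNormalConeQ1_Q2} we immediately obtain
\[\widehat N_{\Omega_V}(\xb)\subset \nabla F(\xb)^T\left((Q^{\beta_1,\beta_2}_{VC})^\circ\cap(\ker\nabla F(\xb)^T+(Q^{\beta_2,\beta_1}_{VC})^\circ)\right),\]
so the whole task reduces to identifying the set on the right with $M^{\beta_1,\beta_2}_{VC}$; equivalently, to proving the claimed equality $\tilde N^{\beta_1,\beta_2}_{VC}=(Q^{\beta_1,\beta_2}_{VC})^\circ\cap(\ker\nabla F(\xb)^T+(Q^{\beta_2,\beta_1}_{VC})^\circ)$, after which $M^{\beta_1,\beta_2}_{VC}=\nabla F(\xb)^T\tilde N^{\beta_1,\beta_2}_{VC}$ is merely the expansion of $\nabla F(\xb)^T$ acting on a vector of multipliers arranged as in $F$. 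This follows the proof of Proposition~\ref{PropMPCC} almost verbatim.

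The core step is the explicit computation of $(Q^{\beta_1,\beta_2}_{VC})^\circ$. Since $Q^{\beta_1,\beta_2}_{VC}$ is a product of closed convex cones, \eqref{EqPolarConeProd} gives $(Q^{\beta_1,\beta_2}_{VC})^\circ=\R^{m_E}\times N_{\R^{m_I}_-}(g(\xb))\times\prod_{i=1}^{m_V}(\tau^{\beta_1,\beta_2}_i)^\circ$, and one reads off $(\tau^{\beta_1,\beta_2}_i)^\circ=\R_+\times\{0\}$ for $i\in I^{0-}$, $\R\times\{0\}$ for $i\in I^{0+}$, $\{0\}\times\R_+$ for $i\in I^{+0}$, $\{0\}\times\{0\}$ for $i\in I^{+-}$, $\R\times\{0\}$ for $i\in\beta_1$ and $\R_+\times\R_+$ for $i\in\beta_2$; for $(Q^{\beta_2,\beta_1}_{VC})^\circ$ the last two entries are swapped, i.e. $\R_+\times\R_+$ for $i\in\beta_1$ and $\R\times\{0\}$ for $i\in\beta_2$. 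Here the first coordinate of each pair is the multiplier attached to $-H_i$ and the second the one attached to $G_i$, matching the arrangement of $F$ and the sign convention built into ${\cal R}_{VC}$ and ${\cal N}_{VC}$.

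With these polars in hand one writes a generic $\lambda=(\lambda^h,\lambda^g,\lambda^H,\lambda^G)$ in the intersection as $\lambda=\eta+\mu$ with $\eta\in(Q^{\beta_2,\beta_1}_{VC})^\circ$, $\mu\in\ker\nabla F(\xb)^T$ and $\lambda\in(Q^{\beta_1,\beta_2}_{VC})^\circ$, and runs the componentwise analysis exactly as in Proposition~\ref{PropMPCC}: the $h$-block is unconstrained; on $I^g$ one gets $\lambda^g_i\ge\max\{\mu^g_i,0\}$ while off $I^g$ one forces $\lambda^g_i=\mu^g_i=0$; on $I^{0-}$ one gets $\mu^G_i=0$ and $\lambda^H_i\ge\max\{\mu^H_i,0\}$; on $I^{+0}$ one gets $\mu^H_i=0$ and $\lambda^G_i\ge\max\{\mu^G_i,0\}$; on $I^{0+}$ only $\lambda^G_i=\mu^G_i=0$; on $I^{+-}$ all four entries vanish; on $\beta_1$, comparing $\R\times\{0\}$ with $\R_+\times\R_+$ yields $\lambda^G_i=0$, $\mu^G_i\le0$ and $\lambda^H_i\ge\mu^H_i$; and on $\beta_2$, comparing $\R_+\times\R_+$ with $\R\times\{0\}$ yields $\lambda^G_i=\mu^G_i\ge0$ and $\lambda^H_i\ge0$. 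These are precisely the defining relations of $\tilde N^{\beta_1,\beta_2}_{VC}$, which completes the identification and hence the proof.

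There is no genuine obstacle here — the argument is a routine componentwise polarity computation — but the step requiring the most care is the bookkeeping for the biactive indices $i\in I^{00}=\beta_1\cup\beta_2$: there the two building blocks $\{0\}\times\R$ and $\R_-\times\R_-$ contribute asymmetrically to the two polars, and it is exactly this asymmetry, together with the swap $\beta_1\leftrightarrow\beta_2$ between $Q^{\beta_1,\beta_2}_{VC}$ and $Q^{\beta_2,\beta_1}_{VC}$, that forces the auxiliary multiplier $\mu\in{\cal N}_{VC}$ into the description and produces the different conditions on $\beta_1$ and $\beta_2$. One must also keep track of which components of $\mu$ are permitted to be nonzero according to ${\cal R}_{VC}$, so that the decomposition $\lambda=\eta+\mu$ is consistent on all the index sets.
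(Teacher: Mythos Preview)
Your proposal is correct and follows essentially the same approach as the paper: both invoke Theorem~\ref{ThQ} with $(Q_1,Q_2)=(Q^{\beta_1,\beta_2}_{VC},Q^{\beta_2,\beta_1}_{VC})$, compute the product-structure polars $(Q^{\beta_j,\beta_{3-j}}_{VC})^\circ$, and then carry out the identical componentwise analysis of the decomposition $\lambda=\eta+\mu$ across the index sets $I^g$, $I^{0-}$, $I^{0+}$, $I^{+0}$, $I^{+-}$, $\beta_1$, $\beta_2$ to identify $\tilde N^{\beta_1,\beta_2}_{VC}$.
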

\begin{proof}We can proceed similarly to the proof of Proposition \ref{PropMPCC}.
 We have $(Q^{\beta_1,\beta_2}_{VC})^\circ=\R^{m_E}\times N_{\R^{m_I}_-}(g(\xb))\times\prod_{i=1}^{m_V}(\tau^{\beta_1,\beta_2}_{i})^\circ$ and the set $\tilde N^{\beta_1,\beta_2}_{VC}=(Q^{\beta_1,\beta_2}_{CC})^\circ\cap(\ker\nabla F(\xb)^T+(Q^{\beta_2,\beta_1}_{CC})^\circ)$ consists of all $\lambda=(\lambda^h,\lambda^g, \lambda^H,\lambda^G)$ such that there exists $\eta=(\eta^h,\eta^g,\eta^H,\eta^G)\in (Q^{\beta_2,\beta_1}_{VC})^\circ$ and some $\mu=(\mu^h,\mu^g,\mu^H,\mu^G)\in\ker\nabla F(\xb)^T$ such that
\begin{eqnarray*}
  \lambda=\eta +\mu\in (Q^{\beta_1,\beta_2}_{VC})^\circ.
\end{eqnarray*}
Similar as in the proof of Proposition \ref{PropMPCC} this yields
\begin{eqnarray*}
\lambda^g_i\geq\max\{\mu^g_i,0\},\ i\in I^g,\quad \lambda^g_i=\mu^g_i=0,\ i\in\{1,\ldots,m_I\}\setminus I^g,\\
\lambda^H_i\geq\max\{\mu^H_i,0\},\ \lambda^G_i=\mu^G_i=0,\ i\in I^{0-},\quad \lambda^G_i=\mu^G_i=0,\ i\in I^{0+},\\
\lambda^H_i=\mu^H_i=0,\ \lambda^G_i\geq\max\{\mu^G_i,0\},\ \ i\in I^{+0},\quad \lambda^H_i=\lambda^G_i=\mu^H_i=\mu^G_i=0,\ i\in I^{+-}.
\end{eqnarray*}
Now consider $i\in\beta_1$. Then $(\tau^{\beta_1,\beta_2}_i)^\circ=\R\times\{0\}$ and $(\tau^{\beta_2,\beta_1}_i)^\circ=\R_+\times\R_+$.
Hence
\[(\lambda^H_i,\lambda^G_i)=(\eta^H_i,\eta^G_i)+(\mu^H_i, \mu^G_i)\in\R\times\{0\}\]
and $(\eta^H_i,\eta^G_i)\in\R_+\times\R_+$, or equivalently
\begin{eqnarray*}
\lambda^H_i\geq\mu^H_i,\ \mu^G_i\leq\lambda^G_i=0,\ i\in\beta_1.
\end{eqnarray*}
In case that $i\in\beta_2$ we have $(\tau^{\beta_1,\beta_2}_i)^\circ=\R_+\times\R_+$ and $(\tau^{\beta_2,\beta_1}_i)^\circ=\R\times\{0\}$,
\[(\lambda^H_i,\lambda^G_i)=(\eta^H_i,\eta^G_i)+(\mu^H_i, \mu^G_i)\in\R_+\times\R_+, \]
and $(\eta^H_i,\eta^G_i)\in\R\times\{0\}$, which is equivalent to
\begin{eqnarray*}
\lambda^H_i\geq 0,\ \lambda^G_i=\mu^G_i\geq 0,\ i\in\beta_2.
\end{eqnarray*}
These arguments show that  $\tilde N^{\beta_1,\beta_2}_{VC}$ has the claimed representation and the assertion follows from  \eqref{EqInclRegNormalConeQ1_Q2}.
\end{proof}
In the following theorem we give a sufficient condition for equality in \eqref{EqInclN_MPVC}.
\begin{theorem}\label{ThMPVCRegNormalCone}  Let $\xb$ belong to the feasible region  $\Omega_V$ of the MPVC \eqref{EqMPVC} and assume that GGCQ is fulfilled at $\xb$. Further assume that  there is a partition $(\beta_1,\beta_2)$ of $I^{00}$ such that
\begin{equation}\label{EqSuffEquMPVC}\left.\begin{array}
  {l}(\mu^h,\mu^g,\mu^H,\mu^G)\in{\cal N}_{VC}\\
  \mu_i^G\leq 0,\ i\in\beta_1,\ \mu_i^G\geq 0,\ i\in\beta_2
\end{array}\right\}\Longrightarrow\mu_i^H\geq 0,\ i\in\beta_1,\ \mu^G_i=0,\ i\in\beta_2.
\end{equation}
Then
\begin{eqnarray*}
  \widehat N_{\Omega_V}(\xb)=M^{\beta_1,\beta_2}_{VC}=\nabla F(\xb)^T \widehat N_D(F(\xb)).
\end{eqnarray*}
\end{theorem}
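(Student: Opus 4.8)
The plan is to follow verbatim the pattern of the proof of Theorem~\ref{ThMPCCRegNormalCone}. By Proposition~\ref{PropMPVC} (that is, by \eqref{EqInclN_MPVC}, which is an instance of the inclusion \eqref{EqInclRegNormalConeQ1_Q2} of Theorem~\ref{ThQ} applied with the pair $(Q_1,Q_2)=(Q^{\beta_1,\beta_2}_{VC},Q^{\beta_2,\beta_1}_{VC})$) we already have $\widehat N_{\Omega_V}(\xb)\subset M^{\beta_1,\beta_2}_{VC}$, and by Theorem~\ref{ThQ} this inclusion turns into equality---and, by virtue of \eqref{EqInclRegNormalCone}, also into $\nabla F(\xb)^T\NrD$---as soon as condition \eqref{EqEqualityBetaStat} is verified, which here reads $M^{\beta_1,\beta_2}_{VC}\subset\nabla F(\xb)^T\NrD$. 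So the whole task reduces to establishing this one inclusion.

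To do so I would take an arbitrary $x^\ast\in M^{\beta_1,\beta_2}_{VC}$, write it as $x^\ast=\nabla F(\xb)^T\lambda$ with $\lambda=(\lambda^h,\lambda^g,\lambda^H,\lambda^G)\in\tilde N^{\beta_1,\beta_2}_{VC}$, and let $\mu=(\mu^h,\mu^g,\mu^H,\mu^G)\in{\cal N}_{VC}$ be the multiplier associated with $\lambda$ in the definition of $\tilde N^{\beta_1,\beta_2}_{VC}$. The key observation is that this witness $\mu$ automatically fulfils the premise of the standing assumption \eqref{EqSuffEquMPVC}: indeed, by the very relations defining $\tilde N^{\beta_1,\beta_2}_{VC}$ we have $\mu^G_i\leq\lambda^G_i=0$ for $i\in\beta_1$ and $\mu^G_i=\lambda^G_i\geq 0$ for $i\in\beta_2$. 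Hence \eqref{EqSuffEquMPVC} yields $\mu^H_i\geq 0$ for $i\in\beta_1$ and $\mu^G_i=0$ for $i\in\beta_2$.

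It then remains to check, via the description \eqref{EqRegNormMPVC}, that $\lambda$ itself already lies in $\NrD$ (so that, unlike in the MPCC case, no replacement of $\lambda$ is needed). Membership $\lambda\in{\cal R}_{VC}$ is part of the definition of $\tilde N^{\beta_1,\beta_2}_{VC}$; the conditions attached to the index sets $I^g$, $I^{0-}$, $I^{+0}$ hold because $\tilde N^{\beta_1,\beta_2}_{VC}$ forces $\lambda^g_i\geq\max\{\mu^g_i,0\}\geq 0$, $\lambda^H_i\geq\max\{\mu^H_i,0\}\geq 0$ and $\lambda^G_i\geq\max\{\mu^G_i,0\}\geq 0$ on them, respectively. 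For $i\in I^{00}=\beta_1\cup\beta_2$ one needs $\lambda^H_i\geq 0$ and $\lambda^G_i=0$: if $i\in\beta_1$, then $\lambda^G_i=0$ by definition and $\lambda^H_i\geq\mu^H_i\geq 0$ by the previous paragraph; if $i\in\beta_2$, then $\lambda^H_i\geq 0$ by definition and $\lambda^G_i=\mu^G_i=0$ again by the previous paragraph. Thus $\lambda\in\NrD$, hence $x^\ast\in\nabla F(\xb)^T\NrD$, condition \eqref{EqEqualityBetaStat} is established, and the proof is complete.

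I do not expect a genuine obstacle here. In contrast with Theorem~\ref{ThMPCCRegNormalCone}, where the defining relations of $\tilde N^{\beta_1,\beta_2}_{CC}$ do not pin $\lambda^G_i$ on $\beta_1$ to zero, so that one must pass to $\lambda-\mu$ and run a case distinction driven by the product inequalities, the cones $Q^{\beta_1,\beta_2}_{VC}$ are built so that the sign pattern of the associated witness $\mu$ encoded in $\tilde N^{\beta_1,\beta_2}_{VC}$ is exactly the left-hand side of \eqref{EqSuffEquMPVC}, and the right-hand side of that implication returns precisely the two inequalities ($\lambda^H_i\geq 0$ on $\beta_1$, $\lambda^G_i=0$ on $\beta_2$) that are missing for $\lambda\in\NrD$. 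The only care required is the bookkeeping across the six index sets $I^g,I^{0-},I^{00},I^{0+},I^{+0},I^{+-}$ and the two halves $\beta_1,\beta_2$ of $I^{00}$.
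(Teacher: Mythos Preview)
Your proposal is correct and follows exactly the paper's own route: you show the inclusion $\tilde N^{\beta_1,\beta_2}_{VC}\subset\NrD$ (which immediately gives $M^{\beta_1,\beta_2}_{VC}\subset\nabla F(\xb)^T\NrD$, i.e.\ condition \eqref{EqEqualityBetaStat}) by observing that the witness $\mu$ in the definition of $\tilde N^{\beta_1,\beta_2}_{VC}$ automatically satisfies the premise of \eqref{EqSuffEquMPVC}, whose conclusion then upgrades $\lambda$ to an element of $\NrD$. The paper's proof is simply a terse one-line version of precisely this argument.
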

\begin{proof}
Under the assumption of the theorem we conclude that
\begin{eqnarray*}
    \tilde N^{\beta_1,\beta_2}_{VC}
    &\subset&\{(\lambda^h,\lambda^g,\lambda^H,\lambda^G)\in{\cal R}_{VC}\mv
    \lambda^g_i\geq 0,\ i\in I^g,\\
    &&\qquad\lambda^H_i\geq0,\  i\in I^{0-},\quad  \lambda^G_i\geq0,\ i\in I^{+0},\\
    &&\qquad\lambda^H_i\geq0,\ \lambda^G_i=0,\ i\in\beta_1,\quad \lambda^H_i\geq 0,\ \lambda^G_i= 0,\ i\in\beta_2\}=\NrD.
\end{eqnarray*}
Now the claimed result follows from Theorem \ref{ThQ} together with Proposition \ref{PropMPVC} by taking  $(Q_1,Q_2)=(Q^{\beta_1,\beta_2}_{VC},Q^{\beta_2,\beta_1}_{VC})$.
\end{proof}
Next we establish an equivalent formulation of condition \eqref{EqSuffEquMPVC}.
\begin{lemma}\label{LemEquAss1}Let $(\beta_1,\beta_2)$ be a partition of $I^{00}$. Then the following statements are equivalent:
\begin{enumerate}
  \item[(i)]Condition \eqref{EqSuffEquMPVC} is fulfilled.
  \item[(ii)]For every $j\in\beta^1$ there exists some $z^j$ such that
  \begin{eqnarray}
    \nonumber &&\nabla h(\xb)z^j=0,\\
    \nonumber &&\nabla g_i(\xb)z^j=0,\ i\in I^g,\\
    \label{Eq_z_j}&&\nabla G_i(\xb)z^j=0,\ i\in I^{+0},\\
    \nonumber &&\nabla G_i(\xb)z^j\begin{cases}\geq 0,&i\in \beta_1,\\\leq 0,&i\in\beta_2,\end{cases}\\
    \nonumber &&\nabla H_i(\xb)z^j=0,\ i\in I^{0-}\cup I^{00}\cup I^{0+}\setminus\{j\},\\
    \nonumber &&\nabla H_j(\xb)z^j=-1
  \end{eqnarray}
  and there is some $\bar z$ such that
  \begin{eqnarray}
    \nonumber &&\nabla h(\xb)\bar z=0,\\
    \nonumber &&\nabla g_i(\xb)\bar z=0,\ i\in I^g,\\
    \label{Eq_bar_z}&&\nabla G_i(\xb)\bar z=0,\ i\in I^{+0},\\
    \nonumber &&\nabla G_i(\xb)\bar z\begin{cases}\geq 0,&i\in \beta_1,\\\leq -1&i\in\beta_2,\end{cases}\\
    \nonumber &&\nabla H_i(\xb)\bar z=0,\ i\in I^{0-}\cup I^{00}\cup I^{0+}.
  \end{eqnarray}
\end{enumerate}
\end{lemma}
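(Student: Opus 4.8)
The statement is a theorem of the alternative (Farkas-type duality) connecting the primal cone $\mathcal N_{VC}$ with the dual systems \eqref{Eq_z_j} and \eqref{Eq_bar_z}. The starting observation is that, because $\mathcal R_{VC}$ is a coordinate subspace, the set
\[K:=\{\mu\in\mathcal N_{VC}\mv \mu_i^G\le0,\ i\in\beta_1,\ \mu_i^G\ge0,\ i\in\beta_2\}\]
is a convex polyhedral cone, and condition \eqref{EqSuffEquMPVC} says exactly that for each $j\in\beta_1$ the linear functional $\mu\mapsto\mu_j^H$ is nonnegative on $K$, and for each $i\in\beta_2$ the functional $\mu\mapsto\mu_i^G$ vanishes on $K$ (equivalently, since $\mu_i^G\ge0$ already holds on $K$, is nonpositive on $K$). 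Dually, \eqref{Eq_z_j} and \eqref{Eq_bar_z} describe, after deleting the inhomogeneous entries, membership of the vectors $-\nabla H_j(\xb)^T$, respectively $-\nabla G_i(\xb)^T$ ($i\in\beta_2$), in the polar of the polyhedral cone of the remaining homogeneous constraints. The plan is to prove the two implications separately, treating the $\beta_1$-part (about $\mu^H$) and the $\beta_2$-part (about $\mu^G$) in parallel.

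For (ii)$\Rightarrow$(i) -- the easy direction -- I would argue by direct substitution. Take any $\mu\in K$. For $j\in\beta_1$, pair the defining identity $\sum_i\mu_i^h\nabla h_i(\xb)+\sum_i\mu_i^g\nabla g_i(\xb)+\sum_i(\mu_i^G\nabla G_i(\xb)-\mu_i^H\nabla H_i(\xb))=0$ of $\mathcal N_{VC}$ with the vector $z^j$ from \eqref{Eq_z_j}. The $h$- and $g$-contributions vanish, as do the $\nabla G_i$-terms for $i\in I^{+0}$ (the remaining $\mu_i^G$ being zero by $\mathcal R_{VC}$); for $i\in\beta_1\cup\beta_2$ the products $\mu_i^G\,(\nabla G_i(\xb)z^j)$ are nonpositive because the two factors have opposite signs by \eqref{Eq_z_j} and the definition of $K$; the $\nabla H_i$-terms with $i\ne j$ vanish, and the term with $i=j$ equals $-\mu_j^H\cdot(-1)=\mu_j^H$. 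Hence $\mu_j^H$ is minus a sum of nonpositive numbers, so $\mu_j^H\ge0$. For the $\beta_2$-part, pairing the same identity with $\bar z$ from \eqref{Eq_bar_z} makes the $h$-, $g$- and $\nabla H_i$-contributions and the $\nabla G_i$-terms for $i\in I^{+0}$ vanish, leaves the terms $\mu_i^G\,(\nabla G_i(\xb)\bar z)\le0$ for $i\in\beta_1$, and leaves $\mu_i^G\,(\nabla G_i(\xb)\bar z)\le-\mu_i^G\le0$ for $i\in\beta_2$; as the total is $0$, every one of these nonpositive terms is $0$, which for $i\in\beta_2$ forces $\mu_i^G=0$. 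This is \eqref{EqSuffEquMPVC}.

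For (i)$\Rightarrow$(ii) -- the harder direction -- I would argue by contraposition using the explicit description of the polar of a polyhedral cone (a consequence of Proposition \ref{PropPolarCones} applied with a polyhedral $C$, i.e. a Farkas-lemma argument). Fix $j\in\beta_1$ and suppose \eqref{Eq_z_j} has no solution. Deleting the inhomogeneous requirement $\nabla H_j(\xb)z^j=-1$ leaves a polyhedral cone $P$; since $P$ is a cone, unsolvability of \eqref{Eq_z_j} is equivalent to $\nabla H_j(\xb)z\ge0$ for all $z\in P$, that is, to $-\nabla H_j(\xb)^T\in P^\circ$. Writing $P^\circ$ as the sum of the span of the equality gradients and the finitely generated cone spanned by $-\nabla G_i(\xb)^T$ ($i\in\beta_1$) and $\nabla G_i(\xb)^T$ ($i\in\beta_2$), and rearranging the resulting identity in the light of the sign convention in $\mathcal N_{VC}$ (the minus sign in front of $\nabla H_i$) and of which coordinates are forced to zero by $\mathcal R_{VC}$, produces a $\mu\in K$ with $\mu_j^H=-1<0$ -- contradicting \eqref{EqSuffEquMPVC}. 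For the $\beta_2$-part one first reduces the simultaneous system \eqref{Eq_bar_z} to the individual statements ``there is $z$ in the polyhedral cone $P'$ (obtained from the constraints of \eqref{Eq_bar_z} by replacing $\nabla G_i(\xb)\bar z\le-1$ with $\nabla G_i(\xb)z\le0$ for $i\in\beta_2$) such that $\nabla G_i(\xb)z<0$'' for $i\in\beta_2$: if each holds, summing the corresponding $z$'s and rescaling yields $\bar z$; conversely solvability of \eqref{Eq_bar_z} gives all of them. If the statement fails for some $i\in\beta_2$, then $-\nabla G_i(\xb)^T\in(P')^\circ$, and the same dualization produces a $\mu\in K$ with $\mu_i^G>0$, again contradicting \eqref{EqSuffEquMPVC}.

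The main obstacle is the bookkeeping in the duality step of the second implication: one has to keep exact track of the sign convention defining $\mathcal N_{VC}$, of the coordinates that $\mathcal R_{VC}$ pins to zero, and of the homogenization-and-rescaling that turns the normalized systems \eqref{Eq_z_j} and \eqref{Eq_bar_z} into statements about a homogeneous polyhedral cone, so that the Farkas representation of its polar can be invoked. Once the correspondence between the multipliers $\mu$ and the polar-cone coefficients is set up correctly, both implications reduce to the routine computations sketched above.
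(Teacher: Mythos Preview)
Your argument is correct and rests on the same LP--duality/Farkas idea as the paper. The paper's proof is more compressed: it observes that \eqref{EqSuffEquMPVC} is equivalent to boundedness of the linear programs $\min\{\mu_j^H\mid\mu\in K\}$ for $j\in\beta_1$ and $\max\{\sum_{i\in\beta_2}\mu_i^G\mid\mu\in K\}$, and then invokes LP duality once to identify dual feasibility with \eqref{Eq_z_j} and \eqref{Eq_bar_z}. You instead unpack the two directions separately---(ii)$\Rightarrow$(i) by the direct pairing argument (which is exactly weak duality) and (i)$\Rightarrow$(ii) by contraposition via the Farkas representation of $P^\circ$---and you treat the $\beta_2$--part index by index, summing the individual witnesses to manufacture $\bar z$, whereas the paper handles all $i\in\beta_2$ simultaneously through the single objective $\sum_{i\in\beta_2}\mu_i^G$. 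Both routes are equivalent; yours is more elementary and self-contained, while the paper's is shorter but presupposes the reader will verify the dual computation.
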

\begin{proof}
Condition \eqref{EqSuffEquMPVC} is fulfilled if and only if for every $j\in\beta_1$ the linear program
\begin{equation}
\label{EqLP_j}    \min \mu_j^H\quad\mbox{ subject to }\quad(\mu^h,\mu^g,\mu^H,\mu^G)\in{\cal N}_{VC},\ \mu_i^G\leq 0,\ i\in\beta_1,\ \mu_i^G\geq 0,\ i\in\beta_2
\end{equation}
has a solution and the linear program
\begin{equation}
\label{EqLP_beta2}    \max \sum_{i\in\beta_2}\mu^G_i\quad\mbox{ subject to }\quad(\mu^h,\mu^g,\mu^H,\mu^G)\in{\cal N}_{VC},\ \mu_i^G\leq 0,\ i\in\beta_1,\ \mu_i^G\geq 0,\ i\in\beta_2
\end{equation}
has a solution. Since the feasible regions of these linear programs are not empty, by duality theory of linear programming this is equivalent to the statement that the feasible regions of the corresponding dual programs are not empty. Since the feasible regions of the dual programs to \eqref{EqLP_j} and \eqref{EqLP_beta2}, respectively, are given by \eqref{Eq_z_j} and \eqref{Eq_bar_z}, respectively, the two statements (i) and (ii) are equivalent.
\end{proof}
The characterization of condition \eqref{EqSuffEquMPVC} by Lemma \ref{LemEquAss1} resembles the well-known Mangasarian-Fromovitz constraint qualification of nonlinear programming. It appears to be not very restrictive, e.g. in case when $\beta_1=\emptyset$, $\beta_2=I^{00}$ condition \eqref{EqSuffEquMPVC} is fulfilled when the system
\begin{eqnarray*}
    \nonumber &&\nabla h(\xb)\bar z=0,\\
    \nonumber &&\nabla g_i(\xb)\bar z=0,\ i\in I^g,\\
    &&\nabla G_i(\xb)\bar z=0,\ i\in I^{+0},\\
    \nonumber &&\nabla G_i(\xb)\bar z<0,\ i\in I^{00},\\
    \nonumber &&\nabla H_i(\xb)\bar z=0,\ i\in I^{0-}\cup I^{00}\cup I^{0+}
  \end{eqnarray*}
has a solution. Hence we think that Theorem \ref{ThMPVCRegNormalCone} is likely to be applicable in many situations.

At the end of this section we consider $\Q$-stationarity for MPVC with respect to $\Q=\Q_{VC}$, where
\[\Q_{VC}:=\{(Q^{\beta_1,\beta_2}_{VC},Q^{\beta_2,\beta_1}_{VC})\mv (\beta_1,\beta_2)\mbox{ is partition of }I^{00}\}.\]
\begin{definition}Let $\xb\in\Omega_V$.
\begin{enumerate}
\item   We say that $\xb$ is {\em $\Q$-stationary for the MPVC \eqref{EqMPVC} with respect to the partition $(\beta_1,\beta_2)$ of the index set $I^{00}$} if
  \[0\in \nabla f(\xb)+M^{\beta_1,\beta_2}_{VC},\]
  where $M^{\beta_1,\beta_2}_{VC}$ is given by \eqref{EqInclN_MPVC}.
\item  We say that $\xb$ is {\em $\Q$-stationary for the MPVC \eqref{EqMPVC} } if it is $\Q$-stationary with respect to some  partition $(\beta_1,\beta_2)$ of the index set $I^{00}$.
\item We say that $\xb$ is {\em $\Q_M$-stationary for the MPVC \eqref{EqMPVC}} if there is some partition $(\beta_1,\beta_2)$ of $I^{00}$ such that
\[0\in\nabla f(\xb)+\nabla F(\xb)^T\left((Q^{\beta_1,\beta_2}_{VC})^\circ\cap(\ker\nabla F(\xb)^T+ (Q^{\beta_2,\beta_1}_{VC})^\circ)\cap N_D(F(\xb))\right).\]
\end{enumerate}
\end{definition}
It follows from the definition that
\[\tilde N^{I^{00},\emptyset}_{VC}\subset\{(\lambda^h,\lambda^g,\lambda^H,\lambda^G)\in{\cal R}_{VC}\mv \begin{array}[t]{l} \lambda^g_i\geq 0,\ i\in I^g,\quad
  \lambda^H_i\geq0,\  i\in I^{0-},\\
      \lambda^G_i\geq0,\ i\in I^{+0},\quad\lambda^G_i=0,\ i\in I^{00}\}\subset N_D(F(\xb)).\end{array}\]
Hence, if $\xb$ is $\Q$-stationary with respect to $(I^{00},\emptyset)$, it is automatically $\Q_M$-stationary and  the following theorem follows from Proposition \ref{PropMPVC}, Theorem \ref{ThMPVCRegNormalCone} and Theorem \ref{ThM_betaStat}(1.).
\begin{theorem}\label{ThMPVC_beta_stat}
  Assume that GGCQ is fulfilled at the point $\xb\in\Omega_V$. If $\xb$ is B-stationary, then $\xb$ is $\Q$-stationary for the MPVC \eqref{EqMPVC} with respect to every partition $(\beta_1,\beta_2)$ of $I^{00}$ and, in particular, it is $Q-$stationary with respect to the partition  $(I^{00},\emptyset)$ implying $\Q_M-$stationarity. Conversely, if $\xb$ is $\Q$-stationary with respect to a partition $(\beta_1,\beta_2)$ of $I^{00}$, which fulfills also the assumptions of Theorem \ref{ThMPVCRegNormalCone}, then $\xb$ is S-stationary and consequently B-stationary as well.
\end{theorem}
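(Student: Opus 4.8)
The statement splits into two implications, and the plan is to reduce each one to results already established in the excerpt. For the forward direction, assume $\xb$ is B-stationary, i.e. $-\nabla f(\xb)\in\widehat N_{\Omega_V}(\xb)$. First I would invoke Proposition \ref{PropMPVC}: since GGCQ holds, for every partition $(\beta_1,\beta_2)$ of $I^{00}$ we have $\widehat N_{\Omega_V}(\xb)\subset M^{\beta_1,\beta_2}_{VC}$, and hence $0\in\nabla f(\xb)+M^{\beta_1,\beta_2}_{VC}$, which is exactly $\Q$-stationarity with respect to $(\beta_1,\beta_2)$. This gives $\Q$-stationarity with respect to every partition in one stroke. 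For the $\Q_M$-part, I would specialize to the partition $(I^{00},\emptyset)$ and use the inclusion displayed just before the theorem, namely $\tilde N^{I^{00},\emptyset}_{VC}\subset N_D(F(\xb))$. Combined with $\tilde N^{\beta_1,\beta_2}_{VC}=(Q^{\beta_1,\beta_2}_{VC})^\circ\cap(\ker\nabla F(\xb)^T+(Q^{\beta_2,\beta_1}_{VC})^\circ)$ from Proposition \ref{PropMPVC}, this says condition \eqref{EqBeta_M_Suff1} of Theorem \ref{ThM_betaStat}(1.) is met for the pair $(Q^{I^{00},\emptyset}_{VC},Q^{\emptyset,I^{00}}_{VC})\in\Q_{VC}$, so Theorem \ref{ThM_betaStat}(1.) yields $\Q_M$-stationarity directly.

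For the converse, suppose $\xb$ is $\Q$-stationary with respect to a partition $(\beta_1,\beta_2)$ that additionally satisfies the hypothesis \eqref{EqSuffEquMPVC} of Theorem \ref{ThMPVCRegNormalCone}. By that theorem we then have $M^{\beta_1,\beta_2}_{VC}=\nabla F(\xb)^T\widehat N_D(F(\xb))$. Since $\Q$-stationarity with respect to $(\beta_1,\beta_2)$ means $0\in\nabla f(\xb)+M^{\beta_1,\beta_2}_{VC}$, substituting the equality gives $0\in\nabla f(\xb)+\nabla F(\xb)^T\widehat N_D(F(\xb))$, which is precisely the definition of S-stationarity. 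That S-stationarity implies B-stationarity is immediate from the lower inclusion \eqref{EqInclRegNormalCone} in Theorem \ref{ThInclNormalCone}, exactly as noted in the text after that theorem.

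There is essentially no hard analytic step here; the theorem is a bookkeeping corollary of Proposition \ref{PropMPVC}, Theorem \ref{ThMPVCRegNormalCone}, and Theorem \ref{ThM_betaStat}(1.), in the same way that Theorem \ref{ThMPCC_beta_stat} follows from the corresponding MPCC results. The one point requiring a moment of care is the $\Q_M$-claim: one must make sure the pair used is the one indexed by $(I^{00},\emptyset)$ rather than its reverse, because only for that orientation does $\tilde N^{I^{00},\emptyset}_{VC}\subset N_D(F(\xb))$ hold (the $\lambda^G_i=0$ constraint on $\beta_1=I^{00}$ is what forces membership in $N_D(F(\xb))$, cf. \eqref{EqLimNormMPVC}). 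Once the correct pair is fixed, the verification of \eqref{EqBeta_M_Suff1} is the short computation already carried out in the display preceding the theorem statement, so the proof can simply cite it.
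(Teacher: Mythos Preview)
Your proposal is correct and follows essentially the same approach as the paper: the paper's proof is a one-line reference to Proposition \ref{PropMPVC}, Theorem \ref{ThMPVCRegNormalCone}, and Theorem \ref{ThM_betaStat}(1.), relying on the inclusion $\tilde N^{I^{00},\emptyset}_{VC}\subset N_D(F(\xb))$ displayed just before the theorem, exactly as you do. Your additional remark about why the orientation $(I^{00},\emptyset)$ (rather than its reverse) is the one that makes \eqref{EqBeta_M_Suff1} hold is a helpful clarification that the paper leaves implicit.
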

Further we have
\[\tilde N^{\emptyset, I^{00}}_{VC}\subset {\cal S}:=\{(\lambda^h,\lambda^g,\lambda^H,\lambda^G)\in{\cal R}_{VC}\mv \begin{array}[t]{l} \lambda^g_i\geq 0,\ i\in I^g,\quad
  \lambda^H_i\geq0,\  i\in I^{0-},\\
      \lambda^G_i\geq0,\ i\in I^{+0},\quad\lambda^G_i\geq0,\ \lambda^H_i\geq 0,\ i\in I^{00}\}.\end{array}\]
It was stated in \cite[Theorem 4]{AchKa08} that, under some weak constraint qualification, the condition $0\in\nabla f(\xb)+\nabla F(\xb)^T{\cal S}$ is a necessary condition for a local minimizer. Hence, if $\xb$ is $\Q$-stationary with respect to $(\emptyset,I^{00})$, then it fulfills also the necessary conditions of \cite[Theorem 5.3]{AchKa08}. From Lemma \ref{LemBetaStat} we obtain that $\xb$ is $\Q$-stationary with respect to $(\beta_1,\beta_2)$, if and only if it $\Q$-stationary with respect to $(\beta_2,\beta_1)$. Hence we conclude, that $\Q$-stationarity with respect to $(I^{00},\emptyset)$ implies both $\Q_M$-stationary and the necessary optimality conditions of \cite[Theorem 4]{AchKa08}.

Finally note that GGCQ for MPVC is equivalent to the condition MPVC-GCQ introduced in \cite{Ho09}, where it is also shown in \cite[Theorem 6.1.8]{Ho09} that under MPVC-GCQ any B-stationary point of MPVC is already M-stationary.

\section{Application to generalized equations}
Now we consider the problem
\begin{eqnarray}
\nonumber\min_{(x,y)\in\R^n\times\R^m} &&f(x,y)\\
\nonumber\text{subject to }&&0\in G(x,y)+\widehat N_{\Gamma}(y),\\
\label{EqGE}&&x\in C,
\end{eqnarray}
where the mappings $f:\R^n\times\R^m\to\R$, $G:\R^n\times\R^m\to\R^m$ are assumed to be continuously differentiable, $C$ is a closed subset of $\R^n$ and the set $\Gamma\subset\R^m$ is given by $C^2$ inequalities, i.e. $\Gamma:=\{y\in\R^m\mv g_i(y)\leq 0, i=1,\ldots,l\}$, where $g:\R^m\to\R^l$ is twice continuously differentiable. The constraints fit into our general setting \eqref{EqConstrSyst} with
\begin{equation}
  \label{EqConstrGE}F(x,y):=\left(\begin{array}{c}x\\(y,-G(x,y))\end{array}\right),\ D:=C\times \Gr \widehat N_\Gamma.
\end{equation}
We denote the feasible region of \eqref{EqGE} by $\Omega_{GE}$. We consider a point $(\xb,\yb)\in\Omega_{GE}$, fixed throughout this section, and we suppose  the following assumptions:
\begin{assumption}\label{AssGE}
\begin{enumerate}
  \item The tangent cone $T_C(\xb)$ is convex and $T_D(F(\xb,\yb))=T_C(\xb)\times T_{\Gr\widehat N_\Gamma}(\yb,-G(\xb,\yb))$.
  \item GGCQ holds at $(\xb,\yb)$.
  \item There is some $v\in\R^m$ such that
  \[ \nabla g_i(\yb)v<0,\ i\in\Ib:=\{i\mv g_i(\yb)=0\},\]
  i.e. MFCQ holds at $\yb$.
\end{enumerate}
\end{assumption}
The first assumption is e.g. fulfilled if $C$ is given by $C^1$-inequalities $h_i(x)\leq 0$ $i=1,\ldots,s$ and MFCQ is fulfilled at $\xb$. Note that the third assumption, that MFCQ holds at $\yb$, is only made in order to ease the presentation. We claim that it can be weakened to the weaker assumption of metric regularity in the vicinity of $\yb$ (cf. \cite{GfrOut14a}) or metric subregularity and the bounded extreme point property as used in the recent paper \cite{GfrMo15a}.

In what follows we set $\yba:=-G(\xb,\yb)$ and we define by
\[\Lb:=\{\lambda\in\widehat N_{\R_-^l}(g(\yb))\mv \nabla
g(\yb)^T\lambda=\yba\},\] the set of {\em Lagrange multipliers}
associated with $(\yb,\yba)$ and by
\[\Kb:=T_\Gamma(y)\cap (\yba)^\perp\] the {\em critical cone} to $\Gamma$ at $\yb$ with respect to
$\yba$. Thanks to the assumed MFCQ for the inequalities describing $\Gamma$ we have $T_\Gamma(\yb)=T_\Gamma^{\rm lin}(\yb)=\{v\mv \nabla g_i(\yb)v\leq 0,\ i \in\Ib\}$, $\widehat N_\Gamma(\yb)=\nabla g(\yb)^T\widehat N_{\R^l_-}(g(\yb))$ and that $\Lb\not=\emptyset$ is compact. Note that we do not require that the gradients $\nabla g_i(\yb)$, $i\in\Ib$ are linearly independent and hence the set $\Lb$ can contain more than one element.

Given a multiplier $\lambda\in\widehat N_{\R^l_-}(g(\yb))$ we introduce the
index sets
\[I^+(\lambda):=\{i\in\{1,\ldots,l\}\mv \lambda_i>0\},\
\bar I^0(\lambda):=\Ib\setminus I^+(\lambda).\]
Apart from them we will be
working with
\[\bar I^+:=\bigcup_{\lambda\in \Lb}
I^+(\lambda),\quad \bar I^0:=\Ib\setminus \bar I^+.\]
By convexity of the set $\Lb$ a multiplier $\lambda^+\in\Lb$ verifying $I^+(\lambda^+)=\bar I^+$ exists. Further we have
\begin{equation}
  \label{EqI_Plus}
  \big(\sum_{i\in\Ib}\nabla g_i(\yb)\gamma_i=0,\ \gamma_i\geq 0, i\in \bar I^0\big)\ \Rightarrow\ \gamma_i=0, i\in\bar I^0.
\end{equation}
Indeed, if there would exist numbers $\gamma_i$, $i\in \Ib$ violating \eqref{EqI_Plus}, then, by setting
\[\tilde \lambda_i=\begin{cases}\lambda^+_i+t\gamma_i,&i\in\Ib,\\
 0,&i\not\in\Ib\end{cases}\]
with $t>0$ sufficiently small, we would obtain the contradiction that $\bar I^+$ is strictly contained in $I^+(\tilde\lambda)$.

Note that $\Kb=\{v\mv \nabla g_i(\yb)v=0,i\in \bar I^+,\ \nabla g_i(\yb)v\leq0,i\in \bar I^0\}$, cf. \cite[Lemma 2]{GfrOut14a} and therefore
$\Kb^\circ=\{\sum_{i\in\Ib}\mu_i\nabla g_i(\yb)\mv \mu_i\geq 0,i\in\bar I^0\}$.

 For a direction $v\in\Kb$   we further introduce the
{\em directional multiplier set}
\[\Lb(v):= \mathop{\rm arg\,max}\limits_{\lambda\in
\Lb}v^T\nabla^2(\lambda^Tg)(\yb)v.\]

Application of  \cite[Exercise 13.17, Corollary 13.43(a)]{RoWe98} (see also \cite[Theorem 1]{GfrOut14a}) yields the representation
\begin{equation}\label{EqTanConeNormalConeMap}
T_{\Gr\widehat N_\Gamma}(\yb,\yba)=\{(v,v^\ast)\mv v\in\Kb,\ \exists\lambda\in\Lb(v): v^\ast \in \nabla^2(\lambda^Tg)(\yb)v+\widehat N_{\Kb}(v)\}.
\end{equation}
A description of the regular normal cone  $\widehat N_{\Gr\widehat N_\Gamma}(\yb,\yba)$ can be found in \cite[Theorem 2]{GfrOut14a}.

In general the structure of the tangent cone \eqref{EqTanConeNormalConeMap} is rather complicated. E.g., it is not known whether it always can be represented as the union of finitely many convex polyhedral cones or whether Assumption \ref{AssGE} is sufficient for M-stationarity of a B-stationary point.

In the following theorem we state a sufficient condition that the formula $\widehat N_{\Omega_{GE}}=\nabla F(\xb,\yb)^T\widehat N_D(F(\xb,\yb))$ is valid, i.e., that S-stationarity holds at $(\xb,\yb)$ provided it is B-stationary. We denote by
${\rm lin\,} T_C(\xb)$  the {\em lineality space} of $T_C(\xb)$, i.e. the largest linear space contained in $T_C(\xb)$. Since $T_C(\xb)$ is a closed convex cone by our assumption, we have
${\rm lin\,} T_C(\xb)=T_C(\xb)\cap (-T_C(\xb))$.

\begin{theorem}\label{ThGE_S_Stat1}Assume that Assumption \ref{AssGE} holds and that for every $w\in\Kb$, every $\lambda_w\in\Lb(w)$ and every $z\in\R^m$ verifying
\begin{eqnarray*}&&\nabla g_i(\yb)z=0,\ i\in \bar I^+\\
&&\nabla_x G(\xb,\yb)^Tz\in ({\rm lin\,} T_C(\xb))^\perp
\end{eqnarray*}
one has
\begin{equation}\label{EqOrthogonality}
z^T(\nabla_y G(\xb,\yb)+\nabla^2(\lambda_w^Tg)(\yb))w=0.\end{equation}
Further suppose that there exist some $\tilde u\in\ri T_C(\xb)$, $\tilde w\in\Kb$, $\tilde\lambda\in\Lb(\tilde w)$  and some reals $\tilde\mu_i$, $i\in\Ib$  such that
\begin{equation}\label{EqGE_Slater}\tilde \mu_i>0,\ i\in \bar I^0\text{ and } \nabla_xG(\xb,\yb)\tilde u+\nabla_yG(\xb,\yb)\tilde  w +\nabla^2(\tilde\lambda^Tg)(\yb)\tilde w+\sum_{i\in\Ib}\nabla g_i(\yb)\tilde \mu_i=0.
\end{equation}
Then one has
\begin{equation}\label{EqS_Stat_GE}
\widehat N_{\Omega_{GE}}=\left\{\left(\begin{array}{c}-\nabla_x G(\xb,\yb)^Tw+c^\ast\\-\nabla_y G(\xb,\yb)^Tw+w^\ast\end{array}\right)\mv c^\ast\in\widehat N_C(\xb), (w^\ast,w)\in \widehat N_{\Gr \widehat N_\Gamma}(\yb,\yba)\right\}.
\end{equation}
\end{theorem}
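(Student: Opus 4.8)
The plan is to first reduce \eqref{EqS_Stat_GE} to the identity $\widehat N_{\Omega_{GE}}=\nabla F(\xb,\yb)^T\widehat N_D(F(\xb,\yb))$, and then to establish this identity by an application of Corollary \ref{CorS_Stat}. For the reduction, write $F$ in the form \eqref{EqConstrGE}; one computes $\nabla F(\xb,\yb)^T(c^\ast,w^\ast,w)=(c^\ast-\nabla_xG(\xb,\yb)^Tw,\ w^\ast-\nabla_yG(\xb,\yb)^Tw)$, and since $D$ is a Cartesian product one has $\widehat N_D(F(\xb,\yb))=\widehat N_C(\xb)\times\widehat N_{\Gr\widehat N_\Gamma}(\yb,\yba)$, so the set on the right-hand side of \eqref{EqS_Stat_GE} is precisely $\nabla F(\xb,\yb)^T\widehat N_D(F(\xb,\yb))$. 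The inclusion ``$\supset$'' in \eqref{EqS_Stat_GE} is then immediate from \eqref{EqInclRegNormalCone}, so only ``$\subset$'' remains.

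To obtain ``$\subset$'' via Corollary \ref{CorS_Stat} one must exhibit a convex cone $Q$ with $Q\subset T_D(F(\xb,\yb))$ (which, by Remark \ref{RemLargeQ}, forces \eqref{EqCondQ}), a cone $\tilde{\cal T}\subset\Tb(Q)$ with $\tilde{\cal T}^\circ=\widehat N_D(F(\xb,\yb))$, and a direction at which the relative-interior condition of the corollary holds. For $Q$ I would take $Q:=\lin T_C(\xb)\times(\{0\}^m\times\Kb^\circ)$; this is a convex cone, and evaluating \eqref{EqTanConeNormalConeMap} at $v=0$ (where $\Lb(0)=\Lb$ and $\widehat N_{\Kb}(0)=\Kb^\circ$) together with Assumption \ref{AssGE}(1) shows $\{0\}^m\times\Kb^\circ\subset T_{\Gr\widehat N_\Gamma}(\yb,\yba)$, hence $Q\subset T_D(F(\xb,\yb))$.

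For $\tilde{\cal T}$ I would take the ``linearized'' convex subcone of $T_D(F(\xb,\yb))$ obtained by fixing the multiplier $\lambda^+\in\Lb$ with $I^+(\lambda^+)=\bar I^+$ and relaxing the $v$-dependent objects $\Lb(v)$ and $\widehat N_{\Kb}(v)$ in the representation \eqref{EqTanConeNormalConeMap}, the coupling in the $C$-variable being absorbed through the $\lin T_C(\xb)$-slot of $Q$. Two facts then have to be checked. First, $\tilde{\cal T}$ is large enough that $\cl\co\tilde{\cal T}=\cl\co T_D(F(\xb,\yb))$, which gives $\tilde{\cal T}^\circ=(T_D(F(\xb,\yb)))^\circ=\widehat N_D(F(\xb,\yb))$; this is a direct computation from \eqref{EqTanConeNormalConeMap}, the identity $\Kb^\circ=\{\sum_{i\in\Ib}\mu_i\nabla g_i(\yb)\mv\mu_i\geq0,\ i\in\bar I^0\}$ and the product structure of $D$. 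Second — and this is the heart of the argument — one must prove $\tilde{\cal T}\subset\Tb(Q)$, i.e. for each $t\in\tilde{\cal T}$ produce a direction $(u,w)$ and an element $q\in Q$ with $\nabla F(\xb,\yb)(u,w)=t-q\in T_D(F(\xb,\yb))$. This is precisely the step in which the orthogonality hypothesis \eqref{EqOrthogonality} is used: it guarantees that the second-order terms $\nabla^2(\lambda^Tg)(\yb)w$ act consistently (for every $\lambda\in\Lb(w)$ and modulo $(\lin T_C(\xb))^\perp$) on the directions entering the construction, which is exactly what allows the reaching vectors $(u,w)$ to be produced.

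Finally, to apply Corollary \ref{CorS_Stat} one needs a direction $(u,w)$ with $\nabla F(\xb,\yb)(u,w)\in\ri\co\tilde{\cal T}$, and this is supplied by the Slater condition \eqref{EqGE_Slater}: the requirement $\tilde u\in\ri T_C(\xb)$ takes care of the $C$-component, while $\tilde w\in\Kb$, $\tilde\lambda\in\Lb(\tilde w)$ and the reals $\tilde\mu_i>0$, $i\in\bar I^0$, satisfying the displayed equation place the $\Gr\widehat N_\Gamma$-component in the relative interior of the relevant face (here \eqref{EqI_Plus} guarantees that the multiplier representation is non-degenerate). Corollary \ref{CorS_Stat} then yields $\widehat N_{\Omega_{GE}}=\nabla F(\xb,\yb)^T\widehat N_D(F(\xb,\yb))$, i.e. \eqref{EqS_Stat_GE}. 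I expect the inclusion $\tilde{\cal T}\subset\Tb(Q)$ to be the main obstacle: it requires an explicit and somewhat delicate construction of the reaching directions together with careful bookkeeping with $\Lb$, the directional multiplier sets $\Lb(v)$, the faces of the critical cone $\Kb$ and the second-order terms, and it is here that all of Assumption \ref{AssGE} and condition \eqref{EqOrthogonality} genuinely enter.
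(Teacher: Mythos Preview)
Your overall strategy---reduce \eqref{EqS_Stat_GE} to $\widehat N_{\Omega_{GE}}=\nabla F(\xb,\yb)^T\widehat N_D(F(\xb,\yb))$ and establish this via Corollary~\ref{CorS_Stat}, with \eqref{EqOrthogonality} driving the ``reaching'' step and \eqref{EqGE_Slater} supplying the relative-interior direction---is exactly the paper's route. The gap is in your choice of $\tilde{\cal T}$. If by ``relaxing $\Lb(v)$ and $\widehat N_{\Kb}(v)$'' you mean replacing $\Lb(v)$ by the fixed $\{\lambda^+\}$ and $\widehat N_{\Kb}(v)$ by $\Kb^\circ$, the resulting cone is \emph{not} a subcone of $T_D(F(\xb,\yb))$: for $v\neq0$ one has $\widehat N_{\Kb}(v)\subsetneq\Kb^\circ$, and $\lambda^+$ need not lie in $\Lb(v)$, so pairs $(v,\nabla^2((\lambda^+)^Tg)(\yb)v+k^\ast)$ with $k^\ast\in\Kb^\circ$ generally fall outside $T_{\Gr\widehat N_\Gamma}(\yb,\yba)$ by \eqref{EqTanConeNormalConeMap}. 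Since $\Tb(Q)\subset T_D(F(\xb,\yb))$ by its very definition, the required inclusion $\tilde{\cal T}\subset\Tb(Q)$ then fails and Corollary~\ref{CorS_Stat} cannot be invoked. (There is also no convex cone strictly between $T_D$ and $\co T_D$ that would do the job, since $T_D$ here is genuinely nonconvex.)

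The paper resolves this by the simplest possible choice: $\tilde{\cal T}:=T_D(F(\xb,\yb))$ itself (so $\tilde{\cal T}^\circ=\widehat N_D(F(\xb,\yb))$ is automatic) together with $Q:=T_C(\xb)\times\{0\}^m\times\Kb^\circ$ (the full $T_C(\xb)$, not $\lin T_C(\xb)$), and proves $\Tb(Q)=T_D(F(\xb,\yb))$ directly. Given $(t,w,w^\ast)\in T_D$ with $w^\ast=\nabla^2(\lambda_w^Tg)(\yb)w+n^\ast$, hypothesis \eqref{EqOrthogonality} says precisely that $(\nabla_yG+\nabla^2(\lambda_w^Tg))w\in\range A^T+\nabla_xG(\lin T_C(\xb))$ (with $A$ the matrix of rows $\nabla g_i(\yb)$, $i\in\bar I^+$), so one writes it as $-\tilde k^\ast-\nabla_xG\,l$ with $l\in\lin T_C(\xb)$ and $\tilde k^\ast\in\lin\Kb^\circ$; then $u:=l$, $v:=w$, $t_q:=t-l$, $k^\ast:=n^\ast-\tilde k^\ast$ verify the reaching condition. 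Here $t_q=t-l\in T_C(\xb)$ is exactly why the full $T_C(\xb)$ must sit in $Q$; with only $\lin T_C(\xb)$ this step breaks, since $t\in T_C(\xb)$ is arbitrary.

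The relative-interior verification also needs more than you sketch. One first shows $\co T_D(F(\xb,\yb))=(\co{\cal S}_1)+{\cal S}_2$ with ${\cal S}_1=\{(0,w,\nabla^2(\lambda^Tg)(\yb)w)\mv w\in\Kb,\lambda\in\Lb(w)\}$ and ${\cal S}_2=T_C(\xb)\times\{0\}^m\times\Kb^\circ$, picks an arbitrary element of $\ri\co{\cal S}_1$, applies the reaching construction above to each convex-combination summand, and then \emph{adds} the Slater data $(\tilde u,\tilde w,\tilde\lambda,\tilde\mu)$ so as to land in $\ri\co{\cal S}_1+\ri{\cal S}_2=\ri\co T_D(F(\xb,\yb))$. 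The conditions $\tilde u\in\ri T_C(\xb)$ and $\tilde\mu_i>0$, $i\in\bar I^0$, together with the equation in \eqref{EqGE_Slater}, are what push the ${\cal S}_2$-part into $\ri{\cal S}_2$; the ${\cal S}_1$-part requires the reaching step, not $(\tilde u,\tilde w)$ alone.
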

\begin{proof} By Assumption \ref{AssGE} we obtain that
\[T^{\rm lin}_{\Omega_{GE}}(\xb,\yb)=\{(u,v)\mv u\in T_C(\xb), (v,-\nabla_x G(\xb,\yb) u-\nabla_y G(\xb,\yb)v)\in T_{\Gr\widehat N_\Gamma}(\yb,\yba)\}\]
and, together with \eqref{EqTanConeNormalConeMap}, that $Q:=T_C(\xb)\times\{0\}^m\times \Kb^\circ$ is a convex cone contained in $T_D(\xb,\yb,\yba)$.
We shall apply Corollary \ref{CorS_Stat} with this cone $Q$  by showing  that $T_D(\xb,\yb,\yba)=\Tb(Q)$ and that there is some $(u,v)$ such that $\nabla F(\xb,\yb)(u,v)\in\ri\co \Tb(Q)$. In a first step we show  $T_D(\xb,\yb,\yba)=\Tb(Q)$, i.e. we prove that for every $(t,w,w^\ast)\in T_D(\xb,\yb,\yba)$ there is some $q:=(t_q,0,k^\ast)\in Q$  and some $(u,v)\in\R^n\times\R^m$ such that
\begin{equation}\label{EqApplCorrS_Stat}\nabla F(\xb,\yb)\left(\begin{array}{c}u\\v\end{array}\right)=\left(\begin{array}{c}u\\(v,-\nabla_x G(\xb,\yb)u-\nabla_yG(\xb,\yb)v)\end{array}\right)=\left(\begin{array}{c}t-t_q\\(w,w^\ast-k^\ast)\end{array}\right)\in T_D(\xb,\yb,\yba).
\end{equation}
Let $(t,w,w^\ast)\in T_D(\xb,\yb,\yba)$ be arbitrarily fixed and let $w^\ast=\nabla^2(\lambda_w^Tg)(\yb)w+n^\ast$ with $\lambda_w\in\Lb(w)$ and $n^\ast\in \widehat N_{\Kb}(w)$.

Denoting by $A$ the $\vert \bar I^+\vert\times m$ matrix, whose rows are given by $\nabla g_i(\yb)$, $i\in\bar I^+$,  we obtain from \eqref{EqOrthogonality}
that
\begin{eqnarray*}(\nabla_y G(\xb,\yb)+\nabla^2(\lambda_w^Tg)(\yb))w&\in&\left(\ker A\cap (\nabla_x G(\xb,\yb)^{-T}({\rm lin\,} T_C(\xb))^\perp)\right)^\perp\\
&=&\range A^T + \nabla_x G(\xb,\yb)({\rm lin\,} T_C(\xb))\\
&=&-\range A^T - \nabla_x G(\xb,\yb)({\rm lin\,} T_C(\xb)).
\end{eqnarray*}
Hence there is some $\tilde k^\ast\in \range A^T=\Span\{\nabla g_i(\yb)\mv i\in \bar I^+\}$ and some $l\in {\rm lin\,} T_C(\xb)$ such that
$(\nabla_y G(\xb,\yb)+\nabla^2(\lambda_w^Tg)(\yb))w= -\tilde k^\ast -\nabla_x G(\xb,\yb)l$. Setting $t_q:=t-l$, $u:=l$, $v:=w$ and $k^\ast :=n^\ast-\tilde k^\ast$ and taking into account that $n^\ast\in\widehat N_{\Kb}(w)=\Kb^\circ\cap \{w\}^\perp\subset\Kb^\circ$ and that $\Span\{\nabla g_i(\yb)\mv i\in \bar I^+\}$ is exactly the lineality space of $\Kb^\circ$, we have $t_q\in T_C(\xb)$, $k^\ast\in \Kb^\circ$ and
\begin{eqnarray*}-\nabla_x G(\xb,\yb)u-\nabla_yG(\xb,\yb)v=-\nabla_x G(\xb,\yb)l-\nabla_yG(\xb,\yb)w
=\nabla^2(\lambda_w^Tg)(\yb)w+\tilde k^\ast= w^\ast-k^\ast.
\end{eqnarray*}
Thus
\begin{eqnarray*}\lefteqn{\left(\begin{array}{c}u\\(v,-\nabla_x G(\xb,\yb)u-\nabla_yG(\xb,\yb)v)\end{array}\right)=\left(\begin{array}{c}t-t_q\\(w,w^\ast-k^\ast)
\end{array}\right)}\\
&=&\left(\begin{array}{c}l\\(w, \nabla^2(\lambda_w^Tg)(\yb)w +\tilde k^\ast) \end{array}\right)\in T_C(\xb)\times T_{\Gr\widehat N_\Gamma}(\yb,\yba))=T_D(\xb,\yb,\yba)
\end{eqnarray*}
verifying \eqref{EqApplCorrS_Stat}, and therefore $T_D(\xb,\yb,\yba)=\Tb(Q)$ holds.

In order to show that there are $(u,v)$ such that $\nabla F(\xb,\yb)(u,v)\in\ri\co \Tb(Q)$, we observe first that
\begin{equation}\label{EqConvT_D}\co \Tb(Q)=\co T_D(\xb,\yb,\yba)=(\co {\cal S}_1)+{\cal S}_2,\end{equation}
where ${\cal S}_1:=\{(0,w,\nabla^2(\lambda^Tg)(\yb)w)\mv w\in\Kb,\lambda\in\Lb(w)\}$ and  ${\cal S}_2:=T_C(\xb)\times\{0\}^m\times \Kb^\circ$. Indeed, by Assumption \ref{AssGE} and \eqref{EqTanConeNormalConeMap} it can be easily seen that $T_D(\xb,\yb,\yba)\subset {\cal S}_1+{\cal S}_2$
and by convexity of ${\cal S}_2$ the inclusion
\[\co T_D(\xb,\yb,\yba)\subset\co ({\cal S}_1+{\cal S}_2)=(\co {\cal S}_1)+{\cal S}_2\]
readily follows. On the other hand we have ${\cal S}_1,{\cal S}_2\subset T_D(\xb,\yb,\yba)$ implying $\co{\cal S}_1,{\cal S}_2\subset \co T_D(\xb,\yb,\yba)$ and, together with the fact that $\co T_D(\xb,\yb,\yba)$ is a convex cone, the reverse inclusion
\[\co T_D(\xb,\yb,\yba)\supset(\co {\cal S}_1)+{\cal S}_2\]
follows as well and the validity of \eqref{EqConvT_D} is shown.

Now consider $(0,w,w^\ast)\in\ri\co{\cal S}_1$. Then there are nonnegative coefficients $\alpha_j\geq 0$, $j=1,\ldots,s$, $\sum_{j=1}^s\alpha_j=1$ and elements $(0,w_j,w_j^\ast)\in {\cal S}_1$ such that $(0,w,w^\ast)=\sum_{j=1}^s\alpha_j(0,w_j,w_j^\ast)$. Then, by proceeding as before, for every $j=1,\ldots,s$ we can find $\tilde k_j^\ast\in \Span\{\nabla g_i(\yb)\mv i\in \bar I^+\}$ and $l_j\in {\rm lin\,} T_C(\xb)$ such that
\[-\nabla_x G(\xb,\yb)l_j-\nabla_yG(\xb,\yb)w_j =w_j^\ast+\tilde k_j^\ast\]
By setting $l:=\sum_{j=1}^s\alpha_jl_j$, $u:=l+\tilde u$, $v:=w+\tilde w$, $\tilde w^\ast:=\nabla^2(\tilde\lambda^Tg)(\yb)\tilde w$, $k^\ast:=\sum_{j=1}^s\alpha_j\tilde k_j^\ast+\sum_{i\in\Ib}\nabla g_i(\yb)\tilde \mu_i$,  we obtain
\begin{eqnarray*}\lefteqn{\nabla F(\xb,\yb)\left(\begin{array}{c}u\\v\end{array}\right)}\\
&=&\left(\begin{array}{c}u\\(v,-\nabla_x G(\xb,\yb)u-\nabla_y G(\xb,\yb)v)\end{array}\right)
=\left(\begin{array}{c}0\\(w+\tilde w, w^\ast+\tilde w^\ast) \end{array}\right)+ \left(\begin{array}{c}\tilde u+ l\\(0,k^\ast) \end{array}\right).
\end{eqnarray*}
Since $\sum_{i\in\Ib}\nabla g_i(\yb)\tilde \mu_i\in\ri \Kb^\circ$ by \cite[Theorem 6.6]{Ro70}, $\sum_{j=1}^s \alpha_j \tilde k_j^\ast\in\Span\{\nabla g_i(\xb)\mv i\in\bar I^+\}\subset\lin \Kb^\circ$, $\tilde u\in\ri T_C(\xb)$ and $l\in\lin T_C(\xb)$, we conclude
\begin{eqnarray*}\left(\begin{array}{c}\tilde u+ l\\(0, k^\ast) \end{array}\right)\in \ri {\cal S}_2 +\lin {\cal S}_2=\ri {\cal S}_2.\end{eqnarray*}
Further, since $(0,w,w^\ast)\in\ri \co{\cal S}_1$, $(0,\tilde w,\tilde w^\ast)\in{\cal S}_1$ and ${\cal S}_1$ is a cone, we obtain $(0,w+\tilde w,w^\ast+\tilde w^\ast)\in \ri\co{\cal S}_1$.
Thus, by taking into account \cite[Corollary 6.6.2]{Ro70},
\begin{eqnarray*}
\nabla F(\xb,\yb)\left(\begin{array}{c}u\\v\end{array}\right)&\in&\ri \co {\cal S}_1+\ri {\cal S}_2 =\ri((\co {\cal S}_1)+ {\cal S}_2)=\ri \co T_D(\xb,\yb,\yba)\end{eqnarray*}
and this finishes the proof.
\end{proof}
\begin{remark}
Theorem \ref{ThGE_S_Stat1} improves \cite[Theorem 5]{GfrOut14a}, where the assumption
\[\nabla_x G(\xb,\yb)(\lin T_C(\xb))+\Span\{\nabla g_i(\xb)\mv i\in\bar I^+\}=\R^m\]
is used. Note that this assumption is equivalent to $\{0\}^m=(\Span\{\nabla g_i(\xb)\mv i\in\bar I^+\})^\perp\cap \big(\nabla_x G(\xb,\yb)(\lin T_C(\xb))\big)^\perp$ and thus the only element $z$ with $\nabla g_i(\xb)z=0$, $i\in\bar I^+$ and $\nabla_x G(\xb,\yb)^Tz\in (\lin T_C(\xb))^\perp$ is $z=0$ and therefore \eqref{EqOrthogonality} trivially holds. Further, this assumption also implies \eqref{EqGE_Slater}, because for arbitrary $u\in\ri T_C(\xb)$ and $\tilde\mu_i>0$, $i\in\bar I^0$, we can find $l\in \lin T_C(\xb)$ and $\tilde\mu_i$, $i\in\bar I^+$ with $\nabla_x G(\xb,\yb)l+\sum_{i\in\bar I^+}\tilde\mu_i\nabla g_i(\xb)=-\nabla_x G(\xb,\yb) u-\sum_{i\in\bar I^0}\tilde\mu_i\nabla g_i(\xb)$ and now \eqref{EqGE_Slater} follows with $\tilde u= u+l\in \ri T_C(\xb)$, $\tilde w=0$.
\end{remark}
Next we consider $\Q$-stationarity  for the problem \eqref{EqGE} under  an additional assumption which allows a simplified description of the contingent cone  $T_{\Gr\widehat N_\Gamma}(\yb,\yba)$ as stated in \cite[Theorem 3]{GfrOut14a}.
%
\begin{theorem}\label{ThNormConeMapConstLambd} Assume that Assumption \ref{AssGE}(3.) holds at $\yb$.
Further assume that $\Lb(v_1)=\Lb(v_2)$ $\forall 0\not=v_1,v_2\in \Kb$ and let $\bar\lambda$ be an arbitrary
multiplier from $\Lb(v)$ for some $0\not= v\in\Kb$, if
$\Kb\not=\{0\}$ and $\bar\lambda\in\Lb$ otherwise. Then
 \begin{equation}\label{EqTangConeConstLambda}
 T_{\Gr \widehat N_\Gamma}(\yb,\yba)=\{(v,v^\ast)\mv v\in \Kb,\ v^\ast\in \nabla^2(\bar\lambda^Tg)(\yb)v+\widehat N_{\Kb}(v)\}
 \end{equation}
 and
 \begin{equation}\label{EqNormalConeConstLambda}
 \widehat N_{\Gr \widehat N_\Gamma}(\yb,\yba)=\{(w^\ast,w)\mv
  w\in\Kb, w^\ast\in -\nabla^2(\bar\lambda^Tg)(\yb)w+ \Kb^\circ\}.
 \end{equation}
\end{theorem}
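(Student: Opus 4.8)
The plan is to derive the formula \eqref{EqTangConeConstLambda} as a direct specialization of the general representation \eqref{EqTanConeNormalConeMap}, and then obtain \eqref{EqNormalConeConstLambda} by polarity, exploiting the convexity of the set appearing on the right-hand side of \eqref{EqTangConeConstLambda}. The starting point is the already-established identity
\[
T_{\Gr\widehat N_\Gamma}(\yb,\yba)=\{(v,v^\ast)\mv v\in\Kb,\ \exists\lambda\in\Lb(v):\ v^\ast\in\nabla^2(\lambda^Tg)(\yb)v+\widehat N_{\Kb}(v)\}.
\]
First I would handle the trivial direction $v=0$: here $\Kb\ni 0$, $\widehat N_{\Kb}(0)=\Kb^\circ$, the term $\nabla^2(\lambda^Tg)(\yb)\cdot 0=0$, and so both the left- and right-hand sets in \eqref{EqTangConeConstLambda} reduce to $\{0\}\times\Kb^\circ$ independently of which multiplier is selected; thus the two formulas agree on the slice $v=0$.

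For $v\neq 0$ the key point is the hypothesis $\Lb(v_1)=\Lb(v_2)$ for all nonzero $v_1,v_2\in\Kb$: this says the directional multiplier set is one and the same set, call it $\Lb^\ast$, for every nonzero critical direction, and $\bar\lambda\in\Lb^\ast$ by choice. The subtlety I expect to be the main obstacle is that \eqref{EqTanConeNormalConeMap} only furnishes \emph{some} $\lambda\in\Lb(v)=\Lb^\ast$ with $v^\ast\in\nabla^2(\lambda^Tg)(\yb)v+\widehat N_{\Kb}(v)$, whereas \eqref{EqTangConeConstLambda} demands this for the \emph{specific} $\bar\lambda$. To bridge this, I would argue that for any $v\in\Kb$ and any two multipliers $\lambda,\lambda'\in\Lb$, the difference $(\lambda-\lambda')$ satisfies $\sum_{i\in\Ib}(\lambda_i-\lambda'_i)\nabla g_i(\yb)=0$ (both give $\yba$), and moreover on the critical cone $\Kb$ one has $\nabla g_i(\yb)v=0$ for $i\in\bar I^+$ while the optimality (argmax) defining $\Lb^\ast$ forces the quadratic forms $v^T\nabla^2(\lambda^Tg)(\yb)v$ to coincide for all $\lambda\in\Lb^\ast$; combined with $\widehat N_{\Kb}(v)=\Kb^\circ\cap\{v\}^\perp$ being a face of $\Kb^\circ$ that absorbs the span of the $\nabla g_i(\yb)$, $i\in\bar I^+$, this should show $\nabla^2(\lambda^Tg)(\yb)v+\widehat N_{\Kb}(v)$ is independent of the choice $\lambda\in\Lb^\ast$. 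Hence it equals $\nabla^2(\bar\lambda^Tg)(\yb)v+\widehat N_{\Kb}(v)$, and \eqref{EqTangConeConstLambda} follows. (This is precisely the content of \cite[Theorem 3]{GfrOut14a} referenced just above the statement, so I would lean on it.)

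Finally, to obtain \eqref{EqNormalConeConstLambda} I would take polars: $\widehat N_{\Gr\widehat N_\Gamma}(\yb,\yba)=(T_{\Gr\widehat N_\Gamma}(\yb,\yba))^\circ$ by definition. Writing $M:=\nabla^2(\bar\lambda^Tg)(\yb)$ for brevity, the set in \eqref{EqTangConeConstLambda} is $\{(v,Mv+k^\ast)\mv v\in\Kb,\ k^\ast\in\widehat N_{\Kb}(v)\}$, which one checks is a convex cone (using convexity of $\Kb$ and the standard fact that $\{(v,k^\ast)\mv v\in\Kb,\ k^\ast\in\widehat N_{\Kb}(v)\}=\Gr\widehat N_{\Kb}$ is itself a convex cone because $\Kb$ is a convex cone, so $\widehat N_{\Kb}(v)=\Kb^\circ\cap\{v\}^\perp$ and the graph is the polar-cone relation). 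A pair $(w^\ast,w)$ lies in the polar iff $\skalp{w^\ast,v}+\skalp{w,Mv+k^\ast}\le 0$ for all $v\in\Kb$, $k^\ast\in\widehat N_{\Kb}(v)$. Taking $v=0$ forces $w\in(\Kb^\circ)^\circ=\Kb$ (since $\Kb$ is closed convex). Given $w\in\Kb$, the constraint becomes $\skalp{w^\ast+M^Tw,v}+\skalp{w,k^\ast}\le 0$; since $k^\ast\in\Kb^\circ\cap\{v\}^\perp$ the second term vanishes when we pair with $v$, and ranging $v$ over $\Kb$ shows $w^\ast+M^Tw\in\Kb^\circ$, i.e. $w^\ast\in-M^Tw+\Kb^\circ=-\nabla^2(\bar\lambda^Tg)(\yb)w+\Kb^\circ$ using symmetry of the Hessian. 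The converse inclusion is immediate by reversing these estimates. This yields exactly \eqref{EqNormalConeConstLambda} and completes the proof.
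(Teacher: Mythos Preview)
The paper does not prove this theorem at all: it is stated as a quotation of \cite[Theorem 3]{GfrOut14a} (with the normal-cone formula covered by \cite[Theorem 2]{GfrOut14a}), so there is no in-paper argument to compare against. Your plan---specialize the general representation \eqref{EqTanConeNormalConeMap} under the constant-directional-multiplier hypothesis and then pass to polars---is exactly the natural route and is what the cited reference does; in particular your explicit deferral to \cite[Theorem 3]{GfrOut14a} for the invariance of $\nabla^2(\lambda^Tg)(\yb)v+\widehat N_{\Kb}(v)$ over $\lambda\in\Lb(v)$ is appropriate, since that step needs more than the loose ``absorption'' remark you sketch.

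One small correction in your polarity computation: in the line ``since $k^\ast\in\Kb^\circ\cap\{v\}^\perp$ the second term vanishes when we pair with $v$'', the second term is $\skalp{w,k^\ast}$, not $\skalp{v,k^\ast}$, and it need not vanish. The clean way to extract $w^\ast+Mw\in\Kb^\circ$ is simply to take $k^\ast=0\in\widehat N_{\Kb}(v)$; the converse inclusion then uses $\skalp{w,k^\ast}\le 0$ (from $w\in\Kb$, $k^\ast\in\Kb^\circ$) exactly as you indicate. With that fix the argument is complete.
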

The assumption $\Lb(v_1)=\Lb(v_2)$ $\forall 0\not=v_1,v_2\in \Kb$ is for instance fulfilled, if the inequalities $g_i(y)\leq 0$ fulfill the {\em constant rank constraint qualification} at $\yb$, see e.g. \cite[Corollary 3.2]{HenKruOut13}.

In what follows we will assume that the assumptions of Theorem \ref{ThNormConeMapConstLambd} hold and that the tangent cone $T_C(\xb)$ is a convex polyhedral cone. For every index set $\beta\subset \bar I^0$ we define the convex polyhedral cone
\begin{equation}\label{EqQ_MPEC}
  Q^\beta_{GE}:=T_C(\xb)\times \{(v,v^\ast)\mv (v, v^\ast- \nabla^2(\bar \lambda^Tg)(\yb)v)\in K_\beta \times K_{\beta}^\ast\},
\end{equation}
where
\[K_\beta:=\left\{v\mv \nabla g_i(\yb)v\begin{cases}=0,& i\in \bar I^+\cup \beta,\\
\leq 0,& i\in\bar I^0\setminus\beta\end{cases}\right\},\quad K_{\beta}^\ast:=\left\{\sum_{i\in \bar I^+\cup\beta}\mu_i\nabla g_i(\yb)\mv \mu_i\geq 0,\ i\in \beta\right\}.\]
Then we have
\[(K_\beta\times K_\beta^\ast)^\circ=\left\{\sum_{i\in\Ib}\mu_i\nabla g_i(\yb)\mv \mu_i\geq 0, i\in\bar I^0\setminus\beta\right\}\times\left\{z\mv \nabla g_i(\yb)z\begin{cases}=0,& i\in \bar I^+,\\
\leq 0,& i\in\beta\end{cases}\right\}\]
and
\begin{eqnarray*}(Q^\beta_{GE})^\circ&=&\widehat N_C(\xb)\times \left(\begin{array}{cc}I&-\nabla^2(\lb^Tg)(\yb)\\0&I\end{array}\right)(K_\beta \times K_{\beta}^\ast)^\circ\\
&=&\widehat N_C(\xb)\times \{(w^\ast,w)\mv (w^\ast+\nabla^2(\lb^Tg)(\yb)w,w)\in(K_\beta \times K_{\beta}^\ast)^\circ\}.\end{eqnarray*}
It is easy to see that under the assumptions of Theorem \ref{ThNormConeMapConstLambd} we have
\begin{equation}\label{EqTan_GE_Union_Q}T_D(F(\xb,\yb))=\bigcup_{\beta\subset \bar I^0}Q^\beta_{GE}\end{equation}
and thus
\[\widehat N_D(F(\xb,\yb))=\widehat N_C(\xb)\times \widehat N_{\Gr \widehat N_\Gamma}(\yb,\yba)=\bigcap_{\beta\subset \bar I^0}(Q^\beta_{GE})^\circ.\]
Note that for every pair $(\beta_1,\beta_2)\subset \bar I^0\times\bar I^0$ the cones $(Q^{\beta_1}_{GE},Q^{\beta_2}_{GE})$ fulfill \eqref{EqCondQi} because they are convex polyhedral cones.

\begin{proposition}\label{PropGE}
  Let $(\xb,\yb)\in\Omega_{GE}$ and assume in addition to Assumption \ref{AssGE} that the contingent cone $T_C(\xb)$ is polyhedral and $\Lb(v_1)=\Lb(v_2)\ \forall 0\not=v_1,v_2\in\Kb$. Then for every pair $(\beta_1,\beta_2)\subset \bar I^0\times\bar I^0$ we have
\begin{eqnarray}\label{EqInclN_MPGE}
  \widehat N_{\Omega_{GE}}(\xb,\yb)\subset \nabla F(\xb,\yb)^T\tilde N^{\beta_1,\beta_2}_{GE}=\left\{\left(\begin{array}{c}\eta_C -\nabla_xG(\xb,\yb)^Tq\\q^\ast-\nabla_yG(\xb,\yb)^Tq\end{array}\right)\mv
  (\eta_C,q^\ast,q)\in\tilde N^{\beta_1,\beta_2}_{GE}\right\}=:   M^{\beta_1,\beta_2}_{GE}
\end{eqnarray}
where
\begin{subequations}
\begin{align}
\nonumber  \tilde N^{\beta_1,\beta_2}_{GE}
:=\Big\{&(\eta_C,q^\ast,q)\in \widehat N_C(\xb)\times\R^m\times\R^m\mv
\exists r\in\R^m,\mu_i^r\mu_i^q, i\in\Ib:\\
    &q^\ast+\nabla^2(\bar\lambda^Tg)(\yb)q=\sum_{i\in\Ib}\mu_i^q\nabla g_i(\yb),\label{SubEq_q_star}\\
    &\nabla g_i(\yb)q=0,\ i\in\bar I^+,\ \nabla g_i(\yb)q\leq 0,\ i\in\beta_1,\ \mu_i^q\geq 0,\ i\in\bar I^0\setminus \beta_1,\label{SubEqBeta1}\\
     &\nabla g_i(\yb)r=0,\ i\in\bar I^+,\ \nabla g_i(\yb)q\leq \nabla g_i(\yb)r,\ i\in\beta_2,\ \mu_i^q\geq \mu_i^r,\ i\in\bar I^0\setminus \beta_2, \label{SubEqBeta2}\\
     &\nabla_y G(\xb,\yb)^Tr+\nabla^2(\bar\lambda^Tg)(\yb)r=\sum_{i\in\Ib}\mu_i^r\nabla g_i(\yb),\label{SubEq_r}\\
     &\eta_C\in \nabla_x G(\xb,\yb)^Tr+\widehat N_C(\xb)\Big\}\label{SubEqEtaC}
\end{align}
\end{subequations}
and $\bar\lambda$ is an arbitrarily fixed multiplier from $\Lb(v)$ for some $0\not=v\in\Kb$, if $\Kb\not=\{0\}$ and $\bar\lambda\in\Lb$ otherwise.
\end{proposition}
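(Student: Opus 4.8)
The plan is to apply Theorem \ref{ThQ} with the pair $(Q_1,Q_2)=(Q^{\beta_1}_{GE},Q^{\beta_2}_{GE})$ of convex polyhedral cones from \eqref{EqQ_MPEC}. As already recorded before the proposition, these cones are contained in $T_D(F(\xb,\yb))$ by \eqref{EqTan_GE_Union_Q} and, being polyhedral, they satisfy condition \eqref{EqCondQi} (hence \eqref{EqAssQ}) by Proposition \ref{PropPolarCones}. Therefore \eqref{EqInclRegNormalConeQ1_Q2} gives
\[
\widehat N_{\Omega_{GE}}(\xb,\yb)\subset\nabla F(\xb,\yb)^T\Big((Q^{\beta_1}_{GE})^\circ\cap\big(\ker\nabla F(\xb,\yb)^T+(Q^{\beta_2}_{GE})^\circ\big)\Big),
\]
so the whole proposition reduces to identifying the set $(Q^{\beta_1}_{GE})^\circ\cap(\ker\nabla F(\xb,\yb)^T+(Q^{\beta_2}_{GE})^\circ)$ with $\tilde N^{\beta_1,\beta_2}_{GE}$ and computing its image under $\nabla F(\xb,\yb)^T$. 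This is exactly the analogue of the equation \eqref{EqN_beta_CC} established in the proof of Proposition \ref{PropMPCC} (and of the corresponding step in Proposition \ref{PropMPVC}).

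First I would record the elementary facts coming from \eqref{EqConstrGE}: since $\nabla F(\xb,\yb)(u,v)=(u,(v,-\nabla_x G(\xb,\yb)u-\nabla_y G(\xb,\yb)v))$, one has
\[
\nabla F(\xb,\yb)^T(\eta_C,q^\ast,q)=\big(\eta_C-\nabla_x G(\xb,\yb)^Tq,\ q^\ast-\nabla_y G(\xb,\yb)^Tq\big),
\]
which immediately yields the claimed form of $M^{\beta_1,\beta_2}_{GE}$ once $\tilde N^{\beta_1,\beta_2}_{GE}$ is known, and
\[
\ker\nabla F(\xb,\yb)^T=\big\{\big(\nabla_x G(\xb,\yb)^Tr,\ \nabla_y G(\xb,\yb)^Tr,\ r\big)\mv r\in\R^m\big\}.
\]

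Then I would take an arbitrary $(\eta_C,q^\ast,q)$ in $(Q^{\beta_1}_{GE})^\circ\cap(\ker\nabla F(\xb,\yb)^T+(Q^{\beta_2}_{GE})^\circ)$, write it as $\kappa+\nu$ with $\kappa=(\nabla_x G(\xb,\yb)^Tr,\nabla_y G(\xb,\yb)^Tr,r)\in\ker\nabla F(\xb,\yb)^T$ and $\nu=(\eta_C-\nabla_x G(\xb,\yb)^Tr,\ q^\ast-\nabla_y G(\xb,\yb)^Tr,\ q-r)\in(Q^{\beta_2}_{GE})^\circ$, and translate the two membership relations through the explicit description of $(Q^\beta_{GE})^\circ$ in terms of $(K_\beta\times K^\ast_\beta)^\circ$ derived just before the proposition. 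Membership in $(Q^{\beta_1}_{GE})^\circ$ produces $\eta_C\in\widehat N_C(\xb)$, the representation \eqref{SubEq_q_star} with the sign conditions $\mu^q_i\ge0$, $i\in\bar I^0\setminus\beta_1$, and the $q$-constraints in \eqref{SubEqBeta1}. Using $\nu\in(Q^{\beta_2}_{GE})^\circ$ then gives $\eta_C-\nabla_x G(\xb,\yb)^Tr\in\widehat N_C(\xb)$, i.e.\ \eqref{SubEqEtaC}; its second component forces $\nabla g_i(\yb)(q-r)=0$ for $i\in\bar I^+$ and $\nabla g_i(\yb)(q-r)\le0$ for $i\in\beta_2$, which together with $\nabla g_i(\yb)q=0$, $i\in\bar I^+$, already obtained, is exactly \eqref{SubEqBeta2}; and subtracting \eqref{SubEq_q_star} from the equation coming from the first component of $\nu$ and setting $\mu^r_i:=\mu^q_i-\tilde\mu^r_i$ (where $\tilde\mu^r_i$ are the multipliers attached to $\nu$, with $\tilde\mu^r_i\ge0$ for $i\in\bar I^0\setminus\beta_2$) produces \eqref{SubEq_r} together with $\mu^q_i\ge\mu^r_i$, $i\in\bar I^0\setminus\beta_2$. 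Each step is reversible, so the intersection coincides with $\tilde N^{\beta_1,\beta_2}_{GE}$, and applying $\nabla F(\xb,\yb)^T$ via the formula above yields \eqref{EqInclN_MPGE}.

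The polar cone computations are essentially already carried out in the excerpt, so the only genuine work is the bookkeeping in the last step, and I expect the delicate points there to be twofold. First, the $C$-component must simultaneously satisfy $\eta_C\in\widehat N_C(\xb)$ and $\eta_C-\nabla_x G(\xb,\yb)^Tr\in\widehat N_C(\xb)$; since $\widehat N_C(\xb)$ is only a cone and not a subspace, these two conditions are genuinely distinct and both have to be kept (this is why the assumption that $T_C(\xb)$ is polyhedral, rather than a subspace, causes no trouble). Second, the multipliers $\mu^r_i$ appearing in \eqref{SubEq_r}--\eqref{SubEqBeta2} are a shift of the multipliers naturally carried by an element of $(Q^{\beta_2}_{GE})^\circ$, and to make the substitution $q-r$ produce the equalities $\nabla g_i(\yb)r=0$, $i\in\bar I^+$, one must explicitly invoke the $\bar I^+$-equalities obtained from the $\beta_1$-side.
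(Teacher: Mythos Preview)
Your proposal is correct and follows essentially the same route as the paper's own proof: reduce to Theorem \ref{ThQ}, then verify the identity $\tilde N^{\beta_1,\beta_2}_{GE}=(Q^{\beta_1}_{GE})^\circ\cap(\ker\nabla F(\xb,\yb)^T+(Q^{\beta_2}_{GE})^\circ)$ by decomposing an element as a kernel piece parametrized by $r$ plus a piece in $(Q^{\beta_2}_{GE})^\circ$, and then reading off the conditions \eqref{SubEq_q_star}--\eqref{SubEqEtaC} from the explicit description of $(K_\beta\times K_\beta^\ast)^\circ$; the substitution $\mu^r_i=\mu^q_i-\tilde\mu^r_i$ you make is exactly the paper's $\mu_i^r:=\mu_i^q-\mu_i^{\tilde q}$. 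The only cosmetic difference is that the paper spells out the reverse inclusion explicitly (choosing $\rho_C:=\nabla_x G(\xb,\yb)^Tr$, $r^\ast:=\nabla_y G(\xb,\yb)^Tr$ and checking the resulting difference lies in $(Q^{\beta_2}_{GE})^\circ$), whereas you dispatch it with ``each step is reversible''; your identification of the two delicate points (the double condition on $\eta_C$ and the shifted multipliers) matches precisely where the paper does the bookkeeping.
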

\begin{proof}
The statement follows immediately from Theorem \ref{ThQ} if we can show
\begin{equation}\tilde N^{\beta_1,\beta_2}_{GE}=(Q^{\beta_1}_{GE})^\circ\cap(\ker\nabla F(\xb,\yb)^T+(Q^{\beta_2}_{GE})^\circ)
\label{EqN_beta_GE}.
\end{equation}
Consider an element $(\eta_C,q^\ast,q)\in (Q^{\beta_1}_{GE})^\circ\cap(\ker\nabla F(\xb,\yb)^T+(Q^{\beta_2}_{GE})^\circ)$. Then there are elements $(\rho_C,r^\ast,r)\in \ker \nabla F(\xb,\yb)^T$ and $(\tilde\eta_C,\tilde q^\ast,\tilde q)\in (Q^{\beta_2}_{GE})^\circ$ such that
\begin{equation*}
  \left(\begin{array}{c}\eta_C\\(q^\ast,q)\end{array}\right)=\left(\begin{array}{c}\rho_C\\(r^\ast,r)\end{array}\right)+
  \left(\begin{array}{c}\tilde\eta_C\\(\tilde q^\ast,\tilde q)\end{array}\right).
\end{equation*}
Since
\[\nabla F(\xb,\yb)^T\left(\begin{array}{c}\rho_C\\r^\ast\\r\end{array}\right)=\left(\begin{array}{c}\rho_C -\nabla_xG(\xb,\yb)^Tr\\r^\ast-\nabla_yG(\xb,\yb)^Tr\end{array}\right)=\left(\begin{array}{c}0\\0\end{array}\right),\]
we obtain $\rho_C=\nabla_xG(\xb,\yb)^Tr=\eta_C-\tilde\eta_C$ and thus $\eta_C=\nabla_xG(\xb,\yb)^Tr+\tilde\eta_C\in \nabla_xG(\xb,\yb)^Tr+\widehat N_C(\xb)$ verifying \eqref{SubEqEtaC}. The relations \eqref{SubEq_q_star} and \eqref{SubEqBeta1} follow simply from the representation of $(Q^{\beta_1}_{GE})^\circ$. By using the representations  $\tilde q^\ast+\nabla^2(\bar\lambda^Tg)(\yb)\tilde q=\sum_{i\in\Ib}\mu_i^{\tilde q}\nabla g_i(\yb)$ with $\mu_i^{\tilde q}\geq 0$, $i\in\bar I^0\setminus\beta_2$, it follows that
\[ 0=r^\ast+ \tilde q^\ast-q^\ast  =\nabla_y G(\xb,\yb)^Tr+\nabla^2(\bar\lambda^Tg)(\yb)(q-\tilde q)-\sum_{i\in\Ib}(\mu_i^q-\mu_i^{\tilde q})\nabla g_i(\yb).\]
Since $r=q-\tilde q$,  $\nabla g_i(\yb)\tilde q=\nabla g_i(\yb)q=0, i\in\bar I^+$  we have
\begin{eqnarray}
\label{EqCond_r1}   &0=\nabla_y G(\xb,\yb)^Tr+\nabla^2(\bar\lambda^Tg)(\yb)r-\sum_{i\in\Ib}\mu_i^r\nabla g_i(\yb),&\\
\label{EqCond_r2}  &\nabla g_i(\yb)r=0, i\in\bar I^+,&
\end{eqnarray}
where $\mu_i^r:=\mu_i^q-\mu_i^{\tilde q}$, showing \eqref{SubEq_r}.
By taking into account $\nabla g_i(\yb)(q-r)=\nabla g_i(\yb)\tilde q\leq 0, i\in\beta_2$, $\mu_i^q-\mu_i^r=\mu_i^{\tilde q}\geq 0, i\in\bar I^0\setminus\beta_2$ we obtain together with \eqref{EqCond_r2} that \eqref{SubEqBeta2} also holds.
Hence, $(\eta_C,q^\ast,q)$ belongs to the set $\tilde N^{\beta_1,\beta_2}_{GE}$ and the inclusion $(Q^{\beta_1}_{GE})^\circ\cap(\ker\nabla F(\xb,\yb)^T+(Q^{\beta_2}_{GE})^\circ)\subset \tilde N^{\beta_1,\beta_2}_{GE}$ follows.

To show the reverse inclusion consider $(\eta_C,q^\ast,q)\in\tilde N^{\beta_1,\beta_2}_{GE}$ together with $r\in\R^m,\mu_i^q,\mu_i^r,i\in\Ib$ according to the definition. By setting $\rho_C:=\nabla_x G(\xb,\yb)^Tr$, $r^\ast:=\nabla_y G(\xb,\yb)^Tr$,
$(\tilde \eta_C,\tilde q^\ast,\tilde q):=(\eta_C,q^\ast,q)-(\rho_C,r^\ast,r)$ it follows, by using the same arguments as above, that $(\rho_C,r^\ast,r)\in\ker\nabla F(\xb,\yb)^T$ and $(\tilde \eta_C,\tilde q^\ast,\tilde q)\in (Q^{\beta_2}_{GE})^\circ$. Since we obviously have $(\eta_C,q^\ast, q)\in (Q^{\beta_1}_{GE})^\circ$,
we obtain $(\eta_C,q^\ast, q)\in(Q^{\beta_1}_{GE})^\circ\cap(\ker\nabla F(\xb,\yb)^T+(Q^{\beta_2}_{GE})^\circ)$ and this finishes the proof.
\end{proof}
\begin{theorem}\label{THGERegNormCone}Assume that the assumptions of Proposition \ref{PropGE} are fulfilled and assume that we are given a partition $(\beta_1,\beta_2)$ of $\bar I^0$ such that the following two conditions are fulfilled:

{\bf (i)} For every $j\in\beta_2$ there are $l^j\in\lin T_C(\xb)$, $\tilde \alpha^j_i$, $i\in \bar I^+$ and $z^j\in\R^m$ with
\begin{eqnarray*}
\label{EqSuffEquGE1}    &&\nabla g_j(\yb)-\sum_{i\in \bar I^+}\nabla g_i(\yb)\tilde\alpha^j_i  - \big(\nabla_y G(\xb,\yb)+\nabla^2(\bar\lambda^Tg)(\yb)\big)z^j
    +\nabla_x G(\xb,\yb)l^j=0,\\
\label{EqSuffEquGE2}    &&\nabla g_i(\yb)z^j=0, i\in\Ib.
\end{eqnarray*}

{\bf (ii)} For every $k\in\beta_1$ there are $l^k\in\lin T_C(\xb)$, $\tilde \alpha^k_i$, $i\in \bar I^+$ and $z^k\in\R^m$ with
\begin{eqnarray*}
    &&\sum_{i\in \bar I^+}\nabla g_i(\yb)\tilde\alpha^k_i+ \big(\nabla_y G(\xb,\yb)+\nabla^2(\bar\lambda^Tg)(\yb)\big)z^k-\nabla_x G(\xb,\yb)l^k=0,\\
    &&\nabla g_i(\yb)z^k=0, i\in\Ib \setminus\{k\},\  \nabla g_k(\yb)z^k=-1.
\end{eqnarray*}

Then
\[\widehat N_{\Omega_{GE}}(\xb,\yb)=M^{\beta_1,\beta_2}_{GE}=\nabla F(\xb,\yb)^T\widehat N_D(F(\xb,\yb)).\]
\end{theorem}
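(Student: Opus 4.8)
The plan is to deduce the claim from the equality criterion of Theorem \ref{ThQ}. By \eqref{EqInclRegNormalCone} one always has $\nabla F(\xb,\yb)^T\widehat N_D(F(\xb,\yb))\subset\widehat N_{\Omega_{GE}}(\xb,\yb)$, while Proposition \ref{PropGE}, applied with the given partition, gives $\widehat N_{\Omega_{GE}}(\xb,\yb)\subset M^{\beta_1,\beta_2}_{GE}=\nabla F(\xb,\yb)^T\tilde N^{\beta_1,\beta_2}_{GE}$, and by \eqref{EqN_beta_GE} the last set is $\nabla F(\xb,\yb)^T\big((Q^{\beta_1}_{GE})^\circ\cap(\ker\nabla F(\xb,\yb)^T+(Q^{\beta_2}_{GE})^\circ)\big)$. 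Hence, invoking Theorem \ref{ThQ} with $(Q_1,Q_2)=(Q^{\beta_1}_{GE},Q^{\beta_2}_{GE})$, all three sets coincide as soon as condition \eqref{EqEqualityBetaStat} is verified, i.e.
\[\nabla F(\xb,\yb)^T\tilde N^{\beta_1,\beta_2}_{GE}\subset\nabla F(\xb,\yb)^T\widehat N_D(F(\xb,\yb)).\]
So the entire argument reduces to this single inclusion, which I would prove by an explicit multiplier correction, in the spirit of the proofs of Theorem \ref{ThMPVCRegNormalCone} and Theorem \ref{ThGE_S_Stat1}.

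First I would fix $(\eta_C,q^\ast,q)\in\tilde N^{\beta_1,\beta_2}_{GE}$ and use, as in the proof of Proposition \ref{PropGE}, the splitting $(\eta_C,q^\ast,q)=(\nabla_xG(\xb,\yb)^Tr,\nabla_yG(\xb,\yb)^Tr,r)+(\tilde\eta_C,\tilde q^\ast,\tilde q)$ with $(\tilde\eta_C,\tilde q^\ast,\tilde q)\in(Q^{\beta_2}_{GE})^\circ$; concretely $\tilde\eta_C\in\widehat N_C(\xb)$, $\nabla g_i(\yb)\tilde q=0$ for $i\in\bar I^+$, $\nabla g_i(\yb)\tilde q\le0$ for $i\in\beta_2$, and $\tilde q^\ast+\nabla^2(\bar\lambda^Tg)(\yb)\tilde q=\sum_{i\in\Ib}\mu^{\tilde q}_i\nabla g_i(\yb)$ with $\mu^{\tilde q}_i\ge0$ for $i\in\beta_1$. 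Since $\nabla F(\xb,\yb)^T(\eta_C,q^\ast,q)$ is unchanged when an element of $\ker\nabla F(\xb,\yb)^T=\{(\nabla_xG(\xb,\yb)^Ts,\nabla_yG(\xb,\yb)^Ts,s)\mv s\in\R^m\}$ is added, it suffices to find a correction $s'\in\R^m$ such that the triple $(\tilde\eta_C+\nabla_xG(\xb,\yb)^Ts',\tilde q^\ast+\nabla_yG(\xb,\yb)^Ts',\tilde q+s')$ lies in $\widehat N_D(F(\xb,\yb))=\widehat N_C(\xb)\times\widehat N_{\Gr\widehat N_\Gamma}(\yb,\yba)$, the second factor being described by \eqref{EqNormalConeConstLambda}.

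The correction I would take is $s':=\sum_{k\in\beta_1}c_kz^k+\sum_{j\in\beta_2}d_jz^j$ with $c_k:=\max\{0,\nabla g_k(\yb)\tilde q\}$ and $d_j:=\max\{0,-\mu^{\tilde q}_j\}$, where $z^k$ (resp. $z^j$), together with $l^k\in\lin T_C(\xb)$ (resp. $l^j$) and the coefficients $\tilde\alpha^k_i$ (resp. $\tilde\alpha^j_i$), $i\in\bar I^+$, are the objects supplied by assumption (ii) (resp. (i)). Three things then need to be checked. (a) $\tilde q+s'\in\Kb$: since $\nabla g_i(\yb)z^j=0$ for all $i\in\Ib$ and $\nabla g_i(\yb)z^k$ equals $-1$ for $i=k$ and $0$ otherwise, the correction affects $\nabla g_i(\yb)(\tilde q+s')$ only at an index $k\in\beta_1$, where the choice of $c_k$ turns it into $\min\{0,\nabla g_k(\yb)\tilde q\}\le0$; on $\bar I^+$ it stays $0$, on $\beta_2$ it stays $\le0$. (b) $\tilde q^\ast+\nabla_yG(\xb,\yb)^Ts'+\nabla^2(\bar\lambda^Tg)(\yb)(\tilde q+s')\in\Kb^\circ$: substituting for each $k$ and $j$ the relation that $z^k$, resp. $z^j$, satisfies in (ii), resp. (i), this vector rewrites as $\sum_{i\in\bar I^0}\hat\mu_i\nabla g_i(\yb)+\sum_{i\in\bar I^+}(\mu^{\tilde q}_i-\gamma_i)\nabla g_i(\yb)+\nabla_xG(\xb,\yb)\ell$, with $\ell:=\sum_kc_kl^k+\sum_jd_jl^j\in\lin T_C(\xb)$, the $\gamma_i$ gathering the $\tilde\alpha$-coefficients, and $\hat\mu_i\ge0$ for $i\in\bar I^0$ (equal to $\mu^{\tilde q}_i$ on $\beta_1$ and to $\max\{0,\mu^{\tilde q}_i\}$ on $\beta_2$, by the choice of $d_j$); membership in $\Kb^\circ$ then follows once the stray term $\nabla_xG(\xb,\yb)\ell$ is disposed of. (c) $\tilde\eta_C+\nabla_xG(\xb,\yb)^Ts'\in\widehat N_C(\xb)$: again the $\nabla_xG$-contribution has to be controlled, using $\ell\in\lin T_C(\xb)$ and $\widehat N_C(\xb)=T_C(\xb)^\circ\perp\lin T_C(\xb)$.

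I expect the main obstacle to be precisely these two ``absorbability'' steps in (b) and (c): showing that the terms coming from $\nabla_xG(\xb,\yb)l^k$ and $\nabla_xG(\xb,\yb)l^j$ — which is exactly why (i) and (ii) only require $l^k,l^j\in\lin T_C(\xb)$ rather than $l^k=l^j=0$ — spoil neither membership in $\Kb^\circ$ nor in $\widehat N_C(\xb)$. This should be handled by the lineality-space bookkeeping already used in the proof of Theorem \ref{ThGE_S_Stat1}, relying on the facts that $\Span\{\nabla g_i(\yb)\mv i\in\bar I^+\}$ is the lineality space of $\Kb^\circ$ and that an element of $\lin T_C(\xb)$ may be added to an $\xb$-direction without leaving $T_C(\xb)$. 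A secondary, more delicate point is the coupling of the two corrections through the coefficients $c_k,d_j$: one has to make sure that repairing the defect on $\beta_1$ does not re-create one on $\beta_2$ and conversely, which is guaranteed by the disjointness relations $\nabla g_i(\yb)z^j=0$ ($i\in\Ib$) and $\nabla g_i(\yb)z^k=0$ ($i\ne k$) in (i) and (ii). Once this is in place, the corrected triple lies in $\widehat N_D(F(\xb,\yb))$ and has the same image under $\nabla F(\xb,\yb)^T$ as $(\eta_C,q^\ast,q)$, which yields \eqref{EqEqualityBetaStat} and, with it, the theorem.
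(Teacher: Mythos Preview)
Your overall reduction is right: by Proposition \ref{PropGE} and Theorem \ref{ThQ} it suffices to verify \eqref{EqEqualityBetaStat}. But the explicit kernel-correction you propose does not go through, because of a transpose mismatch between what conditions (i), (ii) supply and what adding an element of $\ker\nabla F(\xb,\yb)^T$ actually does. A kernel element has the form $(\nabla_xG(\xb,\yb)^Ts',\nabla_yG(\xb,\yb)^Ts',s')$, so in your step (b) the quantity to control is $\big(\nabla_yG(\xb,\yb)^T+\nabla^2(\bar\lambda^Tg)(\yb)\big)s'$, whereas (i) and (ii) only give formulas for $\big(\nabla_yG(\xb,\yb)+\nabla^2(\bar\lambda^Tg)(\yb)\big)z^k$ and $\big(\nabla_yG(\xb,\yb)+\nabla^2(\bar\lambda^Tg)(\yb)\big)z^j$; your substitution is invalid unless $\nabla_yG(\xb,\yb)$ happens to be symmetric. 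Even granting (b), the residual term you obtain lives in $\R^m$ and has the form $\nabla_xG(\xb,\yb)\ell$, for which there is no reason to lie in $\Kb^\circ$. In step (c) the situation is worse: conditions (i), (ii) say nothing whatsoever about $\nabla_xG(\xb,\yb)^Tz^k$ or $\nabla_xG(\xb,\yb)^Tz^j$, so $\nabla_xG(\xb,\yb)^Ts'$ is completely uncontrolled and the appeal to $\ell\in\lin T_C(\xb)$ is vacuous, since $\ell$ and $\nabla_xG(\xb,\yb)^Ts'$ are unrelated by the hypotheses.

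The paper takes a different route and proves the \emph{stronger} set inclusion $\tilde N^{\beta_1,\beta_2}_{GE}\subset\widehat N_D(F(\xb,\yb))$. One observes that this holds iff every solution of the linear/conic system behind $\tilde N^{\beta_1,\beta_2}_{GE}$ (the system the paper records as \eqref{EqLinSystGE}) satisfies $\nabla g_j(\yb)r\le0$ for $j\in\beta_2$ and $\mu_k^r\ge0$ for $k\in\beta_1$. Since $T_C(\xb)$ is polyhedral, these are bounded-LP statements over a nonempty cone, and LP duality shows that they hold precisely when the dual programs are feasible; after simplification (using \eqref{EqI_Plus} and $l\in\lin T_C(\xb)$) dual feasibility is exactly (i), respectively (ii). In other words, the data $z^j,l^j,\tilde\alpha^j$ and $z^k,l^k,\tilde\alpha^k$ are Farkas certificates, not ingredients for a kernel correction. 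If you prefer an explicit argument in the spirit you intended, pair \eqref{SubEq_r} with $z^j$ (resp.\ $z^k$): using $\nabla g_i(\yb)z^j=0$ on $\Ib$, $\nabla g_i(\yb)r=0$ on $\bar I^+$, the relation in (i) (resp.\ (ii)), and $\langle l,\nabla_xG(\xb,\yb)^Tr\rangle=0$ for $l\in\lin T_C(\xb)$ (which follows from \eqref{SubEqEtaC} and $\widehat N_C(\xb)\perp\lin T_C(\xb)$), one gets $\nabla g_j(\yb)r=0$ (resp.\ $\mu_k^r=0$). This is the correct way to exploit (i), (ii); your construction tries to use the same objects on the wrong side of the operators.
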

\begin{proof}
  In view of Theorem \ref{ThQ} and Proposition \ref{PropGE} the statement follows if we can  show $\tilde N^{\beta_1,\beta_2}_{GE}\subset\widehat N_D(F(\xb,\yb))$.
  This inclusion holds true if for every $(\eta_C,q, r)\in\R^n\times\R^m\times\R^m$, $\mu_i^q,\mu_i^r, i\in\Ib$ fulfilling the system
  \begin{equation}\label{EqLinSystGE}
  \begin{array}{l}
    \nabla g_i(\yb)(q-r)=0,\ \nabla g_i(\yb)r=0,\ i\in\bar I^+,\\
    \nabla g_i(\yb)q\leq 0,\ i\in\beta^1,\mu_i^q-\mu_i^r\geq 0,\ i\in\bar I^0\setminus \beta_2=\beta_1,\\
     \nabla g_i(\yb)(q-r)\leq 0,\ i\in\beta^2,\ \mu_i^q\geq 0,\ i\in\bar I^0\setminus \beta_1=\beta_2,\\
     (\nabla_y G(\xb,\yb)^T+\nabla^2(\bar\lambda^Tg)(\yb))r-\sum_{i\in\Ib}\mu_i^r\nabla g_i(\yb)=0,\\
     \eta_C\in\widehat N_C(\xb),\ \eta_C-\nabla_x G(\xb,\yb)^Tr\in\widehat N_C(\xb)
  \end{array}
  \end{equation}
  we have $\nabla g_i(\yb)r\leq 0$, $i\in\beta_2$ and $\mu_i^r\geq0$, $i\in\beta_1$
because  then we have $\nabla g_i(\yb)q\leq 0$, $\mu_i^q\geq 0$, $i\in\beta_1\cup\beta_2=\bar I^0$ and thus the triple $(\eta_C,q^\ast,q)\in \tilde N^{\beta_1,\beta_2}_{GE}$ with $q^\ast=-\nabla^2(\lb^Tg)(\yb)q+\sum_{i\in\Ib}\nabla g_i(\yb)\mu_i^q=-\nabla^2(\lb^Tg)(\yb)q+\sum_{i\in\Ib}\nabla g_i(\yb)\tilde\mu_i$ also belongs to $\widehat N_D( F(\xb,\yb))$.

The first condition $\nabla g_i(\yb)r\leq 0$, $i\in\beta_2$ is equivalent to the requirement that for every $j\in\beta_2$ the optimization problem
\begin{equation}\label{EqAuxLP_GE}\max_{\eta_C,q,r,\mu^q,\mu^r}\nabla q_j(\yb)r\quad\mbox{ subject to \eqref{EqLinSystGE}}\end{equation}
has a solution. Since the tangent cone $T_C(\xb)$ is assumed to be convex polyhedral, so also is the regular normal cone and therefore this program can be written as a linear program for which obviously the trivial solution is  feasible. Hence, by the duality theory of linear programming the program \eqref{EqAuxLP_GE} has a solution, if and only if its dual program has a feasible solution, i.e. there are multipliers $\alpha^j_i, \tilde \alpha^j_i$, $i\in \bar I^+$, $\gamma^j_i\geq 0$, $\tilde\gamma^j_i\geq 0$, $i\in\beta_1$,  $\delta^j_i\geq 0$, $\tilde\delta^j_i\geq 0$, $i\in\beta_2$,  $z^j\in\R^m$ and $\tilde l^j,l^j\in (\widehat N_C(\xb))^\circ=T_C(\xb)$ such that
\begin{eqnarray*}
    &&\tilde l^j+l^j=0,\\
    &&\sum_{i\in\bar I^+}\nabla g_i(\yb)\alpha^j_i+\sum_{i\in\beta_1}\nabla g_i(\yb)\gamma^j_i+\sum_{i\in\beta_2}\nabla g_i(\yb)\delta^j_i=0,\\
    &&-\nabla g_j(\yb)-\sum_{i\in \bar I^+}\nabla g_i(\yb)(\alpha^j_i-\tilde\alpha^j_i)-\sum_{i\in\beta_2}\nabla g_i(\yb)\delta^j_i\\
    && \qquad\qquad+ \big(\nabla_y G(\xb,\yb)+\nabla^2(\bar\lambda^Tg)(\yb)\big)z^j-\nabla_x G(\xb,\yb)l^j=0,\\
    &&\tilde \gamma^j_i=0, i\in \beta_1,\quad \tilde\delta^j_i=0, i\in \beta_2,\\
    &&-\nabla g_i(\yb)z^j=0, i\in\bar I^+\cup\beta_2,\ -\nabla g_i(\yb)z^j+\tilde\gamma^j_i=0, i\in\beta_1.
\end{eqnarray*}
Hence $l^j=-\tilde l^j\in T_C(\xb)\cap (-T_C(\xb))=\lin T_C(\xb)$ and by \eqref{EqI_Plus} we obtain $\gamma^j_i=0$, $i\in\beta_1$ and  $\delta^j_i=0$, $i\in\beta_2$. Now it is easy to see that the dual program to \eqref{EqAuxLP_GE} is feasible if and only if condition (i) is fulfilled.

The second requirement $\mu_i^r\geq 0$, $i\in\beta_1$ is equivalent to the condition that for every $k\in\beta_1$ the program
\begin{equation}\label{EqAuxLP2_GE}\min_{\eta_C,q,r,\mu^q,\mu^r}\mu_k^r\quad\mbox{ subject to \eqref{EqLinSystGE}}\end{equation}
has a solution. Using similar arguments as above we obtain that this is equivalent with the existence of multipliers $\tilde \alpha^k_i$, $i\in \bar I^+$, $\tilde\gamma^k_i\geq 0$, $i\in \beta_1$, $\tilde\delta^k_i\geq 0$, $i\in \beta_2$, $z^k\in\R^m$ and $l^k\in\lin T_C(\xb)$ verifying
\begin{eqnarray*}
    &&\sum_{i\in \bar I^+}\nabla g_i(\yb)\tilde\alpha^k_i+ \big(\nabla_y G(\xb,\yb)+\nabla^2(\bar\lambda^Tg)(\yb)\big)z^k-\nabla_x G(\xb,\yb)l^k=0,\\
    &&-\tilde \gamma^k_i=0, i\in \beta_1,\quad -\tilde\delta^k_i=0, i\in \beta_2,\\
    &&-\nabla g_i(\yb)z^k=0, i\in\bar I^{+}\cup\beta_2,\ -\nabla g_i(\yb)z^k+\tilde\gamma^k_i=0, i\in\beta_1\setminus\{k\},\  -1-\nabla g_k(\yb)z^k+\tilde\gamma^k_k=0
\end{eqnarray*}
and it is easy to see that this is equivalent to condition (ii).
\end{proof}
In order to introduce a suitable $\Q$-stationarity concept for generalized equations, let us define
\[
{\cal B}_{GE}:=\left\{\beta\subset \bar I^0\mv \exists z\in\R^m:\ \nabla g_i(\yb)z
\begin{cases}
  =0,&i\in\bar I^+\cup\beta\\<0,&i\in\bar I^0\setminus\beta
\end{cases}
\right\}
\]
and
\[\Q_{GE}:=\{(Q^{\beta_1}_{GE},\Q^{\beta_2}_{GE})\mv (\beta_1,\beta_2)\in{\cal B}_{GE}\times{\cal B}_{GE}, \beta_1\cup\beta_2=\bar I^0\}.\]
Note that if a subset $\beta\subset\bar I^0$ does not belong to ${\cal B}_{GE}$, then the set $\bar\beta:=\{i\in\bar I^0\mv \nabla g_i(\yb)z=0 \forall z\in K_\beta\}$ fulfills $\beta\subset\bar\beta\in {\cal B}_{GE}$ and $K_\beta=K_{\bar\beta}$. It follows that $K_\beta^\ast\subset K_{\bar\beta}^\ast$ and consequently $Q^{\beta}_{GE}\subset Q^{\bar\beta}_{GE}$. Since we want to consider closed convex cones $Q$ which are as large as possible, we can discard $Q^{\beta}_{GE}$ from our analysis.

It follows immediately from the definition that $\bar I^0\in {\cal B}_{GE}$. Further, by \cite[Lemma 2]{GfrOut14a} we have $\emptyset\in {\cal B}_{GE}$.

In contrast to MPCC and MPVC the condition $Q_1^\circ\cap Q_2^\circ=\widehat N_D(F(\xb,\yb))$ does not hold automatically for every pair $(Q_1,Q_2)\in\Q_{GE}$, but it holds for instance for the pair $(Q^{\bar I^0}_{GE},Q^{\emptyset}_{GE})$.
\begin{definition}Let $(\xb,\yb)\in\Omega_{GE}$.
\begin{enumerate}
\item   We say that $(\xb,\yb)$ is {\em $\Q$-stationary for the program \eqref{EqGE} with respect to the pair $(\beta_1,\beta_2)\in {\cal B}_{GE}\times{\cal B}_{GE}$ satisfying $\beta_1\cup\beta_2=\bar I^0$} if
  \[0\in \nabla f(\xb,\yb)+M^{\beta_1,\beta_2}_{GE},\]
  where $M^{\beta_1,\beta_2}_{GE}$ is given by \eqref{EqInclN_MPGE}.
\item  We say that $(\xb,\yb)$ is {\em $\Q$-stationary for the program \eqref{EqGE}} if it is $\Q$-stationary with respect to some  pair $(\beta_1,\beta_2)\in{\cal B}_{GE}\times{\cal B}_{GE}$ with $\beta_1\cup\beta_2=\bar I^0$.
\item We say that $(\xb,\yb)$ is {\em $\Q_M$-stationary for the program \eqref{EqGE}} if there is some pair $(\beta_1,\beta_2)\in {\cal B}_{GE}\times{\cal B}_{GE}$ with $\beta_1\cup\beta_2=\bar I^0$ such that
\[0\in\nabla f(\xb,\yb)+\nabla F(\xb,\yb)^T\left((Q^{\beta_1}_{GE})^\circ\cap\big(\ker\nabla F(\xb,\yb)^T+ (Q^{\beta_2}_{GE})^\circ\big)\cap N_D(F(\xb,\yb))\right).\]
\end{enumerate}
\end{definition}

By using Proposition \ref{PropGE}, Theorem \ref{THGERegNormCone} and Theorem \ref{ThM_betaStat}(3.) we obtain the following Theorem.
\begin{theorem}
Assume that the assumptions of Proposition \ref{PropGE} hold at the B-stationary point $(\xb,\yb)\in\Omega_{GE}$. Then $(\xb,\yb)$ is $\Q$-stationary with respect to every pair $(\beta_1,\beta_2)\in {\cal B}_{GE}\times{\cal B}_{GE}$ with $\beta_1\cup\beta_2=\bar I^0$ and $(\xb,\yb)$ is also $\Q_M$-stationary. Conversely, if $(\xb,\yb)$ is $\Q$-stationary with respect to some pair $(\beta_1,\beta_2)\in {\cal B}_{GE}\times{\cal B}_{GE}$ fulfilling the assumptions of Theorem \ref{THGERegNormCone}, then $(\xb,\yb)$ is S-stationary and consequently B-stationary as well.
\end{theorem}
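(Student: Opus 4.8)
The plan is to assemble the claimed theorem from three pieces already available in the paper. First, for the forward direction I would invoke Proposition~\ref{PropGE}, which under the standing assumptions gives the inclusion $\widehat N_{\Omega_{GE}}(\xb,\yb)\subset M^{\beta_1,\beta_2}_{GE}$ for \emph{every} pair $(\beta_1,\beta_2)\subset\bar I^0\times\bar I^0$, together with the description $M^{\beta_1,\beta_2}_{GE}=\nabla F(\xb,\yb)^T\big((Q^{\beta_1}_{GE})^\circ\cap(\ker\nabla F(\xb,\yb)^T+(Q^{\beta_2}_{GE})^\circ)\big)$ coming from \eqref{EqN_beta_GE}. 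Since the cones $Q^\beta_{GE}$ are convex polyhedral and satisfy $Q^\beta_{GE}\subset\TD$ with $\big(\nabla F(\xb,\yb)^{-1}Q^\beta_{GE}\big)^\circ=\nabla F(\xb,\yb)^T(Q^\beta_{GE})^\circ$, the collection $\Q_{GE}$ indeed consists of pairs satisfying \eqref{EqCondQi}, so $\Q$-stationarity in the sense of Definition~\ref{DefQStat} is well-defined for $\Q=\Q_{GE}$. Applying Corollary~\ref{CorBetaStat} with this collection yields at once that a B-stationary point is $\Q$-stationary with respect to every admissible pair.

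Second, for $\Q_M$-stationarity I would appeal to Theorem~\ref{ThM_betaStat}(3.). Its hypotheses are that $\TD$ is a finite union of convex polyhedral sets and that for every $t\in\TD$ there is a pair $(Q_1,Q_2)\in\Q$ with $t\in Q_1$. The first is exactly \eqref{EqTan_GE_Union_Q}, which expresses $T_D(F(\xb,\yb))=\bigcup_{\beta\subset\bar I^0}Q^\beta_{GE}$; the second I would check by the reduction remark preceding the definition of $\Q_{GE}$: given $t\in Q^\beta_{GE}$ for some $\beta\subset\bar I^0$, replacing $\beta$ by $\bar\beta=\{i\in\bar I^0\mv\nabla g_i(\yb)z=0\ \forall z\in K_\beta\}\in{\cal B}_{GE}$ keeps $K_\beta=K_{\bar\beta}$, hence $Q^\beta_{GE}\subset Q^{\bar\beta}_{GE}$ so $t\in Q^{\bar\beta}_{GE}$, and the pair $(Q^{\bar\beta}_{GE},Q^{\bar\beta'}_{GE})$ with $\bar\beta'$ chosen so that $\bar\beta\cup\bar\beta'=\bar I^0$ (e.g.\ $\bar\beta'=\bar I^0$, which lies in ${\cal B}_{GE}$) belongs to $\Q_{GE}$. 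Thus Theorem~\ref{ThM_betaStat}(3.) applies and $(\xb,\yb)$ is $\Q_M$-stationary.

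Third, for the converse I would use Corollary~\ref{CorBetaStat} again, now in its second half: if $(\xb,\yb)$ is $\Q$-stationary with respect to a pair satisfying condition \eqref{EqEqualityBetaStat}, then it is S-stationary, hence B-stationary. What remains is to verify that the pair $(\beta_1,\beta_2)$ satisfying the hypotheses (i)--(ii) of Theorem~\ref{THGERegNormCone} does satisfy \eqref{EqEqualityBetaStat}; but Theorem~\ref{THGERegNormCone} proves precisely $\tilde N^{\beta_1,\beta_2}_{GE}\subset\widehat N_D(F(\xb,\yb))$, which upon applying $\nabla F(\xb,\yb)^T$ gives $M^{\beta_1,\beta_2}_{GE}\subset\nabla F(\xb,\yb)^T\NrD$, i.e.\ \eqref{EqEqualityBetaStat} holds for that pair. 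So $\Q$-stationarity with respect to such a pair forces $-\nabla f(\xb,\yb)\in\nabla F(\xb,\yb)^T\NrD$, which is S-stationarity.

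The only genuine obstacle is bookkeeping: one must make sure that the pairs used in the two directions actually lie in $\Q_{GE}$, i.e.\ that the index sets are members of ${\cal B}_{GE}$ and that their union is $\bar I^0$ — the partition $(\beta_1,\beta_2)$ in Theorem~\ref{THGERegNormCone} is a genuine partition so its union is $\bar I^0$ by construction, and one needs $\beta_1,\beta_2\in{\cal B}_{GE}$, which should be imposed or verified (again via the $\bar\beta$-closure, noting $K_\beta$ and hence $Q^\beta_{GE}$ are unchanged). With that verification in place, the theorem is an immediate corollary of Proposition~\ref{PropGE}, Theorem~\ref{THGERegNormCone}, Corollary~\ref{CorBetaStat} and Theorem~\ref{ThM_betaStat}(3.), exactly as the sentence preceding the statement announces. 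I would therefore write the proof in three short paragraphs mirroring the three assertions, citing these results and performing only the polyhedrality/membership checks explicitly.
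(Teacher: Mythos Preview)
Your proposal is correct and follows essentially the same approach as the paper, which simply states that the theorem is obtained ``by using Proposition~\ref{PropGE}, Theorem~\ref{THGERegNormCone} and Theorem~\ref{ThM_betaStat}(3.)''. Your three paragraphs flesh out exactly these citations, and the extra bookkeeping you supply (the $\bar\beta$-closure argument to land in ${\cal B}_{GE}$ and the choice $\bar\beta'=\bar I^0$ for the second component) is the natural way to verify the hypothesis of Theorem~\ref{ThM_betaStat}(3.) that the paper leaves implicit; note also that in the converse direction membership $(\beta_1,\beta_2)\in{\cal B}_{GE}\times{\cal B}_{GE}$ is already part of the hypothesis, so no closure argument is needed there.
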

\section*{Acknowledgements}
The research  was supported by the Austrian Science Fund (FWF) under grant P26132-N25.
The authors would like to express their gratitude to the reviewers for their careful reading and numerous important suggestions.


\begin{thebibliography}{99}
%
\bibitem{AchKa08}{\sc W. Achtziger,  C. Kanzow}, {\em Mathematical programs with vanishing constraints:
Optimality conditions and constraint qualifications}, Math. Program.,
114 (2008), pp.~69--99.
%
\bibitem{BonSh00}{\sc J.~F. Bonnans, A. Shapiro}, {\em
Perturbation analysis of optimization problems}, Springer, New
York, 2000.
%
\bibitem{DoShSt12}{\sc D. Dorsch, V. Shikhman, O. Stein}, {\em Mathematical programs with vanishing constraints:
critical point theory}, J. Glob. Optim., 52 (2012), pp.~591--605.
%
\bibitem{FleKan05}{\sc M.~L. Flegel, C. Kanzow}, {\em An Abadie-type constraint qualification for
mathematical programs with equilibrium constraints}, J. Optim. Theory Appl., 124 (2005), pp.~595--614.
%
\bibitem{FleKan06}{\sc M.~L. Flegel, C. Kanzow}, {\em A direct proof for M-stationarity under MPEC-GCQ for mathematical programs with equilibrium constraints,} in Optimization with Multivalued Mappings: Theory, Applications and Algorithms, S. Dempe and V. Kalashnikov, eds., Springer, New York, NY, 2006, pp. 111--122.
\bibitem{FleKanOut07}{\sc M.~L. Flegel, C. Kanzow, J.~V. Outrata}, {\em Optimality conditions for disjunctive programs
with application to mathematical programs with equilibrium constraints}, Set-Valued Anal., 15 (2007), pp.~139--162.
%
\bibitem{Gfr13b}{\sc H. Gfrerer}, {\em On directional metric subregularity and second-order optimality conditions for a class of nonsmooth mathematical programs},
SIAM J. Optim.,  23 (2013), pp.~632--665.
%
\bibitem{Gfr14a}{\sc H. Gfrerer}, {\em Optimality conditions for disjunctive programs based on generalized differentiation with application to
mathematical programs with equilibrium constraints}, SIAM J. Optim., 24 (2014), pp.\ 898--931.
%
\bibitem{GfrKl16}{\sc H. Gfrerer, D. Klatte}, {\em Lipschitz and H\"older stability of optimization problems
and generalized equations}, Math. Program. Series A,  158 (2016), pp.~35--75.
%
%
\bibitem{GfrOut14a}{\sc H. Gfrerer, J.~V. Outrata}, {\em  On computation of generalized derivatives of the normal-cone mapping and their applications},   Math. Oper. Res. 41 (2016), pp.~1535--1556..
%
\bibitem{GfrMo15a}{\sc H. Gfrerer, B.~S. Mordukhovich},{\em Complete characterizations of tilt stability in nonlinear programming under weakest qualification conditions}, SIAM J. Optim., 25 (2015), pp.~2081--2119.
%
\bibitem{GuLiYe12}{\sc L. Guo,G.~H. Lin, J.~J. Ye}, {\em Stability analysis for parametric mathematical programs with
geometric constraints and its applications}, SIAM J. Optim.,  22
(2012), pp.~1151--1176.
\bibitem{HenKruOut13}{\sc R. Henrion, A.~Y. Kruger, J.~V.
Outrata}, {\em Some remarks on stability of generalized equations},
J. Optim. Theory Appl., 159 (2013), pp.~681--697.
%
%
\bibitem{HenOut05}{\sc R. Henrion, J.~V. Outrata},  {\em Calmness of constraint systems with
applications}, Math. Program., Ser. B, 104 (2005), pp.~437--464.
%
%
\bibitem{HenJouOut02}{\sc R. Henrion, A. Jourani, J.~V. Outrata}, {\em On the calmness of a class of
multifunctions}, SIAM J. Optim., 13 (2002), pp.~603--618.
%
\bibitem{HenOutSur12}{\sc R. Henrion, J.~V. Outrata, T. Surowiec},
{\em On regular coderivatives in parametric equilibria with
non-unique multipliers}, Math. Program., Ser. B, 136 (2012),
pp.~111--131.
%
\bibitem{Ho09}{\sc T. Hoheisel}, {\em Mathematical programs with vanishing constraints}, PhD-thesis, Julius--Maximilians--Universit\"at W\"urzburg, 2009.
%
\bibitem{HoKa08}{\sc T. Hoheisel, C. Kanzow} {\em Stationary conditions for mathematical programs
with vanishing constraints using weak constraint qualifications}, J. Math.
Anal. Appl. 337 (2008), pp.~292--310.
%
\bibitem{HoKa09}{\sc T. Hoheisel, C. Kanzow}, {\em  On the Abadie and Guignard constraint qualification
for mathematical progams with vanishing constraints}, Optimization 58 (2009), pp.~431--448.
%
\bibitem{HoKaOut10}{\sc T. Hoheisel, C. Kanzow, J.~V. Outrata}, {\em Exact penalty results for mathematical
programs with vanishing constraints}, Nonlinear Anal., 72 (2010), pp.~ 2514--2526.
%
\bibitem{IzSo09}{\sc A.~F. Izmailov, M.~V. Solodov}, {\em Mathematical programs with vanishing constraints:
optimality conditions, sensitivity and a relaxation method}, J. Optim. Theory  Appl., 142 (2009), pp.~501--532.
%
%
\bibitem{Mo06a}{\sc B.~S. Mordukhovich}, {\em Variational analysis
and generalized differentiation, I: Basic theory}, Springer,
Berlin, Heidelberg, 2006.
%
\bibitem{Mo06b}{\sc B.~S. Mordukhovich}, {\em Variational analysis
and generalized differentiation, II: Applications}, Springer,
Berlin, Heidelberg, 2006.
%
\bibitem{PaFu99}{\sc J.~S. Pang,  M. Fukushima}, {\em Complementarity constraint qualifications and simplified
B-stationary conditions for mathematical programs with equilibrium constraints},
Comput. Optim. Appl., 13 (1999), pp.~111??-136.
%
%
\bibitem{Rob81}{\sc S.~M. Robinson}, {\em Some continuity
properties of polyhedral multifunctions}, Math. Program. Studies,
14 (1981), pp.~206--214.
%
%
\bibitem{Ro70}{\sc R.~T. Rockafellar}, {\em
Convex analysis}, Princeton, New jersey, 1970.
%
\bibitem{RoWe98}{\sc R.~T. Rockafellar, R.~J-B. Wets}, {\em
Variational analysis}, Springer, Berlin, 1998.
%
\bibitem{Sch04}{\sc S. Scholtes}, {\em Nonconvex structures in nonlinear programming}, Oper. Res., 52 (2004), pp.~368--383.
%
\bibitem{Ye99}{\sc J.~J. Ye}, {\em Optimality conditions for optimization problems with complementarity constraints}, SIAM J. Optim., 9 (1999), pp.~374-387.
\end{thebibliography}
\end{document}